\documentclass{mod}
\usepackage{url}
\usepackage{setspace}
\usepackage{scrextend}
\usepackage{cite}
\usepackage[shortlabels]{enumitem}
\usepackage{hyperref}
\usepackage{caption}
\usepackage{subcaption}

\usepackage{amsmath}
\usepackage{amsthm}
\usepackage{amssymb}
\DeclareMathAlphabet{\mathbbm}{U}{bbm}{m}{n}

\usepackage{mathtools}
\usepackage{mathrsfs}

\usepackage[all]{xy}
\usepackage[toc,page]{appendix}
\usepackage{titlesec}
\usepackage{upgreek}


\usepackage{tocloft}

\usepackage{tocbasic}
\DeclareTOCStyleEntry[
beforeskip=.2em plus 1pt,
entryformat=\normalfont ,
pagenumberformat=\bfseries
]{tocline}{section}

\usepackage{ stmaryrd }

\titleformat{\paragraph}[runin]{\small\sffamily\bfseries
}{}{}{}[]


\newcounter{mainthm}
\newtheorem{mainthm}[mainthm]{Theorem}

\newcounter{mainconj}
\newtheorem{mainconj}[mainconj]{Conjecture}

\newtheorem{thm}{Theorem}[section]

\theoremstyle{plain}
\newtheorem{lem}[thm]{Lemma}
\newtheorem{prop}[thm]{Proposition}

\newtheorem{defn-thm}[thm]{Definition-Theorem}

\newtheorem{defn-lem}[thm]{Definition-Lemma}

\theoremstyle{definition}
\newtheorem{defn}[thm]{Definition}
\newtheorem{convention}
[thm]{Convention}

\newtheoremstyle{rmk}
{5pt}
{5pt}
{}
{}
{\itshape}
{}
{.5em}
{}

\theoremstyle{rmk}
\newtheorem{rmk}[thm]{Remark}

\newtheoremstyle{note}
{8pt}
{5pt}
{\itshape}
{10pt}
{\bfseries}
{}
{.5em}
{}

\theoremstyle{note}

\setlist[description]{font=
	\normalfont
	\itshape
	\space}

\setcounter{tocdepth}{1}



\newcommand{\UD}{\mathscr{UD}}

\newcommand{\ev}{\mathrm{ev}}

\newcommand{\f}{\mathfrak f}


\DeclareMathOperator{\dist}{dist}

\DeclareMathOperator{\trop}{\mathfrak{trop}}
\DeclareMathOperator{\Hom}{Hom}

\DeclareMathOperator{\val}{\mathsf{v}}

%
%

\titleformat{\subsection}[runin]{
	\bfseries\itshape\normalsize}{\thesubsection \ }{0em}{}[\mbox{ . } ]
\titlespacing{\subsection}
{0pt}
{2.25ex plus 1ex minus .2ex}
{0pt}

\titleformat{\subsubsection}[runin]{
	\itshape\normalsize}{\thesubsubsection \ }{0em}{}[\mbox{} ]
\titlespacing{\subsubsection}
{0pt}
{2.25ex plus 1ex minus .2ex}
{0pt}


\begin{document}
	\setlength{\parindent}{15pt}	\setlength{\parskip}{0em}
	
	\title{Family Floer mirror space for local SYZ singularities}
	\author[Hang Yuan]{Hang Yuan}

	\begin{abstract} {\sc Abstract:}  
		We give a mathematically precise statement of the SYZ conjecture between mirror space pairs and prove it for any toric Calabi-Yau manifold with the Gross Lagrangian fibration.
		To date, it is the first time we realize the SYZ proposal with singular fibers beyond the topological level.
		The dual singular fibration is explicitly written and proved to be compatible with the family Floer mirror construction. 
Moreover, we discover that the Maurer-Cartan set of a singular Lagrangian is only a strict subset of the corresponding dual singular fiber. This responds negatively to the previous expectation and leads to new perspectives of SYZ singularities. As extra evidence, we also check some computations for a well-known folklore conjecture for the Landau-Ginzburg model.
		\end{abstract}

	\maketitle

{\hypersetup{linkcolor=black}
	\textrm{\tableofcontents}
}

%
%

\vspace{-0.4em}

\section{Introduction}

Mirror symmetry is a mysterious relationship, discovered by string physicists, between pairs of Calabi-Yau manifolds $X, X^\vee$. The mathematical interest in mirror symmetry began since the enumerative prediction of Candelas et al \cite{MirrorSymmetryOrigin}.
Nowadays, the two major approaches to the mathematical mirror symmetry are the Kontsevich's Homological Mirror Symmetry (HMS) \cite{KonICM} and the Strominger-Yau-Zaslow conjecture (SYZ) \cite{SYZ}.
These two ideas focus on different aspects of mirror symmetry beyond enumeration problems.
The SYZ conjecture explains why a pair $X,X^\vee$ should be mirror to each other geometrically based on the physical idea of the T-duality; meanwhile, the HMS conjecture predicts a categorical equivalence between the Fukaya category of $X$ {(A side)} and the derived category of coherent sheaves of $X^\vee$ {(B side)}. It is expected to be the underlying principle behind the enumerative prediction.

The work of Joyce \cite{Joyce_Singularity} implies the strong form of the SYZ conjecture cannot hold yet \cite[p191]{Joyce_book}, see also \cite{MSClayII}. This is because there are serious issues to match singular loci. Another issue for the SYZ idea is that what we mean by `dual tori' is unclear. If we want the T-duality to be useful in constructing mirrors, Gross's topological mirror symmetry \cite{Gross_topo_MS} tells us that the SYZ conjecture may be somewhat topological.

Following Fukaya's family Floer program \cite{FuFamily, FuAbelian} and Kontsevich-Soibelman's non-archimedean mirror symmetry proposal \cite{KSTorus, KSAffine}, we propose a modified mathematically precise SYZ statement with an emphasis on both the aspects of symplectic topology and non-archimedean analytic topology:

\begin{mainconj}
	\label{conjecture_NA_SYZ}
	Given any Calabi-Yau manifold $X$,
	\begin{enumerate}
		\item[(a)] there exists a Lagrangian fibration $\pi: X \to B$ onto a topological manifold $B$ such that the $\pi$-fibers are graded with respect to a holomorphic volume form $\Omega$;
		\item[(b)] there exists a tropically continuous surjection $f:  \mathscr Y \to B$ 
		from an analytic space $\mathscr Y$ over the Novikov field $\Lambda=\mathbb C((T^{\mathbb R}))$ onto the same base $B$;
	\end{enumerate}
\vspace{0.3em}
satisfying the following 

\begin{itemize}
	\item[\textbf{(i)}] $\pi$ and $f$ have the same singular locus skeleton $\Delta$ in $B$;
	\item[\textbf{(ii)}]  $\pi_0=\pi|_{B_0}$ and $f_0=f|_{B_0}$ induce the same integral affine structures on $B_0=B\setminus \Delta$.
\end{itemize}
\end{mainconj}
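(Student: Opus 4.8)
The plan is to verify the conjecture when $X$ is a toric Calabi--Yau manifold carrying Gross's special Lagrangian fibration $\pi:X\to B$. This fibration supplies part~(a) directly: its Lagrangian torus fibers are special, hence graded, with respect to the standard toric holomorphic volume form $\Omega$, and its discriminant is an amoeba-type codimension-two locus $\Delta\subset B$. For part~(b) I would construct $\mathscr Y$ in two stages. Over the smooth locus $B_0=B\setminus\Delta$, Fukaya's family Floer machinery produces a non-archimedean analytic space $\mathscr Y_0\to B_0$ over $\Lambda$ whose fiber over $b$ is the dual torus $H^1(\pi^{-1}(b);U_\Lambda)$ and whose Floer charts are glued by the wall-crossing transformations determined by counts of Maslov-index-zero holomorphic disks; this is the non-archimedean $T$-dual of $\pi_0$, and already at this stage the valuation $\val$ applied fiberwise recovers an integral affine structure on $B_0$.

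The core of the argument is the extension of $\mathscr Y_0$ across $\Delta$. Here I would follow the Kontsevich--Soibelman reconstruction of a non-archimedean space from affine-and-wall-crossing data, reorganizing the walls emanating from $\Delta$ into a scattering diagram in the sense of Gross--Hacking--Keel--Siebert, and then defining the fibers of $f$ over $\Delta$ by the canonical theta-function coordinate ring of that diagram. The points to be checked are: (1) the scattering diagram assembled from the toric-CY open Gromov--Witten invariants is consistent, so the gluing is well defined and the theta functions converge over $\Lambda$; (2) the resulting $f:\mathscr Y\to B$ is surjective and tropically continuous, with $\trop$ realizing the common combinatorial shadow of $\pi$ and $f$; and (3) the singular skeleton of $f$ equals $\Delta$, giving~(i). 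A subtlety highlighted in the introduction is that, over a singular value, the Maurer--Cartan set of the corresponding singular Lagrangian is a \emph{proper} subset of the fiber of $f$; so (3) cannot be obtained by identifying Maurer--Cartan spaces with dual fibers, and one must instead pin $\Delta$ down through the explicit local SYZ-singularity models together with the affine data of~(ii).

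Part~(ii) then reduces to a local comparison near each point of $B_0$: on the symplectic side, McLean's action coordinates --- equivalently, symplectic parallel transport on $H^1$ of the fibers --- give one integral affine atlas, while on the analytic side the valuations of the family-Floer coordinates give another, and both are expressed through the same period integrals, hence agree up to the integral affine automorphisms built into the Floer gluing. The step I expect to be the main obstacle is~(1)--(2): showing that the family-Floer wall-crossing functions genuinely assemble into a consistent scattering diagram over $\Lambda$ and that the reconstructed $\mathscr Y$ is a separated analytic space with the stated tropicalization. This is precisely where the disk-counting output of Floer theory has to be reconciled with the Kontsevich--Soibelman gluing formalism and the Gross--Hacking--Keel--Siebert canonical-coordinate technology, and where the local models of SYZ singularities carry the weight of the proof.
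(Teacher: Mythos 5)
Your outline agrees with the paper up to the smooth locus: Gross's fibration gives (a), and over $B_0$ the family Floer construction gives the dual affinoid torus fibration $\pi_0^\vee$ whose tropicalization recovers the integral affine structure. But the heart of the statement is the extension across $\Delta$ together with the matching conditions (i)--(ii), and there your plan diverges from what is actually done and leaves a genuine gap. The paper does \emph{not} assemble a scattering diagram or define fibers over $\Delta$ by a theta-function coordinate ring. Instead it exhibits the mirror as the explicit algebraic variety $Y=\{x_0x_1=h(y_1,\dots,y_{n-1})\}$, embeds the singular affine base by an explicit map $j:B\to\mathbb R^{n+1}$ built from the flux function $\psi$ (the symplectic area of disks, Lemma \ref{symplectic_area_increasing_lem}), writes down the explicit tropically continuous map $F$ on $Y^{\mathrm{an}}$ in which the \emph{same} function $\psi_0$ appears, checks $j(B)=F(\mathscr Y)$ by a broken-line analysis, and sets $f=j^{-1}\circ F$; conditions (i) and (ii) are then verified by direct computation, and (iii) by the explicit gluing $X_0^\vee\cong T_+\sqcup T_-/\sim$ and the embedding $g$ with $F\circ g=j\circ\pi_0^\vee$ (Theorem \ref{fibration_preserving_thm}). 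In this geometry there is only one wall $q_n=0$, and the gluing maps $\Phi_\ell$ are pinned down by the $T^{n-1}$-symmetry and the superpotential-matching property, so no scattering consistency enters at all; proposing a GHKS-type reconstruction is therefore not a shortcut but an unexecuted (and much harder) program, and you give no argument that it would converge over $\Lambda$, produce a separated analytic space, or have singular skeleton exactly $\Delta$.

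The second concrete problem is your claim that (ii) ``reduces to a local comparison \dots both are expressed through the same period integrals, hence agree.'' The paper stresses (\S\ref{ss_literature}, Remark \ref{strategry_rmk}) that this is precisely the delicate point: the integral affine structure induced by $\pi$ depends on the K\"ahler form through $\psi$, which is only piecewise smooth and not piecewise linear, and deforming $\psi$ changes the affine structure even when the monodromy is unchanged. Any purely combinatorial or scattering-theoretic model on the B-side has no mechanism for seeing this $\omega$-dependence unless the function $\psi_0$ is fed into the definition of the non-archimedean fibration, which is exactly what the explicit formula for $F$ does. Without that ingredient, your steps (2)--(3) and the affine-structure matching cannot be completed as stated; the appeal to the Maurer--Cartan discrepancy correctly warns against one wrong route but does not supply the missing construction of the dual singular fibers, which in the paper come from the tropical continuity of $F$ over the corner points of the broken lines, not from Floer data.
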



\begin{mainthm}
	\label{Main_thm_oversimplify_this_paper}
	Conjecture \ref{conjecture_NA_SYZ} holds for the Gross special Lagrangian fibration $\pi$ (with singularities) in any toric Calabi-Yau manifold. Moreover, the analytic space $\mathscr Y$ embeds into an algebraic variety.
\end{mainthm}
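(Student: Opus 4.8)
The plan is to build the dual fibration $f\colon \mathscr Y\to B$ in two stages — first over the smooth part $B_0$ by Fukaya's family Floer machinery, then extended across the discriminant — and then to recognize $\mathscr Y$ as the analytification of an explicit algebraic variety, from which conditions (i)--(ii) and the last sentence all fall out. Part (a) of Conjecture \ref{conjecture_NA_SYZ} is essentially given for free: Gross's special Lagrangian fibration $\pi$ exists on every toric Calabi--Yau $X$, and gradedness of its fibers with respect to the toric holomorphic volume form $\Omega$ is a short direct verification using that the fibers are special Lagrangian.

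\textbf{Stage 1: family Floer over $B_0$.} Over $B_0=B\setminus\Delta$ the map $\pi_0$ is a smooth Lagrangian $T^n$-bundle. Following Fukaya's program one assigns to each fiber $L_b$ the local chart $\Hom(\pi_1(L_b),\Lambda^\ast)\cong(\Lambda^\ast)^n$ of rank-one unitary local systems and glues these charts over $B_0$ by the family-Floer transition maps, whose only nontrivial contributions come from Maslov-index-$0$ holomorphic disks. For Gross's fibration these disks are supported along a single wall, and the induced wall-crossing is of the standard ``cluster'' type, so the gluing is explicit; this produces a non-archimedean analytic space $\mathscr Y_0$ together with a tropicalization map $f_0\colon\mathscr Y_0\to B_0$. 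Because the integral affine structure read off from a family-Floer mirror is tautologically the one carried by the Lagrangian fibration, $f_0$ and $\pi_0$ induce the same affine structure on $B_0$ — that is (ii). It remains to record Novikov convergence of the transition maps (a priori disk-area bounds, as in Fukaya's work and in the constructions set up earlier in the paper) and tropical continuity of $f_0$, which is immediate from the explicit form of the transition maps.

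\textbf{Stage 2: extension across $\Delta$ and algebraization.} Next I would write down the candidate total space directly: the instanton-corrected Hori--Vafa/Gross--Siebert mirror of $X$, a hypersurface of the form $\{uv=\sum_i c_i\, z^{m_i}\}$ with monomials indexed by the rays of the fan of $X$ and coefficients $c_i\in\Lambda$ assembled from the Novikov parameter and open Gromov--Witten invariants, equipped with its evident tropicalization $f$ to $B$. Using the Kontsevich--Soibelman reconstruction of an analytic space from an affine base with singularities together with wall-crossing automorphisms, and the Gross--Hacking--Keel--Siebert canonical (theta-function) coordinates, one checks that the analytification of this hypersurface, restricted over $B_0$, is canonically identified with the $\mathscr Y_0$ of Stage 1 — the key point being that the family-Floer wall-crossing transformation matches the explicit coordinate change on $\{uv=\cdots\}$, which reduces to the open Gromov--Witten computations of the earlier sections (and of Chan--Lau--Leung--Tseng and Lau--Leung--Wu in the toric Calabi--Yau setting). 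Setting $\mathscr Y$ equal to this hypersurface then produces $f\colon\mathscr Y\to B$ extending $f_0$, realizes $\mathscr Y$ inside an algebraic variety (the ``moreover''), and lets one read off from the equation itself that the codimension-two locus over which $f$ fails to be a smooth affine $T^n$-fibration is precisely the amoeba of $\sum_i c_i z^{m_i}$ — which coincides with Gross's discriminant $\Delta$ — giving (i).

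\textbf{Main difficulty.} The crux is the matching in Stage 2 of the family-Floer gluing with the explicit algebraic coordinates near $\Delta$: one must control the a priori infinite wall-crossing series and prove they sum to the claimed Laurent expressions, and one must pin down the relevant open Gromov--Witten invariants. A conceptual subtlety — and one of the paper's advertised discoveries — is that the fiber of $f$ over a point of $\Delta$ cannot be obtained as the Maurer--Cartan moduli space of the corresponding singular Lagrangian: that moduli space is only a strict subset, and the correct fiber arises by analytic continuation inside the algebraic mirror. Reconciling the family-Floer description and the algebraic description of $\mathscr Y$ exactly along $\Delta$, in the presence of this mismatch, is the delicate part of the argument.
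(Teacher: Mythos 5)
Your overall skeleton is the paper's: family Floer charts over the Clifford and Chekanov chambers glued by a cluster-type transformation, a Gross--Hacking--Keel-style embedding into the hypersurface $\{x_0x_1=h(y)\}$, a Kontsevich--Soibelman-style singular model for the dual fibration, and algebraicity of the ambient variety. But there is a genuine gap exactly where you wave at ``its evident tropicalization $f$ to $B$'' and say that (i)--(ii) ``fall out.'' The hypersurface carries no evident fibration over $B$: the map the paper actually uses is $f=j^{-1}\circ F$, where \emph{both} the embedding $j:B\hookrightarrow\mathbb R^{n+1}$ and the tropically continuous map $F$ on $Y^{\mathrm{an}}$ are manufactured from the symplectic data --- the flux/area function $\psi$ of (\ref{psi_global_in_B_eq}), its wall value $\psi_0$, and $h_{\mathrm{trop}}$ --- precisely because the integral affine structure induced by $\pi$ depends on $\omega$ through the action coordinates (deforming $\psi$ changes it even with unchanged monodromy, cf.\ Remark \ref{strategry_rmk}). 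The naive valuation map on $\{x_0x_1=h\}$ induces an affine structure with no reason to agree with the one from $\pi$, so condition (ii) cannot be read off the equation; and condition (i) requires the corner-point analysis showing the $F$-singular locus is exactly $\hat\Delta=j(\Delta)$ (Theorem \ref{affinoid_torus_away_from_thm}, Lemma \ref{j(B)_lem}), not merely that ``the amoeba coincides with $\Delta$.'' The substantive step your outline omits is the verification $F\circ g=j\circ\pi_0^\vee$ (Theorem \ref{fibration_preserving_thm}), a case-by-case valuation computation over $\bar q\in\Pi$ versus $\bar q\notin\Pi$ that uses the non-archimedean triangle inequality together with the monotonicity of $q_n\mapsto\psi(\bar q,q_n)$ (Lemma \ref{symplectic_area_increasing_lem}); without it neither $f_0\cong\pi_0^\vee$ nor the matching of singular loci and affine structures is established.

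You also misdiagnose the difficulty. There is no infinite scattering to resum in this geometry: by the $T^{n-1}$-symmetry (Lemma \ref{F_gamma_lem}) each gluing map fixes $y_1,\dots,y_{n-1}$, and the single twist $y_n\mapsto y_nh(y)$ is pinned down not by directly controlling Maslov-$0$ series but by the superpotential-matching identity $\mathcal W_-\circ\Phi_\ell=\mathcal W_+$ after compactifying, with $\mathcal W_\pm$ computed from Cho--Oh and the maximal principle; the genuinely new bookkeeping is that the wall has several components $H_1,\dots,H_{n+r}$ and one must track the monodromy of the disk classes across all of them ($\hat\beta|_{\mathscr N_\ell}=\beta_\ell$), since the same formula is glued in through different chart identifications. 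Finally, the Maurer--Cartan mismatch at the singular fiber is a consequence discussed after the construction (\S\ref{ss_singular_discussion}); it is not an ingredient one needs to ``reconcile'' to prove the theorem, because the singular extension is fixed purely on the non-archimedean side by tropical continuity over $j(B)$, not by Floer data over $\Delta$.
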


\begin{rmk}
The SYZ mirror construction has been well-studied by Auroux and many others \cite{AuTDual,Gross_topo_MS,CLL12,chan2016gross,AAK_blowup_toric,GHK15,KSAffine,KSTorus,hong2018immersed}, etc. We apologize for not being able to give a full list. This paper is indebted to various strategies of our predecessors, but we also want to humbly highlight a key limitation of the previous works: the lack of a good notion for the \textit{dual} SYZ fibration, especially concerning singular fibers. We aim to further explore this aspect.
For example, a slightly new geometric input involves the monodromy information for the A-side wall-crossing studies, cf. \S \ref{ss_topological_disk}. Besides, to make a B-side fibration with reasonable matching conditions, we utilize certain non-archimedean geometry, cf. \S \ref{ss_NA_integrable_affinoid_torus}.
\end{rmk}

\begin{rmk}
	The statement is briefly explained here.
	The \textit{tropical continuity} of $f$ in (b) is as introduced by Chambert-Loir and Ducros \cite[(3.1.6)]{Formes_Chambert_2012}.
	However, for clarity, one might first interpret $f:\mathscr Y\to B$ as just a continuous map for the Berkovich analytic topology in $\mathscr Y$ \cite{Berkovich_2012spectral, Berkovich1993etale} and the usual manifold topology in $B$. 
	Due to Kontsevich and Soibelman, we can define the smooth/singular points of $f$, and the smooth part $f_0=f|_{B_0}$ is called an \textit{affinoid torus fibration}; see also \cite[\S 3]{NA_nonarchimedean_SYZ}. Just like the Arnold-Liouville's theorem, any affinoid torus fibration induces an \textit{integral affine structure} on $B_0$ \cite[\S 4]{KSAffine}.
\end{rmk}

\begin{rmk}
A referee raised doubts about Conjecture \ref{conjecture_NA_SYZ}, suggesting that certain cohomological obstructions might prevent a Calabi-Yau variety from admitting a Lagrangian torus fibration. The rigid Calabi-Yau examples considered by Candelas-Derrick-Parkes \cite{candelas1993generalized} were cited as potential counterexamples due to the absence of maximal degenerations, with the remark that such a case "\textit{does not have a mirror space, only a mirror component in the derived category of some Fano variety}." In response, it may be clarified that this work is directed towards exploring a precise definition of the "mirror space". The original SYZ conjecture has also consistently involved Lagrangian fibrations and related concepts.
In fact, the statement of Conjecture \ref{conjecture_NA_SYZ} should be seen as a guiding formulation rather than an absolute claim, open to future refinement as new insights may emerge. While maximal degenerations are a major method for constructing Lagrangian fibrations, prioritizing a single method and overlooking others can lead to a narrow perspective. Other methods, including those using symplectic techniques, have been explored in works such as \cite{AAK_blowup_toric, Lag_3_torus, evans2021constructing}.
For any program aiming to understand mirror symmetry mathematically, we believe that the key question is whether there are sufficient examples to support the program, rather than highlighting universality and exception from the outset.
Developing constructions without concrete examples risks creating theoretical "castles in the air".
The referee also expressed concern that this work represents only an incremental step, as it does not provide a complete proof of mirror symmetry. In response, we note that a purely algebro-geometric approach, while valuable, has inherent limitations in addressing Lagrangian submanifolds within the Fukaya category. Thus, a symplectic method of mirror construction is essential for systematic progress toward the proof of homological mirror symmetry.
The originality of this approach lies in bridging symplectic and non-archimedean geometry, supported by further examples \cite{Yuan_conifold,Yuan_A_n} of Conjecture \ref{conjecture_NA_SYZ} and related applications \cite{Yuan_e.g._FamilyFloer,Yuan_unobs,Yuan_c_1}.
\end{rmk}

The statement of Conjecture \ref{conjecture_NA_SYZ} is mathematically precise and does not mention any mirror symmetry actually. But, we should think $(X,\pi)$ and $(\mathscr Y, f)$ are mirror to each other, and we can make sense of T-duality with an extra Floer-theoretic condition (iii) to specify what we mean by dual tori:

\begin{itemize}
		\item[\textbf{(iii)}] 
		\textit{$f_0$ is isomorphic to the canonical dual affinoid torus fibration $\pi_0^\vee$ associated to $\pi_0$}
\end{itemize}

In the set-theoretic level, if we set $L_q=\pi^{-1}(q)$ and write $U_\Lambda$ for the unit circle in $\Lambda=\mathbb C((T^{\mathbb R}))$ with the non-archimedean norm, then the $\pi_0^\vee$ is simply the following obvious map:
\begin{equation}
	\label{X_0_vee_set_eq}
X_0^\vee \equiv \bigcup_{q\in B_0} H^1(L_q; U_\Lambda) \to B_0
\end{equation}
Family Floer theory with quantum correction further equips $X_0^\vee$ with a non-archimedean analytic structure sheaf such that $\pi_0^\vee$ becomes an affinoid torus fibration (see \S \ref{s_family_review} or \cite{Yuan_I_FamilyFloer}).
It is unique up to isomorphism, so we can say it is canonical, and the meaning of (iii) is also precise.
Note that a change from $U_\Lambda$ to $U(1)$ exactly goes back to the conventional T-duality picture (e.g. \cite{Gross_topo_MS,AuTDual}).
In the level of non-archimedean analytic structure, while the local analytic charts of $\pi_0^\vee$ have been predicted for a long time \cite{FuCounting,FuCyclic,Tu}, the local-to-global \textit{analytic} gluing for $\pi_0^\vee$ is recently achieved in \cite{Yuan_I_FamilyFloer}. Finally, we introduce the following notion:

\begin{defn}
	\label{SYZmirror_defn}
	In the situation of Conjecture \ref{conjecture_NA_SYZ}, if the conditions \textbf{(i) (ii) (iii)} hold and the analytic space $\mathscr Y$ embeds into (the analytification $Y^{\mathrm{an}}$ of) an algebraic variety $Y$ over $\Lambda$ of the same dimension, then we say $Y$ is \textbf{\textit{SYZ mirror}} to $X$.
\end{defn}


\subsection{Main result}
For clarity, we focus on a fundamental example of Theorem \ref{Main_thm_oversimplify_this_paper}, and the general result is stated later in \S \ref{ss_generalizations_introduction}.
We state:

\begin{thm}
	\label{Main_this_paper_fundamental_example}
	The algebraic variety
	\[
	Y=\{(x_0,x_1,y_1,\dots, y_{n-1})\in \Lambda^2\times (\Lambda^*)^{n-1} \mid x_0x_1=1+y_1+\cdots +y_{n-1}\}
	\]
	is SYZ mirror to $X=\mathbb C^n\setminus \{z_1\cdots z_n=1\}$.
\end{thm}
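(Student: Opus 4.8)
The plan is to run the family Floer mirror construction of \cite{Yuan_I_FamilyFloer} over the smooth locus of Gross's fibration on $X$, compute the single wall-crossing transformation explicitly, and then recognize the resulting non-archimedean analytic space together with its base projection as an affinoid subdomain of the analytification of the stated affine variety $Y$.

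\textbf{Step 1 (the Lagrangian fibration).} First I would write down Gross's special Lagrangian fibration $\pi\colon X\to B$ for $X=\mathbb C^n\setminus\{w=1\}$ with $w=z_1\cdots z_n$: using the subtorus $T^{n-1}\subset(S^1)^n$ that preserves $w$, one sets $\pi=(\log|w-1|,\ \mu)\colon X\to \mathbb R\times\mathbb R^{n-1}=B$, where $\mu$ is the moment map of $T^{n-1}$. One checks the fibers are Lagrangian and graded for $\Omega=\mathrm dz_1\wedge\cdots\wedge\mathrm dz_n/(w-1)$, and identifies the discriminant $\Delta\subset B$, the locus over which the $T^{n-1}$-orbit degenerates against $D=\{w=1\}$; over $B_0=B\setminus\Delta$ every fiber $L_q=\pi^{-1}(q)$ is an honest Lagrangian $n$-torus. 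This is standard (cf.\ Auroux, Chan–Lau–Leung) and would be recalled in the body; here I take it as input. Applying \cite{Yuan_I_FamilyFloer} over $B_0$ then produces the canonical dual affinoid torus fibration $\pi_0^\vee\colon X_0^\vee\to B_0$ with underlying set \eqref{X_0_vee_set_eq} and a non-archimedean analytic structure sheaf glued from local charts by wall-crossing automorphisms recording counts of Maslov-$0$ holomorphic disks bounded by the $L_q$. Since $\pi_0^\vee$ is unique up to isomorphism, condition \textbf{(iii)} becomes automatic once $f_0$ is identified with it.

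\textbf{Step 2 (the wall and its crossing).} The key computation is to show that $B_0$ carries a single wall $H$ — the locus where $L_q$ bounds a holomorphic disk meeting $D$ — separating $B_0$ into two chambers, and that crossing $H$ multiplies the coordinate in the $\log|w-1|$-direction by the \emph{polynomial} unit $1+y_1+\cdots+y_{n-1}$, where $y_1,\dots,y_{n-1}$ are the ``horizontal'' $\Lambda^*$-monomials attached to the $T^{n-1}$-factor (in which direction there is no wall-crossing). For toric Calabi–Yau geometries the relevant open Gromov–Witten invariants are known: the only disk classes contributing are the basic ones meeting $D$ exactly once, each counted with multiplicity $1$ (Cho–Oh type computations, as used by Auroux and Chan–Lau–Leung), which produces exactly the factor $1+\sum_i y_i$ and no infinite tail. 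Naming the distinguished coordinate $x_0$ on the chamber where the disk has nonnegative area and $x_1$ on the other chamber, the transition across $H$ reads $x_0x_1=1+y_1+\cdots+y_{n-1}$.

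\textbf{Step 3 (global identification).} Gluing the two charts along $H$ by this transition produces an analytic space $\mathscr Y$ with a projection $f\colon\mathscr Y\to B$; the relation $x_0x_1=1+y_1+\cdots+y_{n-1}$ realizes $\mathscr Y$ as an affinoid subdomain of $Y^{\mathrm{an}}$, with $f$ the tropicalization $(\val(x_0),\val(y_1),\dots,\val(y_{n-1}))$ glued to $(\val(x_1),\val(y_1),\dots,\val(y_{n-1}))$, which one checks is tropically continuous and surjects onto $B$. It then remains to verify \textbf{(i)} that $f$ and $\pi$ share the singular skeleton $\Delta$ — on the $Y$-side, $\Delta$ is the image of $\{x_0=x_1=0\}$ — and \textbf{(ii)} that $f_0$ and $\pi_0$ induce the same integral affine structure on $B_0$: both are the standard one bent by the single wall-crossing across $H$, and the monodromies around $\Delta$ agree. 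Since $\dim_\Lambda Y=n=\dim_{\mathbb C}X$ and $Y$ is affine algebraic, Definition~\ref{SYZmirror_defn} concludes that $Y$ is SYZ mirror to $X$, which simultaneously settles the ``embeds into an algebraic variety'' clause of Theorem~\ref{Main_thm_oversimplify_this_paper}.

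\textbf{Main obstacle.} The crux is Steps 2–3: converting the a priori transcendental, chart-by-chart family Floer output into a \emph{global} analytic isomorphism with (an open subdomain of) the analytification of the explicit algebraic $Y$. This requires (a) pinning down that no further disk classes contribute to the wall-crossing, so that the correction is the honest polynomial $1+\sum_i y_i$ rather than an infinite series, and (b) performing the non-archimedean analytic — not merely formal — gluing and controlling the behavior along $\Delta$, where the family Floer construction lives only over $B_0$ and one must invoke the explicit model $Y$ to see that $f$ extends tropically continuously over the skeleton and that the two affine structures and monodromies match.
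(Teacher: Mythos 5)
Your overall strategy is the same as the paper's over the smooth locus: run the family Floer construction of \cite{Yuan_I_FamilyFloer} on $\pi_0$, use the $T^{n-1}$-symmetry plus the known Clifford/Chekanov disk counts to pin down the wall-crossing $x_0x_1=1+y_1+\cdots+y_{n-1}$, and embed the glued charts $T_\pm$ into $Y^{\mathrm{an}}$ (this is the paper's $g$, following \cite[Lemma 3.1]{GHK_birational}). But there is a genuine gap exactly where you locate the ``main obstacle'': your Step 3 asserts, rather than constructs, the singular fibration $f:\mathscr Y\to B$ and the matching of integral affine structures. The map you propose --- the raw valuation charts $(\val(x_0),\val(y_\bullet))$ and $(\val(x_1),\val(y_\bullet))$ glued chamber by chamber --- does not work as stated. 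First, it does not land in the base $B$ of $\pi$ in any meaningful sense: the action coordinates induced by $\pi_0$ are $(\bar q,\psi_\pm(q))$, where $\psi_\pm$ are flux/symplectic-area functions depending nonlinearly on $\omega$ (under the paper's $g$ one has $\val(x_1)=\psi(q)$, not $q_n$), so condition \textbf{(ii)} cannot even be formulated, let alone checked, without identifying the two bases through $\psi$; the paper stresses that matching monodromy around $\Delta$ is strictly weaker than matching the affine structure, which changes when $\psi$ is deformed. Second, the two valuation charts do not glue to a map that extends continuously over $\Delta$: the paper's extension requires the min-type corrections $F_0=\min\{\val(x_0),-\psi_0(\bar q)+\min\{0,\bar q\}\}$, $F_1=\min\{\val(x_1),\psi_0(\bar q)\}$, the topological embedding $j:B\to\mathbb R^{n+1}$ unfolding the singularity (Kontsevich--Soibelman \S 8 style), the broken-line description of $j(B)=F(\mathscr Y)$ identifying the $F$-singular locus with $j(\Delta)$, and the case-by-case verification $F\circ g=j\circ\pi_0^\vee$ using the non-archimedean triangle inequality and the monotonicity of $q_n\mapsto\psi(\bar q,q_n)$. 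None of this machinery appears in your outline, and it is precisely the content that makes conditions \textbf{(i)} and \textbf{(ii)} of Definition \ref{SYZmirror_defn} verifiable; ``one checks it is tropically continuous and surjects onto $B$'' hides the theorem.

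Two smaller points. The wall is not a single connected hypersurface: it is $(\mathbb R^{n-1}\setminus\Pi)\times\{0\}$, with $n$ components $H_1,\dots,H_n$, and the paper insists on controlling the gluings $\Phi_\ell$ across \emph{all} components (they turn out to have the same formula but different domains, and the affine transitions $\upchi_\ell$ restrict the min to different linear pieces); your ``single wall'' phrasing skates over this. Also, the wall-crossing in the family Floer framework is governed by Maslov-$0$ counts along isotopies, which Cho--Oh does not compute directly; the paper instead determines $\Phi_\ell$ indirectly, from the $T^{n-1}$-symmetry (Lemma \ref{F_gamma_lem}) together with the invariance $\mathcal W_-\circ\Phi_\ell=\mathcal W_+$ of the superpotential computed in the compactification $\overline X=\mathbb C^n$. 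That route is available to you and would repair Step 2, but the missing construction of $(j,F)$ and the $\psi$-dependent base identification is the substantive gap.
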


\begin{rmk}
	\label{Auroux_T_duality_complement_rmk}
	The mirror space $Y$ is expected by the works of Abouzaid-Auroux-Katzarkov \cite{AAK_blowup_toric} and Chan-Lau-Leung \cite{CLL12}.
	If we take $\bar X=\mathbb C^n$ instead of $X$ without removing the divisor, the mirror will be the same $Y$ with an extra superpotential $W=x_1$ \cite{AuTDual,Au_Special}.
	It will be further discussed in \S \ref{ss_folklore_introduction}.
\end{rmk}


\subsection{Relation to the literature}
\label{ss_literature}

The integral affine structure matching condition \textbf{(ii)} hinders the direct application of Kontsevich and Soibelman's construction \cite{KSAffine}. Indeed, the integral affine coordinates from a Lagrangian fibration subtly depend on the symplectic form; similarly, on the non-archimedean B-side, there is also a delicate story about the induced integral affine structure from $f$. For instance, deforming $\psi$ in our solution (\ref{explicit_formula_introduction_eq}) alters the integral affine structure, even if the monodromy around the singular locus may be unchanged.
This subtle point necessitates detailed calculations, even though we are able to write down explicitly the formula of the solution $f$ as in (\ref{explicit_formula_introduction_eq}) (cf. Remark \ref{strategry_rmk}).

Within the literature, an approach has been presented to construct an affinoid torus fibration (away from a singular locus) using Berkovich retraction. This method draws inspiration from birational geometry \cite{NA_nonarchimedean_SYZ}.
But, our construction of affinoid torus fibration uses a different method and is grounded in the Floer-theoretic analysis of a Lagrangian fibration in view of \textbf{(iii)}.

The underlying principle follows Auroux's framework \cite{AuTDual,Au_Special} of wall-crossing (see also \cite{cho2017localized, cho2018gluing,hong2018immersed}, etc.). The primary differences are that Gromov's compactness guarantees convergence only over the Novikov field rather than $\mathbb C$, and that non-archimedean geometry is required to interpret another version of torus fibration, which also induces an integral affine structure on the base.
In particular, we study \textit{two} fibrations on distinct spaces simultaneously, rather than focusing on a single fibration. To the best of our knowledge, the existing literature of studying two \textit{singular} fibrations simultaneously in the context of the SYZ conjecture may trace back to Gross's work on topological mirror symmetry \cite{Gross_topo_MS}.

\subsection{Sketch of proof of Theorem \ref{Main_this_paper_fundamental_example} omitting Floer-theoretic condition (iii)}
\label{ss_one_page_proof_intro}

Let's provide a glimpse into the structure of the solution to Conjecture \ref{conjecture_NA_SYZ}. Despite an explicit answer, a comprehensive proof and detailed calculations are still necessary and deferred to the main body of this paper.

We restrict the standard symplectic form in $\mathbb C^n$ to $X$ and consider the special Lagrangian fibration
\begin{equation}
	\label{pi_intro_eq}
\textstyle
\pi:X\to\mathbb R^n, \quad (z_1,\dots, z_n)\mapsto
( \tfrac{|z_1|^2-|z_n|^2}{2}, \dots, \tfrac{|z_{n-1}|^2-|z_n|^2}{2}, \log |z_1\cdots z_n -1| )
\end{equation}
Denote by $B_0$ and $\Delta$ the smooth and singular loci of $\pi$ in the base $B=\mathbb R^n$. There is a continuous map $\psi:\mathbb R^n \to \mathbb R$, smooth in $B_0$, so that $(\tfrac{|z_1|^2-|z_n|^2}{2}, \dots, \tfrac{|z_{n-1}|^2-|z_n|^2}{2}, \psi\circ \pi )$ forms a set of action coordinates locally over $B_0$.
Roughly, the function $\psi$ indicates the symplectic areas of holomorphic disks in $\mathbb C^n$ bounded by the $\pi$-fibers parameterized by the base points in $\mathbb R^n$ (Figure \ref{figure_area_sing_fiber}).



\begin{proof}[Sketch of proof of Theorem \ref{Main_this_paper_fundamental_example} omitting \emph{(iii)}]
	Consider an analytic domain $\mathscr Y=\{ |x_1|<1\}$ in $Y^{\mathrm{an}}$. 
	Define a topological embedding $j: \mathbb R^n \to\mathbb R^{n+1}$ assigning $q=(q_1,\dots, q_{n-1}, q_n)=(\bar q,q_n)$ in $\mathbb R^n$ to
	\[
	j(q)=(\min\{-\psi(q), -\psi(\bar q, 0) \}+\min\{0,\bar q\}  \ , \ \min\{  \psi(q),  \psi(\bar q,0) \} \ , \  \bar q)
	\]
	Define a tropically continuous map $F:Y^{\mathrm{an}}\to\mathbb R^{n+1}$ by
	\[
	F(x_0,x_1,y_1,\dots, y_{n-1})=(F_0,F_1, \val(y_1),\dots, \val(y_{n-1}))
	\]
	where $\mathsf v:\Lambda\to\mathbb R\cup\{\infty\}$ is the non-archimedean valuation and
	\[
	\begin{cases}
		F_0=\min \{
		\val(x_0), -\psi(\val(y_1),\dots, \val(y_{n-1}),0)+\min\{0,\val(y_1),\dots, \val(y_{n-1})\} \ \}  
		\\
		F_1=\min\{
		\val(x_1), \ \ \ \psi (\val(y_1),\dots, \val(y_{n-1}),0) \ \}
	\end{cases}
	\]
We can check $j(\mathbb R^n)=F(\mathscr Y)$ (cf. Figure \ref{figure_visualize_image_j}). Then, we can define
\begin{equation}
	\label{explicit_formula_introduction_eq}
	f=j^{-1}\circ F|_{\mathscr Y} \ : \mathscr Y\to \mathbb R^n
\end{equation}
This is a variant of \cite[\S 8]{KSAffine} that further includes the data of the symplectic form; see also \S \ref{ss_defn_f_B_fibration}.
By detailed calculations (Section \ref{sss_describe_F} and Theorem \ref{fibration_preserving_thm}), we will find: the smooth / singular loci and the induced integral affine structure of $f$ \textbf{\textit{all}} precisely agree with those of $\pi$ (\ref{pi_intro_eq}).
Except the duality condition (iii), the proof is complete.
\end{proof}

\begin{rmk}
	\label{strategry_rmk}
	Here we adopt the strategy of Kontsevich and Soibelman \cite[\S 8]{KSAffine}. Rather than seeking the desired Berkovich-continuous map $f: \mathscr Y\to B$, we find an alternative $F:\mathscr Y\to \mathbb R^N$ for some larger $N$ such that the image of $F$ is identified with the singular integral affine manifold $B$ through a map $j$.
	This embeds $B$ into a larger Euclidean space $\mathbb R^N$ to unfold the singularities (Figure \ref{figure_visualize_image_j}).
	While this approach might seem ad-hoc for the singular part, the smooth part $f_0\cong \pi_0^\vee$ remains canonical by the family Floer condition \textbf{(iii)}. We hypothesize that the tropical continuity condition might ensure the uniqueness of the singular extension from $f_0$ to $f$ due to the piecewise-smooth nature of the reduced symplectic geometry, but this will be addressed in future work.
	Without the guidance from Floer theory, constructing the appropriate $f$ is quite challenging. At least, directly replicating the example by Kontsevich-Soibelman seems difficult to match the singular integral affine structure from the Lagrangian fibration $\pi$, given its intricate dependence on the given symplectic form $\omega$ (cf. \S \ref{ss_literature}).
\end{rmk}

\begin{figure}[h]
	\centering
	\begin{subfigure}{0.55\textwidth}
		\centering
		\includegraphics[scale=0.13]{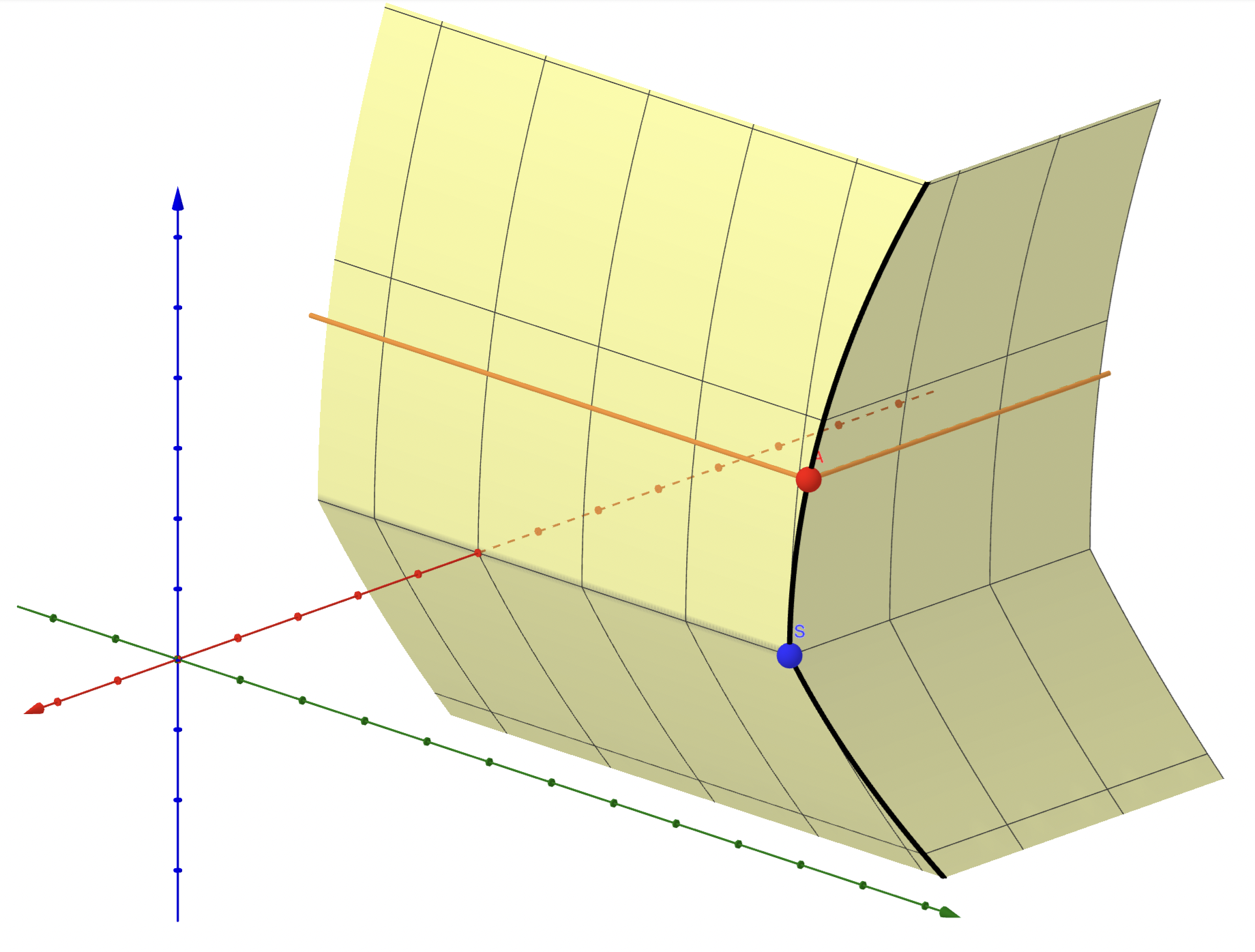}
	\end{subfigure}
	\begin{subfigure}{0.43\textwidth}
		\includegraphics[scale=0.13]{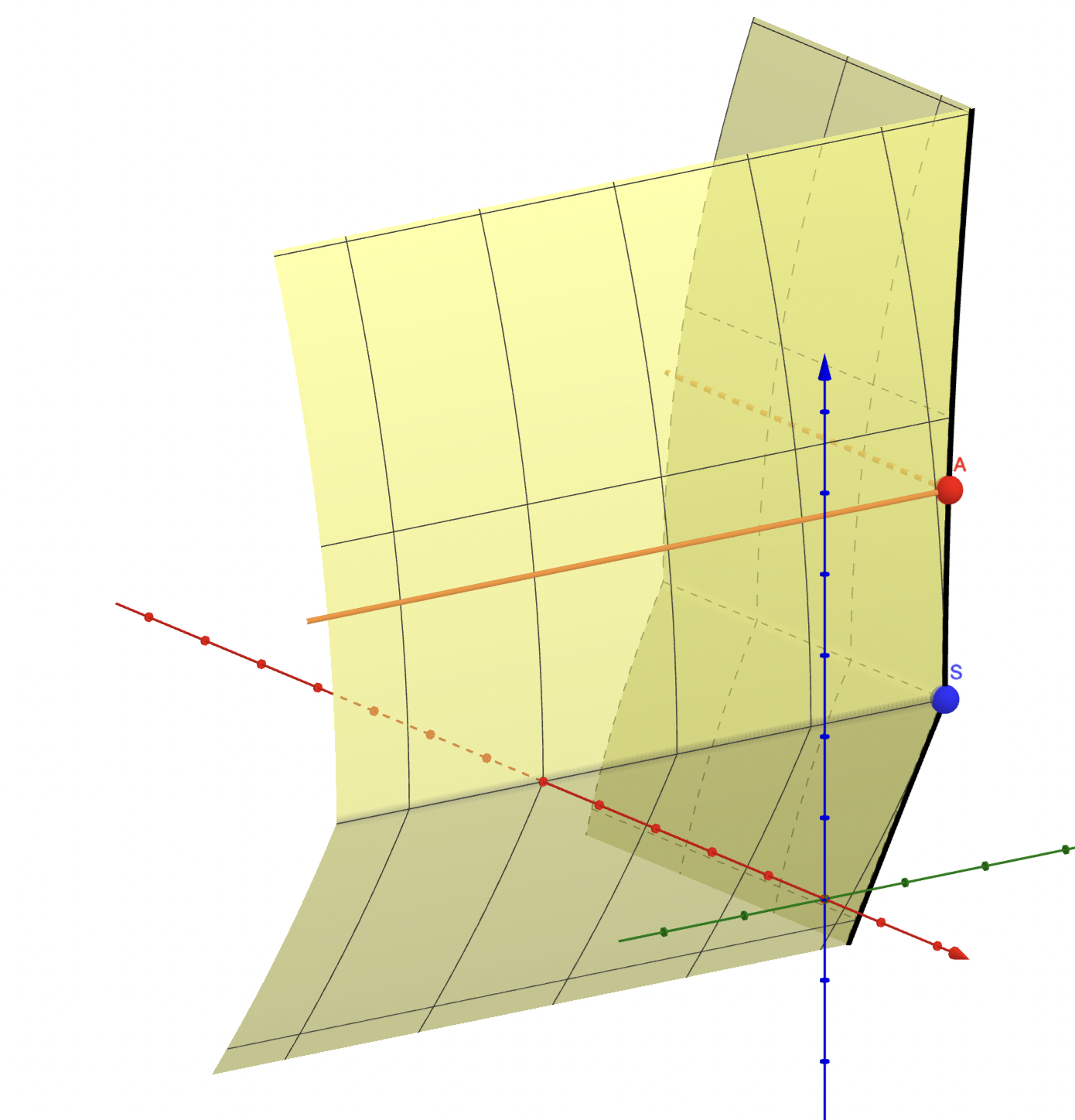}
	\end{subfigure}
	\caption{
		\footnotesize The image $j(B)=F(\mathscr Y)$ in $\mathbb R^{3}$ for $n=2$: It morally visualizes the integral affine structure.
	}
	\label{figure_visualize_image_j}
\end{figure}

\subsection{Outline of the construction}

The existence of singular Lagrangian fibers may induce two types of quantum corrections of holomorphic disks as illustrated in red and yellow in Figure \ref{figure_area_sing_fiber}. The red disk meets the singular fiber at an interior point, while the yellow disk meets it at an boundary point.
We will discuss the \textit{Floer aspect} and the \textit{non-archimedean analytic aspect} about them respectively.

\subsubsection{Floer aspect: dual affinoid torus fibration. } 
Denote by $\pi_0:X_0\to B_0$ the smooth part of the Gross Lagrangian fibration $\pi:X\to B$.
The non-archimedean dual torus fibration depends on the entire ambient space $X$, as the disks may extend beyond the region $X_0=\pi^{-1}(B_0)$.
By the Floer aspect of this type of quantum correction, we can canonically associate to $(X,\pi_0)$ an analytic space $X_0^\vee$ with an \textit{abstract} dual affinoid torus fibration $\pi_0^\vee: X_0^\vee\to B_0$ (\S \ref{s_family_review} or \cite{Yuan_I_FamilyFloer}), unique up to isomorphism, so that its induced integral affine structure on $B_0$ agrees with the one induced by $\pi_0$ and the set of closed points in $X_0^\vee$ are given by (\ref{X_0_vee_set_eq}).
Meanwhile, there is the other \textit{concrete} affinoid torus fibration $f_0:\mathscr Y_0\to B_0$, i.e. the smooth part of the analytic fibration $f$ in (\ref{explicit_formula_introduction_eq}).

The initial step for our version of T-duality can be stated in a single relation as follows:

\begin{prop}
		\label{Main_proposition_over_B_0}
	There is an isomorphism of affinoid torus fibration
$
\pi_0^\vee \cong f_0
$.
\end{prop}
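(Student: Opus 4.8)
The plan is to prove Proposition \ref{Main_proposition_over_B_0} by checking that the concrete affinoid torus fibration $f_0:\mathscr Y_0\to B_0$ satisfies the universal property that characterizes the abstract dual $\pi_0^\vee$ up to isomorphism. Concretely, both $f_0$ and $\pi_0^\vee$ are affinoid torus fibrations over $B_0$; by the rigidity statement recalled in \S\ref{s_family_review} (following \cite{Yuan_I_FamilyFloer}), an isomorphism between them is pinned down once one matches (1) the set of closed points fiberwise, i.e. the identification of the fiber over $q$ with $H^1(L_q;U_\Lambda)$ via \eqref{X_0_vee_set_eq}, and (2) the induced integral affine structure on $B_0$. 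So the proof reduces to two verifications, plus a compatibility check that the two matchings are realized by a single analytic morphism.

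First I would make the Floer side explicit. Over $B_0$ the fibration $\pi_0$ lives inside the ambient $X=\mathbb C^n\setminus\{z_1\cdots z_n=1\}$, and the relevant quantum corrections come from the Maslov-$0$ holomorphic disks bounded by the $\pi_0$-fibers (Figure \ref{figure_area_sing_fiber}, red), whose relative classes and areas are governed exactly by the function $\psi$ of \eqref{psi_global_in_B_eq} and by the reduced symplectic geometry described in the remarks above. I would write down the family Floer local charts of $\pi_0^\vee$ as in \cite{FuCounting,Tu,Yuan_I_FamilyFloer}: over a contractible action-coordinate chart, $\pi_0^\vee$ is the affinoid torus $\{(\xi_1,\dots,\xi_n)\mid \val(\xi_i)=q_i\}$ with gluing maps twisted by the disk-counting generating series, the wall-crossing contributions being precisely the factor $1+y_1+\cdots+y_{n-1}$ that appears in the defining equation of $Y$. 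Then I would compute $F$ and $j$ from \S\ref{ss_one_page_proof_intro} in the same coordinates: the components $F_0,F_1$ encode $\val(x_0),\val(x_1)$ corrected by $\psi$, and the identity $j(\mathbb R^n)=F(\mathscr Y)$ (Figure \ref{figure_visualize_image_j}) says $f=j^{-1}\circ F$ is a well-defined affinoid torus fibration. Matching the closed points is then the observation that the fiber coordinates $(x_0,x_1,y_1,\dots,y_{n-1})$ with $\val$ fixed, modulo the relation $x_0x_1=1+\sum y_i$, are exactly a torsor under $H^1(L_q;U_\Lambda)\cong (U_\Lambda)^n$, matching \eqref{X_0_vee_set_eq}.

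Next I would verify condition (2): that $f_0$ and $\pi_0^\vee$ induce the same integral affine structure on $B_0$. For $\pi_0^\vee$ this structure is, by the family Floer construction, the one induced by the action coordinates of $\pi_0$, namely $(\bar q,\psi(q))$ up to integral affine transformations. For $f_0$, the integral affine structure is read off from the monomial/tropical coordinates of the affinoid charts pulled back through $j^{-1}$; the point of the explicit choice of $j$ (the $\min$'s involving $-\psi(q)$, $\psi(q)$, $\psi(\bar q,0)$, $\min\{0,\bar q\}$) is precisely that on each chamber of $B_0$ the composition $j^{-1}\circ F$ restricted to a single branch is an integral affine isomorphism onto its image. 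This is where the bulk of the detailed calculation lives, and it is the step I expect to be the main obstacle: one must check that every local monomial chart of $\mathscr Y_0$, after composition with $j^{-1}$, produces transition functions lying in $GL_n(\mathbb Z)\ltimes\mathbb R^n$ and moreover equal (not merely conjugate) to those coming from $\psi$, including getting the monodromy around $\Delta$ right — exactly the subtlety flagged in \S\ref{ss_literature} that the affine structure depends delicately on $\omega$ through $\psi$. Finally, I would assemble (1) and (2): since an affinoid torus fibration over a fixed integral affine base with a fixed fiberwise torsor structure is unique up to isomorphism (the uniqueness part of the family Floer output, \S\ref{s_family_review}), the matchings force an isomorphism $\pi_0^\vee\cong f_0$, completing the proof.
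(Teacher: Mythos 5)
The reduction at the heart of your argument does not hold. You propose to match (1) the closed points fiberwise with (\ref{X_0_vee_set_eq}) and (2) the induced integral affine structures on $B_0$, and then invoke the uniqueness clause of Theorem \ref{Main_theorem_thesis_thm}(a) to force $\pi_0^\vee\cong f_0$. But that uniqueness is a statement about the family Floer construction being independent of the choices made in building it (almost complex structures, perturbations, $A_\infty$ homotopy equivalences, charts); it is not a universal property characterizing $\pi_0^\vee$ among all affinoid torus fibrations over $B_0$ with prescribed affine structure and prescribed closed points. An affinoid torus fibration is not determined by those two pieces of data: regluing local tropical charts by fiber-preserving automorphisms of the form $\mathbf y^\alpha\mapsto \mathbf y^\alpha\exp\langle\alpha,\pmb{\mathfrak F}\rangle$, with valuation-zero correction factors, changes the isomorphism class of the fibration while leaving both the induced integral affine structure on $B_0$ and the set of closed points untouched. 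This is exactly the wall-crossing phenomenon the construction is designed to capture --- the quantum-corrected mirror differs from the naive/semiflat gluing precisely by such twists --- so the final ``assembly'' step of your proposal begs the question. Likewise, asserting that the wall-crossing contribution ``is precisely the factor $1+y_1+\cdots+y_{n-1}$'' is not an observation one can read off; it is the main thing to be proved.

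What the paper does instead is to pin down the gluing data of $\pi_0^\vee$ explicitly and then exhibit a concrete intertwining isomorphism. Over the Clifford and Chekanov chambers it takes the charts $\tau_\pm$ in (\ref{tau_+_eq}), (\ref{tau_-_eq}); the $T^{n-1}$-symmetry (Lemma \ref{F_gamma_lem}) shows $\langle\sigma_k,\pmb{\mathfrak F}_\ell\rangle=0$, so each gluing map $\Phi_\ell$ preserves the first $n-1$ coordinates, and the superpotential-preservation identity (\ref{superpotential_gluing_map_eq}), applied after placing $\pi_0$ in the larger ambient space $\overline X=\mathbb C^n$ where $\mathcal W_\pm$ are computed via Cho--Oh and the maximum principle, determines $\Phi_\ell$ completely as (\ref{Phi_ell_explicit_determined_eq}). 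This yields the identification $X_0^\vee\cong T_+\sqcup T_-/\sim$, and the explicit Gross--Hacking--Keel-type maps $g_\pm$ in (\ref{g+}), (\ref{g-}) satisfy $g_+=g_-\circ\Phi_\ell$, hence glue to an analytic embedding $g$ with image $\mathscr Y_0$; the valuation computation of Theorem \ref{fibration_preserving_thm}, namely $F\circ g=j\circ\pi_0^\vee$, then gives $\pi_0^\vee=f_0\circ g$, which is the asserted isomorphism. Your verification of the affine-structure matching via $\psi$ and $j$ is indeed part of the content, but without the explicit computation of the $\Phi_\ell$ and the map $g$ there is no way to conclude; an appeal to uniqueness over a fixed affine base cannot substitute for it.
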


The former $\pi_0^\vee$ is constructed canonically but abstractly, while the latter $f_0$ is ad hoc but concrete.
We can even explicitly write down an analytic embedding $g:X_0^\vee\to \Lambda^2\times (\Lambda^*)^{n-1}$ with $\pi_0^\vee=f_0\circ g$. The map $g$ identifies $X_0^\vee$ with the analytic domain $\mathscr Y_0$ in $Y$. 
In view of (\ref{X_0_vee_set_eq}), 
any closed point in $\mathscr Y_0$ can be realized as a local $U_\Lambda$-system $\mathbf y$ in some $H^1(L_q;U_\Lambda)$.
The explicit definition formula of $g$ is in \S \ref{ss_g_analytic_embedding}.
\begin{equation}
	\label{g_embedding_intro_eq}
\xymatrix{
	X_0^\vee \ar[rr]^{g}_{\cong} \ar[dr]_{\pi_0^\vee} &  & \mathscr Y_0 \ar[dl]^{f_0}
	\\
	& B_0 & 
}
\end{equation}

\begin{rmk}
	\label{only_place_Floer_intro_rmk}
	The \textit{\textbf{only}} place we need the Floer theory is basically an identification between the family Floer mirror analytic space $X_0^\vee$ and an `adjunction' analytic space $T_+\sqcup T_- /\sim$ obtained by gluing two analytic open domains $T_\pm \subsetneq (\Lambda^*)^n$ which correspond to the Clifford / Chekanov tori respectively (Remark \ref{floer_omit_rmk}).
	Such a simplification is not easy but now enables us to apply the idea in \cite[Lemma 3.1]{GHK_birational} where two copies of complex tori $(\mathbb C^*)^n$ are glued instead; further combining the non-archimedean picture in \cite[p.44]{KSAffine} enables us to discover the desired embedding $g$ (see a reader guide in Remark \ref{floer_omit_rmk}.)
\end{rmk}

\begin{rmk}
	\label{min_tropical_rmk}
	The tropical polynomial
	$
	\min\{0,q_1,\dots, q_{n-1}\}
	$
	plays the leading roles in the singularities of both the A and B sides.
	First, the induced tropical hypersurface (i.e. the above minimum value attains at least twice) exactly describes the singular locus skeleton of the Gross Lagrangian fibration $\pi$ in (\ref{pi_intro_eq}).
	Second, $\val(1+y_1+\cdots +y_{n-1})$ is either $>$ or $=$ $\min\{0,\val(y_1),\dots, \val(y_{n-1})\}$ by the non-archimedean triangle inequality. The ambiguity case $>$ happens only if the minimum attains at least twice as well. After some effort, one may find that this ambiguity is the very reason of the singularity of $f$ in (\ref{explicit_formula_introduction_eq}).
	In general (\S \ref{ss_generalizations_introduction}),
 the desired dual map $f$ is almost the same as (\ref{explicit_formula_introduction_eq}) but using another tropical polynomial.
\end{rmk}

\begin{figure}
	\centering
	\begin{subfigure}{0.5\textwidth}
		\centering
		\includegraphics[scale=0.32]{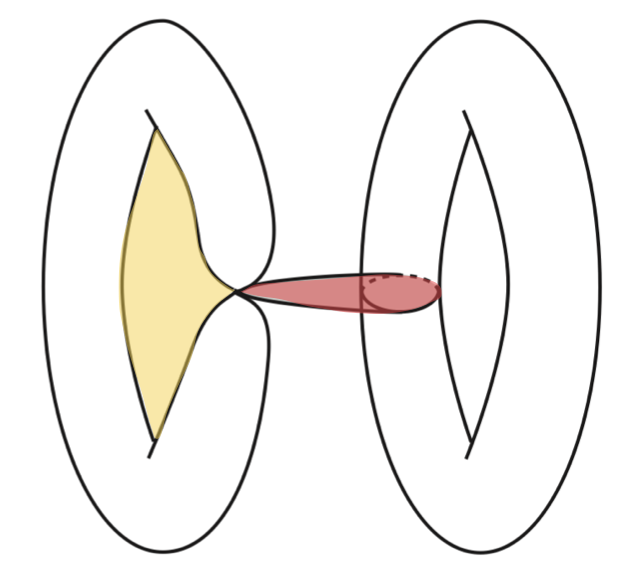}
	\end{subfigure}
	\begin{subfigure}{0.45\textwidth}
		\centering
		\includegraphics[scale=0.3]{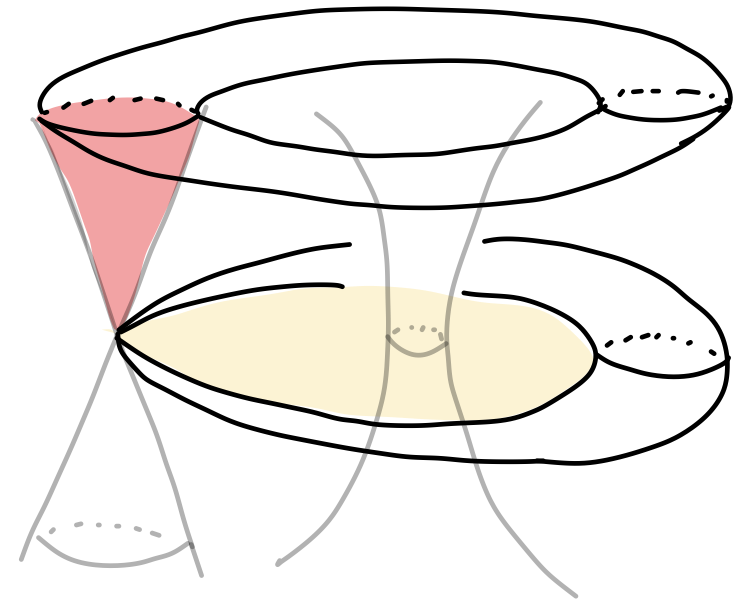}
	\end{subfigure}
	\caption{
		\footnotesize Two types of quantum corrections in red and yellow, meeting the singular fibers at the interior / boundary points of the disk domain respectively. The right side concerns the Lagrangian fibration $\pi$ in (\ref{pi_intro_eq}) for $n=2$ and follows Auroux \cite[5.1]{AuTDual}.}
	\label{figure_area_sing_fiber}
\end{figure}

\subsubsection{Non-archimedean analytic aspect: dual singular fibers. }
\label{sss_dual_singular_intro}

The significance of Proposition \ref{Main_proposition_over_B_0} lies on the fact that the abstract affinoid torus fibration $\pi_0^\vee: X_0^\vee\to B_0$ has an explicit model $f_0$, via $g$, which naturally has an obvious tropically continuous extension $f$ in (\ref{explicit_formula_introduction_eq}) fitting in the diagram below.
\begin{equation}
	\label{g_embedding_extend_singular_intro_eq}
	\xymatrix{
		 X_0^\vee \ \ \ar@{^{(}->}[rr]^{g} \ar[d]_{\pi_0^\vee} & & \mathscr Y \ar[d]_f \\
		B_0 \ \ \ar@{^{(}->}[rr] & & B
	}
\end{equation}
The left vertical arrow $\pi_0^\vee$ is the family Floer dual affinoid torus fibration $\pi_0^\vee$ in (iii).
The upper horizontal map $g$ follows Gross-Hacking-Keel in \cite[Lemma 3.1]{GHK_birational}. The right vertical arrow $f$ generalizes Kontsevich-Soibelman's singular model in \cite[\S 8]{KSAffine}. Finally, the top right corner $\mathscr Y$ agrees with many previous results \cite{AuTDual,Au_Special,AAK_blowup_toric,CLL12,Abouzaid_Sylvan,Gammage_localSYZ}, etc.


More importantly, the dual singular fibers do naturally capture the data of Lagrangian singular fibers:
The other type of quantum correction
refers to the holomorphic disks whose boundary meet the singular fibers (Figure \ref{figure_area_sing_fiber}, yellow), and they are all reflected by the formula of $f$ (\ref{explicit_formula_introduction_eq}).
In summary, under the control of the canonical analytic structure on $(X_0^\vee, \pi_0^\vee)$ from the Floer aspect of the first type of quantum correction, the second type correction deduces the singular analytic extension.
When we extend $f_0$ to $f$ \textit{tropically continuously} \cite[(3.1.6)]{Formes_Chambert_2012},
the topological extension $B_0\xhookrightarrow{} B$ controls the analytic extension overhead (\S \ref{ss_NA_integrable_affinoid_torus}).
Note that any affinoid torus fibration is tropically continuous.

Our explicit description of the $f$ in (\ref{explicit_formula_introduction_eq}) can depict all the dual singular fibers simultaneously, and the general results in \S \ref{ss_generalizations_introduction} will even offer more local models of SYZ singularities.
The aspects of non-archimedean analytic topology seem to outweigh the Floer-theoretic considerations around the singular locus $\Delta=B\setminus B_0$.
Moreover, we will astonishingly discover in the set level that (see \S \ref{ss_singular_discussion})
\begin{equation}
	\label{negate_MC}
	\text{Dual singular fiber} \  \  \supsetneq \ \ \text{Maurer-Cartan set of the singular fiber}
\end{equation}
based on the work of Hong-Kim-Lau \cite{hong2018immersed}.
This justifies our standpoint in \cite{Yuan_I_FamilyFloer} that going beyond the usual Maurer-Cartan picture \cite{Tu,FuCounting,FuBerkeley} is necessary to produce the global mirror analytic structure.
A well-known fact in the area of homological algebra is that the homotopy equivalences among $A_\infty$ algebras induce bijections of Maurer-Cartan sets. However, this offers merely a local or set-theoretic approximation and is not sufficient for the local-to-global construction of a non-archimedean analytic space. By definition, the latter is built by matching the affinoid spaces instead.

\subsection{Main result in general}
\label{ss_generalizations_introduction}

Our method is very powerful in that the same ideas for Theorem \ref{Main_this_paper_fundamental_example} with some basics of tropical and toric geometry can obtain more general results with very little extra effort.

Denote by $N$ and $M$ two lattices of rank $n$ that are dual to each other. Set $N_{\mathbb R}=N\otimes \mathbb R$ and $M_{\mathbb R}=M\otimes \mathbb R$.
Let $\Sigma$ be a simplicial smooth fan with maximal cones $n$-dimensional in $N_{\mathbb R}$. Assume $v_1,\dots, v_n$ are the primitive generators of the rays in a maximal cone in $\Sigma$, so they form a $\mathbb Z$-basis of $N$. Denote by $v_1^*,\dots, v_n^*$ the dual basis of $M$.
Denote the remaining rays in $\Sigma$ by $v_{n+1},\dots, v_{n+r}$ for $r\ge 0$, and we set $v_{n+a}=\sum_{j=1}^n k_{aj} v_j$ for $k_{aj}\in\mathbb Z$ and $1\le a\le r$.
Assume the toric variety $\mathcal X_\Sigma$ associated to $\Sigma$ is Calabi-Yau; namely, there exists $m_0\in M$ so that $\langle m_0,v\rangle =1$ for any ray $v$ in $\Sigma$. Then, $m_0=v_1^*+\cdots +v_n^*$ and $\sum_{j=1}^n k_{aj}=1$ for any $1\le a \le r$.
Let $w=z^{m_0}$ be the toric character associated to $m_0$, and 
$
\mathscr D:=\{w=1\}
$
is an anti-canonical divisor in $\mathcal X_\Sigma$.
We equip $\mathcal X_\Sigma$ with a toric K\"ahler form $\omega$, and the moment map $\mu: \mathcal X_\Sigma\to M_{\mathbb R}$ is onto a polyhedral $P$ described by a collection of inequalities as follows:
\begin{equation}
	\label{polyhedral_P_intro_eq}
P: \quad \langle m, v_i\rangle +\lambda_i\ge 0 \qquad m\in M_{\mathbb R}
\end{equation}
where the $v_i$'s run over all the rays in $\Sigma$ and $\lambda_i\in\mathbb R$.
The sublattice $\bar N=\{ n\in N\mid \langle m_0,n\rangle =0\}$ has a basis $\sigma_s=v_s-v_n$ for $1\le s<n$. The action of $\bar N\otimes \mathbb C^*$ preserves $\mathscr D$ and gives a moment map $\bar\mu$ onto $\bar M_{\mathbb R}:=M_{\mathbb R}/\mathbb Rm_0$ such that $p\circ \mu=\bar \mu$ for the projection $p: M_{\mathbb R}\to \bar M_{\mathbb R}$. One can show $p$ induces a homeomorphism $\partial P\cong \bar M_{\mathbb R}$.
We identify
$\bar M_{\mathbb R}:=M_{\mathbb R}/\mathbb Rm_0$ with a copy of $\mathbb R^{n-1}$ in $M_{\mathbb R}\cong \mathbb R^n$ consisting of $(m_1,\dots, m_n)$ with $m_n=0$.
Now, the \textit{Gross special Lagrangian fibration} \cite{Gross_ex} refers to $\pi:=(\bar \mu, \log|w-1|)$ on
$
X:=\mathcal X_\Sigma\setminus \mathscr D
$ (see also \cite{AAK_blowup_toric,CLL12}).
The singular locus of $\pi$ takes the form $\Delta=\Pi\times \{0\}$ where $\Pi$ is the tropical hypersurface (Figure \ref{figure_tropical}) in $\bar M_{\mathbb R}\cong \mathbb R^{n-1}$ associated to the tropical polynomial that is decided by the data in (\ref{polyhedral_P_intro_eq}):
\begin{equation}
	\label{tropical_polynomial_introduction_eq}
	h_{\mathrm{trop}}(q_1,\dots, q_{n-1})= \min\Big\{ \lambda_n,  \{q_k+\lambda_k\}_{1\le k<n}, \{\textstyle \sum_{s=1}^{n-1}k_{as}q_s+\lambda_{n+a}\}_{1\le a\le r}
	\Big\}
\end{equation}
Recall that the Novikov field $\Lambda=\mathbb C((T^{\mathbb R}))$ is algebraically closed. Let $\Lambda_0$ and $\Lambda_+$ be the valuation ring and its maximal ideal.
On the other hand, given the A side data above, we consider
\begin{equation}
	\label{tropical_original_h_introduction_eq}
	h(y_1,\dots, y_{n-1})=
	T^{\lambda_n}(1+\delta_n) + \sum_{s=1}^{n-1}T^{\lambda_s} y_s (1+\delta_s)  + \sum_a T^{\lambda_{n+a}} (1+\delta_{n+a}) \prod_{s=1}^{n-1} y_s^{k_{as}}  
\end{equation}
where $\delta_i\in \Lambda_+$ is given by some virtual counts so that $\mathsf v(\delta_i)$ is the smallest symplectic area of the sphere bubbles meeting the toric divisor $D_i$ associated to $v_i$ (see \S \ref{s_generalization} for the details).

\begin{rmk}
	\label{delta_i_zero_condition_rmk}
 In general, the Cho-Oh's result \cite{Cho_Oh} is not enough to decide the $\delta_i$'s. But, whenever $D_i$ is non-compact, one can use the maximal principle to show that $\delta_i=0$ (c.f. \cite[\S 5.2]{CLL12}).
 The expressions of $\delta_i$ may be also interpreted via the inverse mirror maps due to \cite{chan2016gross}.
\end{rmk}

A key observation is that the tropicalization of $h$ in (\ref{tropical_original_h_introduction_eq}) is precisely $h_{\mathrm{trop}}$ in (\ref{tropical_polynomial_introduction_eq}), since $\delta_i\in\Lambda_+$. This picture is lost if we only work over $\mathbb C$. By Definition \ref{SYZmirror_defn}, let's state a more general result:

\begin{thm}
	\label{Main_this_paper_general}
	The algebraic variety
	\[
	Y=\{(x_0,x_1,y_1,\dots, y_{n-1})\in \Lambda^2\times (\Lambda^*)^{n-1} \mid x_0x_1= h(y_1,\dots, y_{n-1}) \}
	\]
	is SYZ mirror to $X=\mathcal X_\Sigma \setminus \mathscr D$.
\end{thm}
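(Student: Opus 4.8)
The plan is to follow the blueprint of the one-page sketch of Theorem \ref{Main_this_paper_fundamental_example}, now driven by the tropical polynomial $h_{\mathrm{trop}}$ of (\ref{tropical_polynomial_introduction_eq}) and its Novikov lift $h$ of (\ref{tropical_original_h_introduction_eq}). By Definition \ref{SYZmirror_defn}, since $\pi=(\bar\mu,\log|w-1|)$ is already Gross's special Lagrangian fibration on $X=\mathcal X_\Sigma\setminus\mathscr D$ — so that its fibers are graded with respect to the standard holomorphic volume form on this open Calabi--Yau, and clause (a) of Conjecture \ref{conjecture_NA_SYZ} holds — it remains to (1) build a tropically continuous surjection $f:\mathscr Y\to B$ from an analytic domain $\mathscr Y\subset Y^{\mathrm{an}}$ onto $B=\bar M_{\mathbb R}\times\mathbb R\cong\mathbb R^n$, and (2) check conditions \textbf{(i)}, \textbf{(ii)}, \textbf{(iii)}. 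The last clause of Definition \ref{SYZmirror_defn} is then free: $\mathscr Y$ is by construction an analytic subdomain of $Y^{\mathrm{an}}$ and $\dim Y=n$.

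\emph{The concrete side.} Take $\mathscr Y=\{|x_1|<1\}\subset Y^{\mathrm{an}}$. Let $\psi:B\to\mathbb R$ be the piecewise-smooth potential whose precise global form is (\ref{psi_global_in_B_eq}); by the reduced-symplectic-area interpretation recalled after (\ref{pi_intro_eq}) it computes the $\omega_{\mathrm{red},\bar q}$-area of the relevant disk in $\mu^{-1}(\bar q)/T^{n-1}\cong\mathbb C$, and its chamberwise behaviour over $B_0$ is controlled by the polytope data $\lambda_i$ of (\ref{polyhedral_P_intro_eq}) through $h_{\mathrm{trop}}$. Following the sketch of Theorem \ref{Main_this_paper_fundamental_example} but with the tropical polynomial $\min\{0,q_1,\dots,q_{n-1}\}$ replaced throughout by $h_{\mathrm{trop}}$, define a topological embedding $j:\mathbb R^n\hookrightarrow\mathbb R^{n+1}$, say $j(\bar q,q_n)=\bigl(\min\{-\psi(\bar q,q_n),-\psi(\bar q,0)\}+h_{\mathrm{trop}}(\bar q),\ \min\{\psi(\bar q,q_n),\psi(\bar q,0)\},\ \bar q\bigr)$, together with a tropically continuous map $F=(F_0,F_1,\val(y_1),\dots,\val(y_{n-1})):Y^{\mathrm{an}}\to\mathbb R^{n+1}$ with $F_0=\min\{\val(x_0),\,-\psi(\val(y_\bullet),0)+h_{\mathrm{trop}}(\val(y_\bullet))\}$ and $F_1=\min\{\val(x_1),\,\psi(\val(y_\bullet),0)\}$, writing $\val(y_\bullet)=(\val(y_1),\dots,\val(y_{n-1}))$. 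The constraint $x_0x_1=h(y_\bullet)$ together with the non-archimedean inequality $\val(h(y_\bullet))\ge h_{\mathrm{trop}}(\val(y_\bullet))$, which is an equality exactly off the tropical hypersurface $\Pi$ (Remark \ref{min_tropical_rmk}), forces $F(\mathscr Y)=j(B)$, so $f:=j^{-1}\circ F|_{\mathscr Y}$ is well-defined and tropically continuous as in \cite[\S 8]{KSAffine}. Differentiating the piecewise-linear relations that define $j$ and $F$ yields the transition maps of $f_0$; a direct computation then shows the non-smooth locus of $f$ is precisely $\Pi\times\{0\}=\Delta$, which is \textbf{(i)}, and that the integral affine structure it induces on $B_0$ agrees with the $\pi$-action coordinates $(\bar\mu,\psi\circ\pi)$, which is \textbf{(ii)}.

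\emph{The Floer side, condition \textbf{(iii)}.} Here I would apply the family Floer machinery of \cite{Yuan_I_FamilyFloer} to attach to $\pi_0:X_0\to B_0$, viewed inside $X$ so that the Maslov-$0$ disks (Figure \ref{figure_area_sing_fiber}, red) are seen, the canonical abstract dual affinoid torus fibration $\pi_0^\vee:X_0^\vee\to B_0$. By Remark \ref{only_place_Floer_intro_rmk}, the only Floer input needed is the identification of $X_0^\vee$ with the adjunction space $T_+\sqcup T_-/\!\sim$ gluing the two affinoid domains $T_\pm\subsetneq(\Lambda^*)^n$ attached to the Clifford- and Chekanov-type chambers, the gluing being computed from the Cho--Oh disk potential \cite{Cho_Oh} corrected by the $\delta_i\in\Lambda_+$ of (\ref{tropical_original_h_introduction_eq}) (which vanish along each non-compact toric stratum, Remark \ref{delta_i_zero_condition_rmk}). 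Transplanting \cite[Lemma 3.1]{GHK_birational} to the non-archimedean setting and combining it with the chart picture of \cite[p.~44]{KSAffine}, one writes down the explicit analytic embedding $g:X_0^\vee\to\Lambda^2\times(\Lambda^*)^{n-1}$ of \S\ref{ss_g_analytic_embedding}, checks that it identifies $X_0^\vee$ with the subdomain $\mathscr Y_0=f^{-1}(B_0)\subset\mathscr Y$, and verifies $\pi_0^\vee=f_0\circ g$ — that is, Proposition \ref{Main_proposition_over_B_0} in the present generality — which closes \textbf{(iii)}.

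The main obstacle is the simultaneous bookkeeping behind \textbf{(ii)}. The integral affine structure coming from $\pi$ depends delicately on $\omega$ through the reduced areas packaged into $\psi$, while the one coming from $f$ depends equally delicately on the valuations of the coefficients $T^{\lambda_i}(1+\delta_i)$ of $h$; as stressed in \S\ref{ss_literature}, matching them is not a formal consequence of matching the monodromies around $\Delta$, so one must track the piecewise-linear structure of $h_{\mathrm{trop}}$ chamber by chamber and confirm that the differentials of $j^{-1}\circ F$ are integral-affine and reproduce the $\pi_0$-cocycle. A secondary, more technical point is to verify that the general toric input — the smooth fan $\Sigma$, the relations $v_{n+a}=\sum_j k_{aj}v_j$ with $\sum_j k_{aj}=1$, and the polytope constants $\lambda_i$ — interacts with the wall-crossing transformations exactly as in the $X=\mathbb C^n$ case, so that the Gross--Hacking--Keel-type gluing reproduces the variety $x_0x_1=h(y_\bullet)$ on the nose rather than a deformation of it; this is precisely where the Calabi--Yau and smoothness hypotheses are consumed.
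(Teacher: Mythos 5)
Your proposal is correct and follows essentially the same route as the paper: \S\ref{s_generalization} proves Theorem \ref{Main_this_paper_general} exactly by rerunning the argument for Theorem \ref{Main_this_paper_fundamental_example} with $\min\{0,q_1,\dots,q_{n-1}\}$ replaced by $h_{\mathrm{trop}}$ in $\psi$, $j$, $F$, and $f$, determining the gluing maps $\Phi_i$ from the Cho--Oh potential (with the $\delta_i$ corrections and the $T_{\bar N}$-symmetry), building the embedding $g$ as in \S\ref{ss_g_analytic_embedding}, and repeating Theorem \ref{fibration_preserving_thm} verbatim to get $\pi_0^\vee=f_0\circ g$. The points you flag as the main obstacles (chamber-by-chamber matching of the affine structures and the interaction of the toric data with the wall-crossing) are precisely the bookkeeping the paper carries out in \S\ref{s_generalization}.
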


The proof is almost identical to that of Theorem \ref{Main_this_paper_fundamental_example}. The key dual singular fibration $f$ is written in the same way as (\ref{explicit_formula_introduction_eq}), replacing the tropical polynomial $\min\{0,q_1,\dots, q_{n-1}\}$ by $h_{\mathrm{trop}}$ (Remark \ref{min_tropical_rmk}).
So, for legibility, we focus on Theorem \ref{Main_this_paper_fundamental_example} in the main body and delay the generalization to \S \ref{s_generalization}.

\subsection{Examples and SYZ converse}

	The statement of Theorem \ref{Main_this_paper_general} gives rise to a lot of examples.
	
\begin{figure}
	\centering
	\includegraphics[scale=0.3]{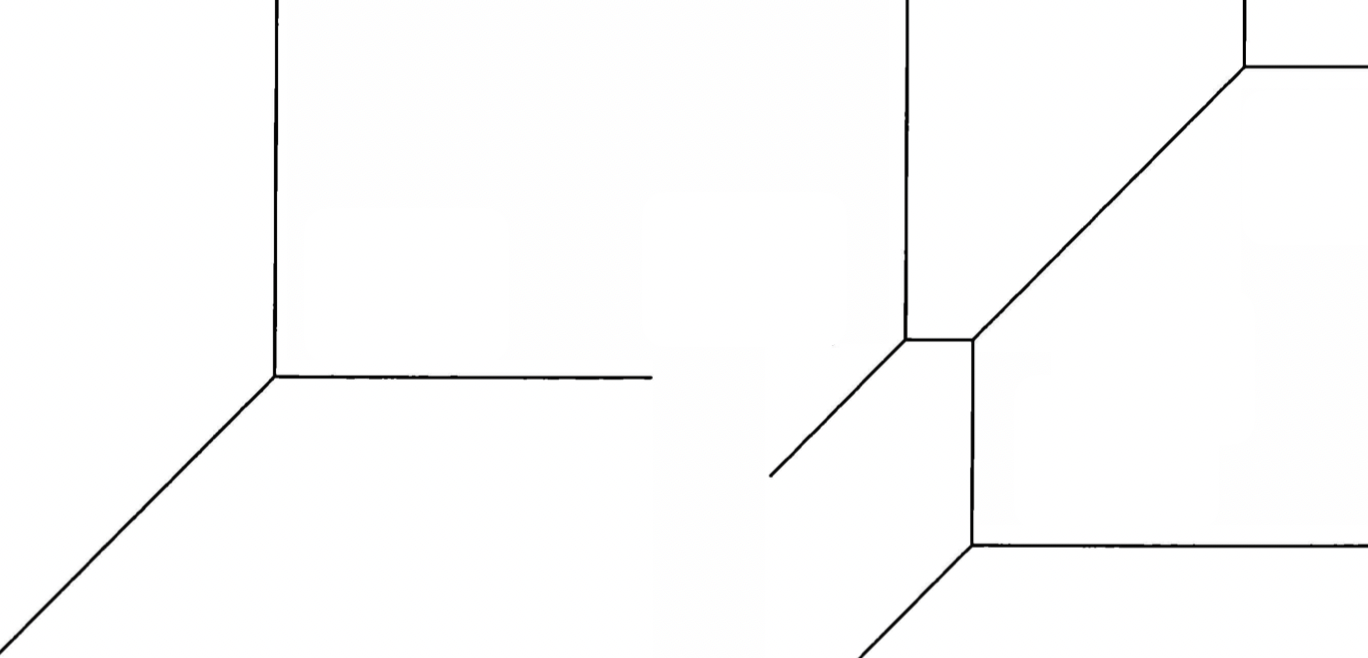}
	\caption{Tropical hypersurfaces of $h_{\mathrm{trop}}$ in \S \ref{sss_trop_Cn} and \S \ref{sss_trop_314} respectively}
	\label{figure_tropical}
\end{figure}
	
\subsubsection{}
	\label{sss_ex_SYZ_converse_intro}
		We begin with a general remark for a version of \textit{SYZ converse}. One can use the Laurent polynomial $h$ to recover the polyhedral $P$ as follows. Consider the polyhedral $P'$ in $\bar M_{\mathbb R}\oplus \mathbb R\cong \mathbb R^{n-1}\oplus \mathbb R$ defined by
		$ q_n+h_{\mathrm{trop}}(q_1,\dots,q_{n-1})\ge 0
		$.
		Namely, it is defined by $q_n+\lambda_n\ge 0$, $q_n+q_s+\lambda_s\ge 0$ ($1\le s<n$), and $q_n+\sum_{s=1}^{n-1} k_{as}q_s+\lambda_{n+a}\ge 0$ ($1\le a\le r$) due to (\ref{tropical_polynomial_introduction_eq}).
		Then, the isomorphism $\bar M_{\mathbb R}\oplus \mathbb R\cong M_{\mathbb R}$,
$(q_1,\dots, q_{n-1},q_n)\mapsto (q_1+q_n,\dots, q_{n-1}+q_n, q_n)$, can naturally identify $P'$ with $P$ in (\ref{polyhedral_P_intro_eq}).

	\subsubsection{}
		\label{sss_trop_Cn}
		Back to Theorem \ref{Main_this_paper_fundamental_example},
we have $r=0$, $\lambda_i=0$, and $\delta_i=0$.
Then, by (\ref{tropical_original_h_introduction_eq}), $h=1+y_1+\cdots+y_{n-1}$, and its tropicalization is $h_{\mathrm{trop}}=\min\{0,q_1,\dots, q_{n-1}\}$; see Figure \ref{figure_tropical}.
Compare also Remark \ref{min_tropical_rmk}.
We can also check the polyhedral $P'\cong P$ is identified with the first quadrant in $\mathbb R^n$.
	
	\subsubsection{}
	Consider the fan $\Sigma$ in $\mathbb R^3$ generated by $v_1=(1,0,0)$, $v_2=(0,1,0)$, $v_3=(0,0,1)$, and $v_4=(1,-1,1)$.
	So, $n=3$ and $r=1$. Note that $v_4=v_1-v_2+v_3$, i.e. $k_{11}=k_{13}=1$ and $k_{12}=-1$.
	The corresponding toric variety is the conifold $\mathcal X_\Sigma=\mathcal O_{\mathbb P^1}(-1)\oplus \mathcal O_{\mathbb P^1}(-1)$. 
We equip $\mathcal X_\Sigma$ with a toric K\"ahler form $\omega$, and the moment polyhedral $P$ is defined by (\ref{polyhedral_P_intro_eq}) for some arbitrary $\lambda_1,\dots, \lambda_4\in\mathbb R$.
	By (\ref{tropical_original_h_introduction_eq}), $h(y_1,y_2)=T^{\lambda_3} +T^{\lambda_1}y_1+T^{\lambda_2}y_2+T^{\lambda_4} y_1 y_2^{-1}$.
	First, we may assume $\lambda_3=0$. Also, replacing $y_i$ by $T^{-\lambda_i}y_i$, we may assume $\lambda_1=\lambda_2=0$.
	So, $(\lambda_1,\lambda_2,\lambda_3,\lambda_4)=(0,0,0, \lambda)$ for some $\lambda\in\mathbb R$, and $h=1+y_1+y_2+T^\lambda y_1y_2^{-1}$. This retrieves the case of \cite[5.3.2]{CLL12}, if we replace the Novikov parameter $T$ by some $t\in\mathbb C$. But, we remark that the shapes of the tropical hypersurfaces of $h_{\mathrm{trop}}$ or the moment polyhedrals $P'\cong P$ obtained by \S \ref{sss_ex_SYZ_converse_intro} are different for $\lambda>0$ and $\lambda<0$, which seems lost over $\mathbb C$.
	
	\subsubsection{} 
	\label{sss_trop_314}
	Consider the Laurent polynomial $h(y_1,y_2)=y_1+T^{-1}y_2+T^{3.14}+T^2y_1^2+y_1y_2+T^2y_2^2$.
	By \S \ref{sss_ex_SYZ_converse_intro}, we get the fan $\Sigma$ in $\mathbb R^3$ generated by $v_1=(1,0,0)$, $v_2=(0,1,0)$, $v_3=(0,0,1)$, $v_4=(2,0,-1)$, $v_5=(1,1,-1)$, $v_6=(0,2,-1)$. Also, we get the polyhedral $P$ defined by (\ref{polyhedral_P_intro_eq}) with respect to these $v_i$'s and the numbers $\lambda_1=0$, $\lambda_2=-1$, $\lambda_3=3.14$, $\lambda_4=2$, $\lambda_5=0$, $\lambda_6=2$. Note that the above $h$ is carefully chosen so that the tropical hypersurface associated to $h_{\mathrm{trop}}$ does not enclose a bounded region (Figure \ref{figure_tropical}, right). This ensures the toric variety $\mathcal X_\Sigma$ has no compact irreducible toric divisor (c.f. Remark \ref{delta_i_zero_condition_rmk}).
	By Theorem \ref{Main_this_paper_general} and \S \ref{sss_ex_SYZ_converse_intro}, we achieve a version of SYZ converse construction from B side to A side. Notice that we truly have infinitely many such examples.

\subsection{Further evidence: a folklore conjecture}
\label{ss_folklore_introduction}
We have extra evidence supporting our proposed SYZ statement for the following folklore conjecture, recognized by Auroux, Kontsevich, and Seidel \cite[\S 6]{AuTDual}:

\begin{mainconj}
	\label{conjecture_folklore}
	The critical values of the mirror Landau-Ginzburg superpotential on $X^\vee$ (B side) are the eigenvalues of the quantum multiplication by the first Chern class on $X$ (A side).
\end{mainconj}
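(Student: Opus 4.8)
The plan is to treat Conjecture~\ref{conjecture_folklore} as an explicit check on the two sides produced above, i.e. on the Landau--Ginzburg model $(Y,W)$ attached by Remark~\ref{Auroux_T_duality_complement_rmk} to $\bar X=\mathcal X_\Sigma$ and on the (suitably defined) quantum cohomology of $\bar X$, carried out over the family of examples in \S\ref{ss_generalizations_introduction}--\S\ref{sss_trop_314}. On the B-side one has $Y=\{x_0x_1=h(y_1,\dots,y_{n-1})\}$ with superpotential $W=x_1$, where $h$ is the corrected Laurent polynomial~\eqref{tropical_original_h_introduction_eq}. First I would locate $\mathrm{Crit}(W|_Y)$ by Lagrange multipliers: a one-line computation shows that a smooth critical point forces $x_1=0$ together with $h(y)=0$ and $\partial_{y_s}h(y)=0$ for all $s$, so the whole problem collapses to the system $\nabla h=0$ over $\Lambda$, and the critical value is read off as $W=x_1=0$. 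Since $\mathcal X_\Sigma$ is Calabi--Yau one has $c_1=0$, hence $c_1\star=0$ as an operator (the divisor axiom kills all quantum corrections), so its only eigenvalue is $0$; thus in the honest Calabi--Yau regime both sides reduce to $\{0\}$ and the conjecture holds trivially --- the actual content is to see that the \emph{interesting} (equivariant, or fibrewise-compactified) refinement is also consistent.

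For that refinement I would, on the A-side, replace ordinary $QH^*$ by the torus-equivariant small quantum cohomology $QH^*_T(\mathcal X_\Sigma)$ (equivalently, by the relative/log invariants along $\mathscr D$, or by symplectic cohomology of $X=\mathcal X_\Sigma\setminus\mathscr D$), so that $c_1^T\neq 0$; using the Batyrev-type presentation of $QH^*_T$ attached to the fan $\Sigma$ and the polyhedral data~\eqref{polyhedral_P_intro_eq}, together with the toric Gromov--Witten/mirror theorems of Givental type, one identifies this ring with the Jacobian ring of the Laurent potential built from the rays of $\Sigma$ --- which is exactly the potential repackaged by $h$ after the coordinate change of Remark~\ref{Auroux_T_duality_complement_rmk}. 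Correspondingly, on the B-side one passes to the fibrewise partial compactification $\bar Y$ of $Y$ (allowing $x_1=\infty$) so that no critical point escapes to the toric boundary $y_s=0$; the count of critical values of $W$ on $\bar Y$ is then complete and, after the same bookkeeping, equals the spectrum of $c_1^T\star$. I would carry this out first for $\bar X=\mathbb C^n$ (Theorem~\ref{Main_this_paper_fundamental_example}), then for $\mathcal X_\Sigma=\mathcal O_{\mathbb P^1}(-1)\oplus\mathcal O_{\mathbb P^1}(-1)$ and the further examples of \S\ref{sss_trop_314}, where by Remark~\ref{delta_i_zero_condition_rmk} all disk corrections $\delta_i$ vanish, so $h$ is the naive Laurent polynomial and both multisets in $\Lambda$ are fully explicit and visibly agree with multiplicities.

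The hard part will be conceptual rather than computational: pinning down the correct A-side invariant. For the non-compact $\mathcal X_\Sigma$ ordinary quantum cohomology is degenerate, so one must commit to the equivariant, relative, or wrapped version, and then verify that the superpotential $W$ it is compared against carries precisely the disk corrections $\delta_i$ of~\eqref{tropical_original_h_introduction_eq} --- which Remark~\ref{delta_i_zero_condition_rmk} determines only when the toric divisor $D_i$ is non-compact. When some $D_i$ is compact, the $\delta_i$ are governed by open Gromov--Witten invariants that Cho--Oh's formula does not evaluate, and the chosen A-side invariant acquires the same unknown, so there Conjecture~\ref{conjecture_folklore} is best read as a compatibility statement between two unknowns rather than a closed identity; the evidence supplied is therefore the (infinite, cf.~\S\ref{sss_trop_314}) class of examples in which every $D_i$ is non-compact, where everything above is computable and the critical values of $W$ are seen to match the eigenvalues of $c_1^T\star$.
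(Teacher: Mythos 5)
There is a genuine gap: you are checking (or sketching a check of) a different statement from the one the paper actually verifies. The paper's evidence for Conjecture \ref{conjecture_folklore} is obtained by keeping the fibration on $X=\mathcal X_\Sigma\setminus\mathscr D$ fixed but placing it in a \emph{compact Fano} ambient space $\overline X$ (e.g.\ $\overline X=\mathbb {CP}^n$ or $\mathbb {CP}^m\times\mathbb {CP}^{n-m}$): by the Maslov-0 determinism of \S\ref{ss_Maslov_zero_determine} the mirror space $Y$ and the dual fibration are unchanged, but the superpotential acquires new Maslov-2 terms, e.g.\ $W^\vee=x_1+T^{E(\mathcal H)}x_0^n/(y_1\cdots y_{n-1})$ as in (\ref{W_single_CPn}), and its critical values $(n+1)T^{E(\mathcal H)/(n+1)}e^{2\pi i s/(n+1)}$ are matched against the $c_1$-eigenvalues of the honest quantum cohomology $QH^*(\overline X;\Lambda)$ over the Novikov field (\S\ref{s_folklore}). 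Your proposal never considers these compactifications; you stay with the Calabi--Yau $\mathcal X_\Sigma$ itself, where $W=x_1$ (Remark \ref{Auroux_T_duality_complement_rmk}) and where, as you concede, there is no well-defined A-side spectrum to compare with, so the claim that ``both sides reduce to $\{0\}$'' has no precise meaning. It is also false at face value on the B side: your own Lagrange-multiplier computation shows a critical point requires $h=0$ and $\nabla h=0$ simultaneously, and for the fundamental example $h=1+y_1+\cdots+y_{n-1}$ the derivatives never vanish (likewise for generic $\lambda_i$ in the general case), so $\mathrm{Crit}(W|_Y)$ is \emph{empty} and the set of critical values is $\emptyset$, not $\{0\}$.

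The equivariant/relative refinement you then outline ($QH^*_T$, symplectic cohomology, Givental--Batyrev presentations, a fibrewise compactification $\bar Y$ allowing $x_1=\infty$) replaces $c_1$ by $c_1^T$ acting on a different module, so it is a different assertion from Conjecture \ref{conjecture_folklore}, and none of its key steps (the mirror-theorem identification with the Jacobian ring of $h$, completeness of the critical locus on $\bar Y$, the matching of spectra) is carried out; your closing remark that in the compact-divisor case the statement is ``a compatibility between two unknowns'' confirms this is a program, not a proof. By contrast the paper's verification is a direct computation: it determines $W^\vee_\pm$ in the two affinoid charts $\tau_\pm$ via Cho--Oh on the Clifford side and the wall-crossing property (\ref{superpotential_gluing_map_eq}) on the Chekanov side, solves for the critical points over $\Lambda$ (finding them all in a single dual fiber), and checks the critical values against the known $c_1$-eigenvalues of $QH^*(\mathbb {CP}^n;\Lambda)$ and $QH^*(\mathbb {CP}^m\times\mathbb {CP}^{n-m};\Lambda)$. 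To repair your route you would need to (i) choose a compactification $\overline X$ with $c_1\neq 0$ and well-defined quantum cohomology, (ii) verify that no new Maslov-0 disks appear, and (iii) recompute $W^\vee$ with the new Maslov-2 classes --- which is precisely what the paper does.
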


Recall the dual affinoid torus fibration $\pi_0^\vee:X_0^\vee \to B_0$ only relies on the Maslov-0 holomorphic disks in $X$ bounded by $\pi_0$-fibers.
It often happens that the $\pi_0$ can be placed in a larger ambient space $\overline X$, adding more Maslov-2 holomorphic disks but adding no Maslov-0 ones.
By the general theory in \cite{Yuan_I_FamilyFloer},
the family Floer mirror associated to the same fibration $\pi_0$, placed in the larger $\overline X$ yet, is given by the same analytic space $X_0^\vee$ and the same affinoid torus fibration $\pi_0^\vee: X_0^\vee \to B_0$ but equipped with an additional function $W_0^\vee$ on $X_0^\vee$ (\S \ref{ss_Maslov_zero_determine}).
Note that the $W_0^\vee$ as well as its critical points and critical values all depend on the K\"ahler form $\omega$. When we deform $\omega$, the image of these critical points under the fibration map $\pi$ may wildly change.
But, in the present paper, we only focus on the examples and refer to \cite{Yuan_c_1} for the general theory.

Recall $\mathscr Y_0=f_0^{-1}(B_0)\cong X_0^\vee$ via the analytic embedding $g$ in (\ref{g_embedding_intro_eq}).
In our case, the LG superpotential turns out to be polynomial, and from Definition \ref{SYZmirror_defn} it follows that $\mathscr Y_0$ is Zariski dense in the algebraic variety $Y$ (cf. \cite{payne2009fibers}). Thus, the $W_0^\vee$ extends to a polynomial superpotential $W^\vee$ on the whole $Y$. 
Note that the $W^\vee$ relies on the larger ambient space $\overline X$, although the $Y$ does not.
For various ambient spaces $\overline X$, we have various $W^\vee$.
The computations in \cite{Yuan_e.g._FamilyFloer} over the Novikov field $\Lambda= \mathbb C((T^{\mathbb R}))$ rather than just over $\mathbb C$ (eg. \cite{PT_mutation}) is quite crucial to verify Conjecture \ref{conjecture_folklore} here. The details of computations will be in Appendix \S \ref{s_folklore}, and we just write down the results here:

\begin{enumerate}
\item Suppose $X=\mathbb C^n\setminus\{z_1\cdots z_n=1\}$ and $\overline X=\mathbb {CP}^n$. Then, the LG superpotential is 
\[
W^\vee = x_1+\frac{T^{E(\mathcal H)}  x_0^n}{ y_1\cdots y_{n-1}}
\]
defined on the algebraic variety $Y=\{x_0x_1=1+y_1+\cdots+y_{n-1}\}$,
where $\mathcal H$ is the class of a complex line and $E(\mathcal H)=\frac{1}{2\pi}\omega\cap \mathcal H$ is the symplectic area.
By direct computations, one can check the critical points of this $W^\vee$ (for the logarithmic derivatives) are
\[
\begin{cases}
	x_0 =  T^{-\frac{E(\mathcal H)}{n+1}} e^{-\frac{2\pi i s}{n+1}}	&		\\
	x_1 = 	n T^{\frac{E(\mathcal H)}{n+1}	} e^{\frac{2\pi i s}{n+1}} & 	\\
	y_1=\cdots=y_{n-1}=1
\end{cases}
\qquad s\in\{0,1,\dots, n\}
\]
They are all contained in the same dual fiber over the base point that can be in either the Clifford or Chekanov chambers, relying on $\omega$ and $\pi$. 
Moreover, one can check
the critical values are 
\[
(n+1) T^{\frac{E(\mathcal H)}{n+1}} e^{\frac{2\pi i s}{n+1}}, \qquad s\in\{0,1,\dots, n\}
\]
which agrees with the $c_1$-eigenvalues in the quantum cohomology $QH^*(\mathbb {CP}^n; \Lambda)$.

\item Suppose $X=\mathbb C^n\setminus \{z_1\cdots z_n=1\}$ and $\overline X=\mathbb {CP}^m\times  \mathbb {CP}^{n-m}$ for $0<m<n$. Then,
\[
W^\vee
=
x_1 +  \frac{T^{E(\mathcal H_1)} x_0^m}{y_1\cdots y_m} +  \frac{T^{E(\mathcal H_2)} x_0^{n-m}}{ y_{m+1}\cdots y_{n-1}}
\]
on the same $Y$,
where $\mathcal H_1, \mathcal H_2$ are the classes of a complex line in $\mathbb {CP}^m$ and $\mathbb {CP}^{n-m}$ respectively.
The corresponding critical values are
\[
(m+1) T^{\frac{E(\mathcal H_1)}{m+1}} e^{\frac{2\pi i r}{m+1}} + (n-m+1) T^{\frac{E(\mathcal H_2)}{n-m+1}} e^{\frac{2\pi i s}{n-m+1}} 
\]
for $r\in\{0,1,\dots, m\}$ and $s\in\{0,1,\dots, n-m\}$ and agree with the $c_1$-eigenvalues.
Moreover, we want to further study the locations of the critical points of $W^\vee$.
For simplicity, let's assume $n=2$ and $m=1$, then
$
W^\vee=x_1+ \frac{T^{E(\mathcal H_1)} x_0}{y} +T^{E(\mathcal H_2)} x_0
$
is defined on $Y=\{x_0x_1=1+y\}$.
We have four critical points of $W^\vee$ given by
\[
\begin{cases}
	x_0=\pm	T^{\frac{-E(\mathcal H_2)}{2}}  	 \\
	x_1= \pm T^{\frac{E(\mathcal H_1)}{2}} \pm T^{\frac{E(\mathcal H_2)}{2}} \\
	y_1= \pm T^{\frac{E(\mathcal H_1)-E(\mathcal H_2)}{2}}
\end{cases}
\]
They are contained in the $f$-fiber over the base point
$
\hat q=\left(\frac{E(\mathcal H_1)-E(\mathcal H_2)}{2}, a_\omega \right)
$
in $B\equiv \mathbb R^2$ where $a_\omega$ is some value that relies on the K\"ahler form $\omega$.
\end{enumerate}

We really obtain infinitely many LG superpotentials on $Y$ parameterized by the various K\"ahler forms, and all of them will satisfy the folklore conjecture.
In the case (2) above, it may happen that $E(\mathcal H_1)\neq E(\mathcal H_2)$ while $a_\omega=0$, then the base point $\hat q$ meets the wall.
In general, the base points of critical points rely on $\omega$ or $\pi$, and the walls of Maslov-0 disks rely on $J$.
The family Floer non-archimedean analytic structure does not rely on $J$ up to isomorphism \cite{Yuan_I_FamilyFloer}. For instance, the formula of $f$ in (\ref{explicit_formula_introduction_eq}) clearly does not rely on $J$ and cannot detect the walls of Maslov-0 disks (relying on $J$).
To sum up, the Maslov-0 correction is usually unavoidable (cf. \cite[\S 5]{Auroux2015infinitely} \cite[Example 3.3.2]{Au_Special}) and gives rise to some non-archimedean analytic structure which needs to be remembered in the Floer theory along the way.
Given non-trivial Maslov-0 disks, all the previous arguments for Conjecture \ref{conjecture_folklore} will fail.
A major new challenge is that the LG superpotential is now locally only well-defined up to affinoid algebra isomorphisms, or more precisely up to family Floer transition maps in the language of \cite{Yuan_I_FamilyFloer}.
Fortunately, based on the ud-homotopy and canceling tricks in \cite{Yuan_I_FamilyFloer}, a conceptual proof of Conjecture \ref{conjecture_folklore} with Maslov-0 corrections has been achieved in \cite{Yuan_c_1}.
Admittedly, its value may rely on future examples we would find, but there are clearly other examples by Theorem \ref{Main_this_paper_general} with similar computations.
 The new ideas in \cite{Yuan_c_1} will also inspire future studies, such as a more global picture of the closed-string mirror symmetry that asserts the quantum cohomology of $X$ is isomorphic to the Jacobian ring of $W^\vee$ (cf. \cite{FOOO_bookblue}).


\subsection*{Acknowledgment}
We thank Mohammed Abouzaid for encouraging discussions about \cite{Abouzaid_Sylvan}.
We thank Denis Auroux and Yingdi Qin for some key ideas for Lemma \ref{symplectic_area_increasing_lem}.
We thank Antoine Chambert-Loir for a correspondence that explains \cite{Formes_Chambert_2012}, especially the notion of the tropically continuous map.
We thank Jonathan David Evans for a helpful correspondence about \cite{evans_2021_book}.
We thank Kenji Fukaya and Eric Zaslow for their interests,  conversations, and supports.
We thank Siu-Cheong Lau for a useful correspondence about \cite{hong2018immersed}.
We thank Yang Li for his interest and useful discussion.
We thank Enrica Mazzon and Yueqiao Wu for patient explanations of the basics of Berkovich's theory.
We thank Sam Payne for a helpful explanation of \cite{payne2009fibers}.
We thank Tony Yue Yu for an insightful suggestion of Section 8 of Kontsevich-Soibelman's paper \cite{KSAffine} and for many useful lessons of non-archimedean geometry.

\section{A side: the Gross Lagrangian fibration}
\label{s_A_side}

\subsection{Lagrangian fibration}
\label{ss_Gross_fib}

We begin with the Gross's construction in \cite{Gross_ex}.
Define the divisors $D_j=\{z_j=0\}$ ($1\le \ell\le n$) and $\mathscr D=\{z_1z_2\cdots z_n=1\}$ in $\mathbb C^n$. We set $X=\mathbb C^n\setminus \mathscr D$.
Consider the $T^{n-1}$-action given by
\begin{equation}
	\label{S_1_action}
	t \cdot (z_1,\dots, z_n) \mapsto (z_1,\dots, z_{k-1}, e^{it} z_k, z_{k+1},\dots, z_{n-1}, e^{-it} z_n) \qquad 1\le k<n
\end{equation}
for $t\in S^1\cong \mathbb R/2\pi \mathbb Z$.
Take the standard symplectic form $\omega=d\lambda$ on $\mathbb C^n$. Let
$
\bar\mu=(h_1,\dots, h_{n-1}): \mathbb C^n\to\mathbb R^{n-1}
$
be a moment map for the $T^{n-1}$-action in (\ref{S_1_action}). One can check the vector fields
\[
\textstyle
\mathfrak X_k= i \big(\bar z_k\frac{\partial}{\partial\bar z_k } - z_k\frac{\partial}{\partial z_k}\big)
-
i \big(\bar z_n\frac{\partial}{\partial\bar z_n } - z_n\frac{\partial}{\partial z_n}\big)
\]
satisfy
$
\iota(\mathfrak X_k)\omega = d h_k
$
for the Hamiltonian functions
\[
h_k(z)=\tfrac{1}{2} \big(|z_k|^2- |z_n|^2 \big) \qquad 1\le k<n
\]
Define $h_n(z)=\log |z_1\cdots z_n-1|$ and $B=\mathbb R^n$.
The \textit{Gross Lagrangian fibration} (with respect to the holomorphic $n$-form $\Omega=(z_1\cdots z_n-1)^{-1} dz_1\wedge \cdots \wedge dz_n$, c.f. \cite{Gross_ex}) refers to the following:
\[
\pi\equiv (\bar\mu, h_n): X \to  B, \quad  (z_1,\dots, z_n)\mapsto 
\big(
h_1,\dots, h_{n-1} , h_n
\big)
\]
Note that the $\pi$ maps $X$ onto $B$.
Note also that the $\pi$-fibers are preserved by the complex conjugation $z_i\mapsto \bar z_i$.
The $\pi$-fiber over $q\in B$ is denoted by $L_q:=\pi^{-1}(q)$, and we write 
\[
q=(\bar q, q_n)=(q_1,\dots, q_{n-1},q_n)
\]
for the standard Euclidean coordinates in $B \equiv \mathbb R^n$.
Let
$
P_{ij}=\{z_i=z_j=0\}=D_i\cap D_j
$ and $\Delta_{ij}=\pi(P_{ij})$.
The \textbf{discriminant locus} of $\pi$ is precisely $\Delta=\bigcup\Delta_{ij}$, and the smooth locus of $\pi$ is $B_0:=B\setminus \Delta$. We denote the restriction of $\pi$ over $B_0$ by:
\[
	\pi_0:=\pi|_{B_0} :X_0\to B_0
\]
where $X_0:=\pi_0^{-1}(B_0)\subset X$. 
Moreover, we actually have $\Delta=\Pi\times \{0\}$, where $\Pi$ is the tropical hyperplane that consists of those $\bar q \in\mathbb R^{n-1}$ such that $\min\{0,q_1,\dots, q_{n-1}\}$ is attained twice.
The tropical hyperplane $\Pi$ separates the subset $H: =(\mathbb R^{n-1}\setminus \Pi)\times \{0\}$ of the base $B$ into $n$ different connected components 
\begin{equation}
	\label{H_k_wall_eq}
	H_k= \{ \bar q \in \mathbb R^{n-1}\setminus \Pi \mid \min (0,q_1,\dots, q_{n-1})=q_k\}
\end{equation}
for $1\le k<n$ and 
\[
H_n=\{ \bar q \in\mathbb  R^{n-1}\setminus \Pi \mid \min(0,q_1,\dots, q_{n-1}) =0\}
\]
For example, when $n=3$, we have $H_1=\{q_1<0, q_1<q_2\}$, $H_2=\{q_2<0, q_1>q_2\}$, and $H_3=\{q_1>0,q_2>0\}$ (see the left side of Figure \ref{figure_tropical}).
When $n=2$, we have $H_1=(-\infty, 0)$ and $H_2=(0,+\infty)$.
Notice that $\bar H_i\cap \bar H_j=\Delta_{ij}=\pi(D_i\cap D_j)$, and $H_j\subset \pi(D_j)$ for $1\le j\le n$.

We usually call the subset $H$ in $B_0$ the \textbf{wall}, because
a torus fiber $L_q$ over $q=(\bar q, q_n)\in B_0$ bounds a non-trivial Maslov index zero holomorphic disk if and only if $q_n=0$, i.e. $q\in H$ (see e.g. \cite[Example 3.3.1]{Au_Special} or \cite[\S 5]{AuTDual}).
Next, we define
\[
B_+=\{q=(q_1,\dots, q_n )\mid q_n>0\}=\mathbb R^{n-1}\times (0,+\infty)
\]
\[
B_-=\{q=(q_1, \dots, q_n)\mid q_n<0\} =\mathbb R^{n-1}\times (-\infty,0)
\]
Then, $B_0=B_+ \sqcup B_-\sqcup \bigsqcup_{j=1}^n H_j$.
We call the $B_+$ (resp. $B_-$) the chamber of Clifford tori (resp. Chekanov tori).
We say the torus fiber $L_q$ is of \textit{Clifford type} if $q_n>0$ and it is of \textit{Chekanov type} if $q_n<0$ which is introduced in \cite{Chekanov1996LagrangianTI, eliashberg1997problem} and many others.

The Arnold-Liouville's theorem implies the existence of the {action-angle coordinates} near any Lagrangian torus fiber.
Let $(M,\omega)$ be a symplectic manifold of dimension $2n$.
Let $M_U\subset M$ be an open subset.
The following two theorems are standard; see \cite{action_angle} and \cite{evans_2021_book}.

\begin{thm}
	\label{action_angle_coordinates_thm}
	Let $\pi=(H_1,\dots, H_n) : M_U  \to U$ be an integrable Hamiltonian system over a contractible domain $U$ with only smooth torus fibers. Then, there exists a diffeomorphism $\chi: U \to V\subset \mathbb R^n$ such that $\chi \circ  \pi$ generates a Hamiltonian torus action on $M$. In other words, there is a local coordinate system $(I, \alpha)=(I_1,\dots, I_n,\alpha_1,\dots,\alpha_n):  \pi^{-1}(U) \to U\times (\mathbb R/ 2\pi \mathbb Z)^n$ such that $I=\chi\circ \pi$ and $\omega=\sum_{i=1}^n dI_i\wedge d\alpha_i$ on $\pi^{-1}(U)$.
\end{thm}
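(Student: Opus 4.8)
The plan is to run the classical construction of action--angle coordinates, taking advantage of the fact that here the fibers are already assumed to be tori, so I need not re-derive the Arnold--Liouville conclusion that compact connected fibers of an integrable system are tori.

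First I would build the fiberwise torus action. Since $\pi=(H_1,\dots,H_n)$ is integrable, the $H_i$ Poisson-commute, so the Hamiltonian vector fields $X_{H_1},\dots,X_{H_n}$ commute, are everywhere tangent to the fibers of $\pi$, and are pointwise linearly independent (the fibers are Lagrangian, hence $n$-dimensional). Their joint flow gives a locally free action of $\mathbb R^n$ on each fiber $\pi^{-1}(q)$; since $\pi^{-1}(q)$ is a compact connected $n$-torus, this action is transitive with stabilizer a full-rank \emph{period lattice} $\Lambda_q\subset\mathbb R^n$. The $\Lambda_q$ assemble into a smooth lattice subbundle of $U\times\mathbb R^n$, and because $U$ is contractible I may choose a smooth frame $\lambda_1(q),\dots,\lambda_n(q)$ of the $\Lambda_q$ and, likewise, a $\mathbb Z$-basis $\gamma_1(q),\dots,\gamma_n(q)$ of $H_1(\pi^{-1}(q);\mathbb Z)$ varying smoothly in $q$.

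Next I would produce the action variables. After shrinking $U$ I may write $\omega=d\theta$ for a primitive $1$-form $\theta$ on $\pi^{-1}(U)$, and I set
\[
I_i(q)=\frac{1}{2\pi}\oint_{\gamma_i(q)}\theta .
\]
One checks that $dI_i$ is independent of the choice of primitive (because the fibers are Lagrangian) and that $\chi=(I_1,\dots,I_n):U\to\mathbb R^n$ is a local diffeomorphism: transporting the cycles by the joint flow of $X_{H_1},\dots,X_{H_n}$ and applying Stokes expresses the Jacobian of $\chi$ through the period lattice, so it is invertible. Shrinking $U$ again, I obtain $\chi:U\to V$. Then I would verify that $\chi\circ\pi=(I_1,\dots,I_n)$ generates a Hamiltonian torus action: writing $X_{I_i}=\sum_k a_{ik}(q)\,X_{H_k}$, the normalization $\oint_{\gamma_i}\theta=2\pi I_i$ forces the time-$2\pi$ flow of each $X_{I_i}$ to be the identity, so the $I_i$ define a free Hamiltonian $T^n$-action whose orbits are exactly the $\pi$-fibers. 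Finally, using contractibility of $U$ to pick a section $\sigma:U\to\pi^{-1}(U)$ and a standard correction to make it Lagrangian, I define $\alpha=(\alpha_1,\dots,\alpha_n):\pi^{-1}(U)\to(\mathbb R/2\pi\mathbb Z)^n$ by requiring $\alpha$ to vanish on $\sigma(U)$ and to be translated by the torus action; a direct computation using $\iota_{X_{I_i}}\omega=dI_i$, that $\omega$ vanishes on the (Lagrangian) fibers, and that $\sigma$ is Lagrangian yields $\omega=\sum_i dI_i\wedge d\alpha_i$ on $\pi^{-1}(U)$.

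The main obstacle I expect lies in the two verification steps on the action variables: showing that $\chi$ is a local diffeomorphism (the Stokes-type identity linking $dI_i$ to the commuting flows and the period lattice) and that the flows of the $X_{I_i}$ are $2\pi$-periodic, i.e. that the action variables genuinely generate a $T^n$-action rather than merely an $\mathbb R^n$-action. Producing a Lagrangian section to pin down the angle variables is a minor additional point over a contractible base, and the identity $\omega=\sum dI_i\wedge d\alpha_i$ is then bookkeeping with Poisson brackets and Stokes' theorem.
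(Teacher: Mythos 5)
Your sketch is the classical Arnold--Liouville construction (commuting Hamiltonian flows, period lattice, action variables $I_i=\frac{1}{2\pi}\oint_{\gamma_i(q)}\theta$, the $2\pi$-periodicity check, and angle variables built from a Lagrangian section over the contractible base), and it is correct in outline; the paper itself gives no proof of this theorem, explicitly calling it standard and deferring to the cited references, which carry out exactly this argument. The steps you single out as the real work --- smoothness of the period lattice in $q$, invertibility of the Jacobian of $\chi$ via the period matrix, and the fact that the flows of the $X_{I_i}$ close up after time $2\pi$ --- are indeed where the content lies, and they are supplied in those references rather than in the paper.
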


We call $I=(I_1,\dots, I_n)$ and $\alpha=(\alpha_1,\dots,\alpha_n)$ the \textit{action-angle coordinates}.
Two sets of them differ by an integral affine transformation.
If $\pi:M_U\to U$ is a smooth Lagrangian torus fibration in $M_U\subset M$ over a contractible domain $U$, then a set of the action coordinates is decided by a choice of the $\mathbb Z$-basis of $H_1(\pi^{-1}(q); \mathbb Z)$ for some $q\in U$. In reality, we have the following.

\begin{thm}
	\label{action_coordinate_thm}
	Let $\lambda$ be some 1-form in $M_U$ such that $\omega=d\lambda$.
	Assume $\sigma_i=\sigma_i(q)$ are closed curves in the torus fiber $\pi^{-1}(q)\cong T^n$, depending smoothly on $q$, such that they form a basis in $H_1(\pi^{-1}(q); \mathbb Z)$. Then
	$\psi_i=  \frac{1}{2\pi} \int_{\sigma_i} \lambda$ defines a diffeomorphism $\chi=(\psi_1,\dots,\psi_n):U\to \mathbb R^n$ such that a set of action coordinates on $M_U$ are given by $I_i=\psi_i\circ \pi$.
\end{thm}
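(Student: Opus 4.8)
The plan is to bootstrap off Theorem~\ref{action_angle_coordinates_thm}, which already supplies \emph{some} action--angle coordinates $(I',\alpha')$ on $M_U$: a diffeomorphism $\chi'\colon U\to V'\subset\mathbb R^n$ with $I'=\chi'\circ\pi$, angle variables $\alpha'\colon M_U\to(\mathbb R/2\pi\mathbb Z)^n$, and $\omega=\sum_k dI'_k\wedge d\alpha'_k$. I would then show that the functions $\psi_i$ of the statement differ from $\chi'$ by an \emph{integral} affine transformation; since the action coordinates produced by Theorem~\ref{action_angle_coordinates_thm} are exactly those of the form $\chi\circ\pi$, this proves the claim.

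First I would note that the chosen primitive $\lambda$ is irrelevant up to an additive constant. Put $\lambda_0:=\sum_k I'_k\,d\alpha'_k$, the primitive adapted to $(I',\alpha')$; then $d\lambda_0=\omega=d\lambda$, so $\lambda-\lambda_0$ is closed on $M_U$ and $\oint_{\sigma_i(q)}(\lambda-\lambda_0)=\big\langle[\lambda-\lambda_0],[\sigma_i(q)]\big\rangle$ is locally constant in $q$, hence constant on the connected domain $U$. Thus $\psi_i=\tfrac1{2\pi}\oint_{\sigma_i}\lambda=\tfrac1{2\pi}\oint_{\sigma_i}\lambda_0+c_i$ for constants $c_i\in\mathbb R$.

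Next I would evaluate $\tfrac1{2\pi}\oint_{\sigma_i(q)}\lambda_0$. On the Lagrangian fiber $L_q$ one has $\lambda_0|_{L_q}=\sum_k I'_k(q)\,d\alpha'_k|_{L_q}$, so
\[
\psi_i(q)=\sum_{k=1}^n m^i_k\,I'_k(q)+c_i,\qquad
m^i_k:=\tfrac1{2\pi}\oint_{\sigma_i(q)}d\alpha'_k\in\mathbb Z .
\]
Writing $[C_k]\in H_1(L_q;\mathbb Z)$ for the classes of the $\alpha'$-coordinate circles, one has $[\sigma_i(q)]=\sum_k m^i_k\,[C_k]$; the integers $m^i_k$ are independent of $q$ because the class $[\sigma_i(q)]$ varies continuously in the homology of the torus bundle, and, since both $\{[\sigma_i(q)]\}_i$ and $\{[C_k]\}_k$ are $\mathbb Z$-bases of $H_1(L_q;\mathbb Z)$, the matrix $(m^i_k)$ lies in $GL_n(\mathbb Z)$. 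Consequently $\chi=(\psi_1,\dots,\psi_n)=(m^i_k)\,\chi'+(c_i)$ is the composition of $\chi'$ with an affine automorphism of $\mathbb R^n$, hence a diffeomorphism onto an open $V\subset\mathbb R^n$; and $I_i=\psi_i\circ\pi=\sum_k m^i_k\,I'_k+c_i$ has $X_{I_i}=\sum_k m^i_k\,X_{I'_k}$, whose time-$2\pi$ flow is the identity since $(m^i_k)$ is integral and each $X_{I'_k}$ has $2\pi$-periodic flow. Hence $I=\chi\circ\pi$ generates a Hamiltonian $(\mathbb R/2\pi\mathbb Z)^n$-action, i.e.\ the $I_i$ are action coordinates.

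The only points requiring care rather than bookkeeping are the two ``constant in $q$'' assertions (for the $c_i$ and for the matrix $(m^i_k)$), which rest on connectedness of $U$ and on the fact that a continuously varying integral homology class on a torus bundle is locally constant; one could alternatively avoid the adapted primitive and argue directly that $2\pi\,d\psi_i$ is a section of the period lattice bundle, via a Stokes computation on the cylinder swept out by a one-parameter family of the loops $\sigma_i$. In either route the genuine geometric input — the flat affine structure on the fibers and the existence of action--angle coordinates — is already packaged in Theorem~\ref{action_angle_coordinates_thm}, so no serious obstacle remains; the lemma is essentially a normalization statement identifying \emph{which} action coordinates the formula $\psi_i=\tfrac1{2\pi}\int_{\sigma_i}\lambda$ produces.
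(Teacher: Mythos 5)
The paper does not actually prove this statement: Theorems \ref{action_angle_coordinates_thm} and \ref{action_coordinate_thm} are quoted as standard facts with references to the action--angle literature, so there is no in-text proof to compare against. Your argument is correct and is essentially the standard one: reduce to some action--angle system $(I',\alpha')$ supplied by Theorem \ref{action_angle_coordinates_thm}; observe that changing the primitive $\lambda$ only shifts each flux by a constant (your closed-form/Stokes argument is fine since $U$ is connected, indeed contractible, and $\sigma_i(q)$ varies smoothly); compute $\tfrac{1}{2\pi}\oint_{\sigma_i}\sum_k I'_k\,d\alpha'_k=\sum_k m^i_k I'_k(q)$ with $(m^i_k)$ constant in $q$ and lying in $GL_n(\mathbb Z)$ because both $\{\sigma_i(q)\}$ and the coordinate circles are $\mathbb Z$-bases of $H_1(L_q;\mathbb Z)$; conclude that $\chi$ differs from $\chi'$ by an integral affine transformation, hence is a diffeomorphism onto its image. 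The only step I would tighten is the last one: instead of routing through ``$I=\chi\circ\pi$ generates a Hamiltonian torus action, hence gives action coordinates'' (which leans on the ``in other words'' clause of Theorem \ref{action_angle_coordinates_thm}), you can exhibit the complementary angles directly as $\alpha=(A^{T})^{-1}\alpha'$ with $A=(m^i_k)\in GL_n(\mathbb Z)$, for which $\sum_i dI_i\wedge d\alpha_i=\sum_k dI'_k\wedge d\alpha'_k=\omega$ is immediate and well-definedness mod $2\pi$ follows from integrality of $(A^{T})^{-1}$. Either way the proof is complete and consistent with the references the paper cites.
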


We will call such a $\chi$ an \textit{integral affine chart}.
If we pick a different $\lambda'$ with $d\lambda'=d\lambda$, then by Stokes' theorem, $\int_{\sigma_i}(\lambda'-\lambda)$ is constant, and the action coordinates just differ by constants.
The $\psi_i$'s in Theorem \ref{action_coordinate_thm} are also called the \textit{flux maps}.
The formula for a set of action coordinates can be complicated in general (c.f. \cite{vu2003semi}, see also \cite[Theorem 6.7]{evans_2021_book}). A key idea used in this paper is that the action coordinates can be often described by linear combinations of symplectic areas of disks in some (possibly larger) ambient space; see \S \ref{ss_action_coordinates_-} below.
The advantage is that one can intuitively see an integral affine transformation in terms of disk bubbling.

\subsection{Topological disks}
\label{ss_topological_disk}
Now, we go back to our situation.
To connect with the Floer theory, we are interested in whether the loops in $\pi_1(L_q)$ can be realized as (linear combinations of) the boundaries of disks in $X=\mathbb C^n\setminus \mathscr D$ or in other reasonable larger ambient symplectic manifold $\overline X$, e.g. $\mathbb C^n$ or $\mathbb {CP}^n$.
Beware that at this moment, we just focus on topological disks rather than holomorphic disks.
We consider the following local systems over $B_0$:
\begin{equation}
	\label{local_system_H_2_H_1_eq}
	\mathscr R_1:=R^1\pi_*(\mathbb Z)\equiv \bigcup_{q\in B_0} \pi_1(L_q), \qquad  \mathscr R_2 :=\mathscr R_2(\overline X):=\bigcup_{q\in B_0} \pi_2( \overline X,L_q)
\end{equation}

	\textit{We must understand the monodromy behavior of the above local systems across multiple components of the walls} (cf. Figure \ref{figure_tropical}). In contrast, most of existing literature only study these disks across a single component.
	Although the latter is sufficient for mirror \textit{space} identifications (cf. \cite{AAK_blowup_toric,CLL12} and \S \ref{ss_literature}), we must understand the former monodromy data for mirror \textit{fibration} realizations.
	Note also that the monodromy for $\mathscr R_2$ contains more information than the monodromy for $\mathscr R_1$ or the integral affine structure.
	This justifies why we need to provide many additional details here, although it may be not so difficult.

The local system $\mathscr R_2=\mathscr R_2(\overline X)$ generally depends on the ambient space $\overline X$. But, we will often make this point implicit.
Taking the boundaries gives rise to a morphism $\partial: \mathscr R_2\to\mathscr R_1$.
A class in $\pi_1(L_q)$ or $\pi_2(\overline X,L_q)$ determines a local section of $\mathscr R_1$ or $\mathscr R_2$ over some contractible domain that contains $q$.
Recall that we have a natural Hamiltonian $T^{n-1}$-action that preserves the Lagrangian torus fibers. Then, for $q\in B_0$, we use $\sigma_k=\sigma_k(q)$ ($1\le k\le n-1$) to denote the class of the orbit of the $S^1$-action in (\ref{S_1_action}). They can be regarded as the global sections of $\mathscr R_1$.

To avoid the monodromy ambiguity, we will work with a covering of the base manifold $B_0$ by the {contractible} domains, such as
$B_+\cup H_\ell\cup B_-$
for $1\le \ell\le n$.
Let $\mathscr N_\ell$ denote a sufficiently small neighborhood of $H_\ell$ in the smooth locus $B_0$. 
The following two contractible domains can cover $B_0$:
\[
\textstyle
B'_\pm := B_\pm \cup \bigsqcup_{\ell=1}^n \mathscr N_\ell
\]

Whenever $q\in B_+$, the Lagrangian torus fiber $L_q$ can deform into a product torus by a Hamiltonian isotopy inside $(\mathbb C^*)^n$.
So, $\pi_2(\mathbb C^n, L_q)\cong \pi_2(\mathbb C^n, (\mathbb C^*)^n )\cong \mathbb Z^n$ has an obvious basis $\beta_1,\dots,\beta_n$ such that $\beta_i\cdot D_j=\delta_{ij}$.
We can represent $\partial\beta_j$ by a loop in the form $t\mapsto  (z_1^0,\dots, z_j^0 e^{it},\dots, z_n^0)$ in the product torus.
But abusing the notations, we still write $\partial\beta_j$ (resp. $\beta_j$) for the induced local sections of $\mathscr R_1$ (resp. $\mathscr R_2$) over $B'_+$.
To emphasize the base point, we may write $\partial\beta_j(q)$ and $\beta_j(q)$.
By (\ref{S_1_action}), we have 
\begin{equation}
	\label{sigma_k_in_B_+_eq}
\sigma_k|_{B'_+}= \partial\beta_k -\partial\beta_n \qquad 1\le k<n
\end{equation}

On the other hand, there is a preferred local section of $\mathscr R_2$ over $B_-$, denoted by
$
\hat\beta: B_- \to \mathscr R_2
$, such that
any Maslov index 2 holomorphic disk $u$ bounding $L_{q}$ for $q\in B_-$ must have $[u]=\hat\beta(q)$; see e.g. \cite[Proposition 4.2. C]{eliashberg1997problem}, \cite[Lemma 4.31]{CLL12}, or \cite{Yuan_e.g._FamilyFloer}.
It can be represented by a section of $z_1\cdots z_n$ over a disk in $\mathbb C$. Note that $\hat\beta\cdot D_i=0$ for all $i$.
Abusing the notation, we still denote by $\hat\beta$ the extension section of $\mathscr R_2$ over $B'_-$.
By studying the intersection numbers with the various divisors, it is direct to show the following purely topological result (see e.g. \cite[Lemma B.7]{Yuan_e.g._FamilyFloer}):
\begin{equation}
	\label{beta_ell_beta_hat_eq}
	\beta_\ell|_{\mathscr N_\ell}=\hat\beta|_{\mathscr N_\ell}
\end{equation}

\begin{convention}
	\label{energy_convention}
	From now on, we will adopt a slightly different convention compared to our previous works \cite{Yuan_I_FamilyFloer, Yuan_e.g._FamilyFloer, Yuan_c_1}: We define $E(\beta)=\frac{1}{2\pi} \omega\cap \beta$ instead of $E(\beta)=\omega\cap \beta$ for $\beta\in\pi_2(X,L)$.
\end{convention}

\subsection{Action coordinates}
\label{ss_action_coordinates_-}
Recall that 
$
q=(q_1,\dots, q_n)=:(\bar q, q_n)
$
are the standard coordinates on $B$, but we aim to find the different action coordinates.
On the one hand, a natural trivialization of $\mathscr R_1|_{B'_-}$ is given by the sections 
\[
\{\sigma_k|_{B'_-}: 1\le k<n \}\cup \{\partial\hat\beta\}
\]
By Theorem \ref{action_coordinate_thm}, they give rise to a set of the action coordinates in $B'_-$ by the flux maps.
The first $n-1$ fluxes are given by
$
	\frac{1}{2\pi} \int_{\sigma_k} \lambda  = \frac{1}{2}  ( |z_k|^2- |z_n|^2 ) = h_k(z)=q_k
$. The last flux for $\partial\hat\beta=\partial\hat\beta(q)$ is hard to make explicit, but it can have a geometric meaning by the Stokes' formula:
\[
\textstyle \psi_-(q):=\frac{1}{2\pi} \int_{\partial\hat\beta} \lambda =\frac{1}{2\pi} \int_{\hat\beta} d\lambda= E(\hat\beta)
\]
which is regarded as a map $\psi_-: B'_-\to \mathbb R$.
To sum up, we get an integral affine coordinate chart
\begin{equation}
	\label{chi_-_eq}
	\chi_- : B'_- \to \mathbb R^n ,\qquad q\mapsto  (q_1,\dots, q_{n-1}, \psi_-(q) ) = (\bar q, \psi_-(q))
\end{equation}

On the other hand, a natural trivialization of $\mathscr R_1|_{B'_+}$ is given by
\[
\{\sigma_k|_{B'_+}: 1\le k<n\} \cup\{\partial\beta_n\}
\]
Recall that $\sigma_k|_{B'_+}=\partial\beta_k-\partial\beta_n$ for $1\le k<n$.
The first $n-1$ fluxes are as above, and moreover
\begin{equation}
	\label{E_beta_k_beta_n_q_k}
	\textstyle
	E(\beta_k) -  E(\beta_n)
	= 
	\frac{1}{2\pi} \int_{\beta_k-\beta_n} d\lambda
	=\frac{1}{2\pi} \int_{\partial\beta_k-\partial\beta_n} \lambda=
	\frac{1}{2\pi } \int_{\sigma_k} \lambda  
	=
	q_k
\end{equation}
Define 
$
\psi_+(q)= E(\beta_n): B'_+\to\mathbb R
$.
By Theorem \ref{action_coordinate_thm}, the associated integral affine coordinate chart is 
\begin{equation}
	\label{chi_+_eq}
	\chi_+ : B'_+ \to \mathbb R^n, \qquad q \mapsto \big(q_1, \dots,  q_{n-1}, \psi_+(q) \big) = (\bar q, \psi_+(q))
\end{equation}

We can use (\ref{beta_ell_beta_hat_eq}) and (\ref{E_beta_k_beta_n_q_k}) to conclude that $\psi_+(q)=\psi_-(q)$ on $\mathscr N_n$ and $\psi_+(q)+q_\ell=\psi_-(q)$ on $\mathscr N_\ell$ for $\ell\neq n$ respectively.
By (\ref{H_k_wall_eq}), this means that for 
$
\textstyle
q\in B'_+\cap B'_-\equiv \bigsqcup_\ell \mathscr N_\ell
$,
\begin{equation}
	\label{psi_+_psi_-_min_eq}
	\psi_-(q)
	=
	\psi_+(q)+\min\{0,q_1,\dots, q_{n-1}\} 
\end{equation}
The two charts $\chi_\pm$ give rise to an integral affine structure on $B_0$ which is different from the standard Euclidean one in $B=\mathbb R^n$. 
In particular, the former integral affine structure cannot be extended to $B$.
Define
$
\upchi_\ell:=\chi_-\circ \chi_+^{-1}: \ \chi_+(\mathscr N_\ell) \to \chi_-(\mathscr N_\ell)
$ for each $\ell$, and we can check 
\begin{equation}
	\label{integral_affine_trans_ell_eq}
	\upchi_\ell(c_1,\dots, c_n) = (c_1,\dots, c_{n-1}, c_n+\min\{0,c_1,\dots, c_{n-1}\} )
\end{equation}


\subsection{The embedding $j$}
\label{ss_j_homeomorphism}

We aim to embed the integral affine manifold with singularities $B$ into a higher-dimensional Euclidean space $\mathbb R^{n+1}$. This is actually inspired by the work of Kontsevich-Soibelman (c.f. \cite[p27 \& p45]{KSAffine}). To some degree, this helps us to visualize the integral affine structure.

\subsubsection{Symplectic area. }
Thanks to (\ref{psi_+_psi_-_min_eq}), we have a well-defined continuous function on $B_0$ given by
\begin{equation}
	\label{psi_global_in_B_eq}
	\psi(q)=\psi(\bar q, q_n) =
	\begin{cases}
		\psi_+(q)+\min\{0,q_1,\dots, q_{n-1}\}, & q\in B'_+ \\
		\psi_-(q) , & q\in B'_- 
	\end{cases}
\end{equation}
If $q\in B'_+$, then $\psi(q)=\min\{E(\beta_1),\dots, E(\beta_n)\}$; if $q\in B'_-$, then $\psi(q)=E(\hat\beta)$.
The existence of the Lagrangian fibration with singularities over the whole $B=\mathbb R^n$ tells that the $\psi$ can extend continuously from $B_0$ to $B$, still denoted by 
$\psi: B \to\mathbb R
$ (c.f. the yellow disks in Figure \ref{figure_area_sing_fiber}).
The value of $\psi(q)$ can be represented by the symplectic area of a \textit{topological} disk
\[
u=u(q):  (\mathbb D, \partial\mathbb D)
\to
(\mathbb C^n, L_q)
\]
by $\psi(q)= \frac{1}{2\pi} \int u^*\omega$ (c.f. Convention \ref{energy_convention}).
It is not necessarily a holomorphic disk at this moment, and the symplectic area is purely topological.

\begin{lem}
	\label{symplectic_area_increasing_lem}
	The function $q_n\mapsto \psi(\bar q, q_n)$ is an increasing diffeomorphism from $\mathbb R$ to $(0,\infty)$.
\end{lem}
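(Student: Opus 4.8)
The plan is to analyze the function $q_n \mapsto \psi(\bar q, q_n)$ on each of the three regimes $q_n > 0$, $q_n < 0$, and $q_n = 0$ separately, identify the one-variable derivative with a genuine symplectic area computation in the reduced space, and then glue. Fix $\bar q \in \mathbb R^{n-1}$ throughout. The first $n-1$ components of $\pi$ form the moment map $\bar\mu$ for the Hamiltonian $T^{n-1}$-action of \eqref{S_1_action}, so the symplectic reduction $\bar\mu^{-1}(\bar q)/T^{n-1}$ is identified, via $w = z_1\cdots z_n - 1$, with a copy of $\mathbb C$ carrying a reduced symplectic form $\omega_{\mathrm{red},\bar q}$; the remaining coordinate of $\pi$ is $q_n = \log|w|$. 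Under this picture $\psi(\bar q, q_n)$ is (up to the $\tfrac{1}{2\pi}$ normalization of Convention \ref{energy_convention}) the $\omega_{\mathrm{red},\bar q}$-area of the disk in $\mathbb C$ of ``radius'' $e^{q_n}$ centered appropriately — on $B'_+$ the disk enclosing the puncture-free region cut out by $\beta_n$, on $B'_-$ the disk swept by $\hat\beta$ — so I would first make this identification precise using \eqref{psi_global_in_B_eq} and the definitions of $\psi_\pm$.

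Granting that, monotonicity and smoothness on $q_n > 0$ and $q_n < 0$ are immediate: increasing $q_n$ enlarges the reduced disk, and since $\omega_{\mathrm{red},\bar q}$ is a positive area form on $\mathbb C$, the enclosed area is a strictly increasing smooth function of $q_n$ on each open half-line, with derivative equal to the $\omega_{\mathrm{red},\bar q}$-length (integral of the angular part of the area form) of the boundary circle, which is positive. For smoothness \emph{across} $q_n = 0$ I would invoke \eqref{psi_+_psi_-_min_eq}: on a neighborhood $\mathscr N_\ell$ of $H_\ell$ we have $\psi_-(q) = \psi_+(q) + \min\{0, q_1,\dots,q_{n-1}\}$, and on that component the minimum is the constant $q_\ell$ (or $0$ for $\ell = n$) in $q_n$, hence $\psi(\bar q, \cdot)$ is given near $q_n = 0$ by the single smooth function $\psi_-(\bar q, \cdot)$ (equivalently $\psi_+(\bar q,\cdot)$ plus a $q_n$-independent constant), so no corner appears \emph{in the $q_n$-direction} even though $\psi$ has corners in the $\bar q$-directions along $\Delta$. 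This also handles $\bar q \in \Pi$ itself by the same local formula, the point being that the non-smoothness of $\psi$ is transverse to the $q_n$-axis.

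For the range statement I need the two limits: $\psi(\bar q, q_n) \to 0$ as $q_n \to -\infty$ and $\psi(\bar q, q_n) \to +\infty$ as $q_n \to +\infty$. The first is a small-disk estimate: as $q_n \to -\infty$ the reduced disk shrinks to the point $w = 0$ (i.e.\ to the divisor $\mathscr D$, which has been removed but whose neighborhood controls the area), so its $\omega_{\mathrm{red},\bar q}$-area tends to $0$; concretely $\psi = E(\hat\beta) \to 0$ because $\hat\beta$ degenerates. The second is a completeness/properness statement: since $\pi$ is a fibration onto all of $B = \mathbb R^n$ and $\psi$ is a proper continuous monotone function of $q_n$ on each half-line, the area must exhaust $(0,\infty)$; alternatively one computes directly that the reduced area of $\{|w| \le e^{q_n}\}$ diverges as $q_n \to \infty$ because $\omega_{\mathrm{red},\bar q}$ has infinite total mass on $\mathbb C$ (the reduced space being noncompact, coming from $X = \mathbb C^n \setminus \mathscr D$ rather than a compact toric variety). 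Combining strict monotonicity, smoothness, and the two limits gives that $q_n \mapsto \psi(\bar q, q_n)$ is an increasing diffeomorphism $\mathbb R \xrightarrow{\sim} (0,\infty)$.

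The main obstacle I anticipate is the regularity across $q_n = 0$ together with the endpoint behavior as $q_n \to -\infty$: one must be careful that the topological disk $u(q)$ of \eqref{psi_global_in_B_eq} varies smoothly through the wall and that its area does not pick up a jump from the Maslov-$0$ disk bubbling that occurs exactly on $H$. The clean way around this is precisely to not work with $u(q)$ directly but to push everything into the reduced space $\mathbb C$, where $\psi(\bar q, q_n)$ becomes the honest area of $\{|w| < e^{q_n}\}$ against a fixed smooth area form depending only on $\bar q$ — a manifestly smooth, strictly increasing function of $q_n$ with the stated limits. The identification of $\psi_\pm$ with this reduced area (for which one uses $\iota(\mathfrak X_k)\omega = dh_k$ and Stokes as in \S\ref{ss_action_coordinates_-}, plus \eqref{beta_ell_beta_hat_eq}) is the one genuinely technical ingredient; once it is in place the lemma is essentially calculus on $\mathbb R$.
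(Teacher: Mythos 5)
Your proposal is correct and follows essentially the same route as the paper: the paper's proof also passes to the symplectic reduction $\bar\mu^{-1}(\bar q)/T^{n-1}\cong\mathbb C$ via the Lefschetz map, identifies $2\pi\psi(\bar q,q_n)$ with the reduced area of the disk $\{|w-1|\le e^{q_n}\}$, gets monotonicity from the inclusion of these disks (handling non-regular $\bar q$ by approximating with regular values, rather than by your wall-crossing formula argument), and concludes with the same two limits $0$ and $\infty$. The reduced-space picture you fall back on in your last paragraph is precisely the paper's argument, so no essential gap remains.
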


\begin{proof}
We first show the monotonicity. We may assume that $\bar q$ is a regular value of $\bar \mu$ (otherwise, since we only compare the symplectic areas, we may take a sequence of regular values approaching it).
	We study the symplectic reduction space $\Sigma:=\bar\mu^{-1}(\bar q) / T^{n-1}$. It has real dimension 2 and is endowed with the reduction form $\omega_{red}$. If we write $i: \bar\mu^{-1}(\bar q)\xhookrightarrow{} \mathbb C^n$ and $p: \bar\mu^{-1}(\bar q) \to \Sigma$ for the inclusion and quotient maps, then $i^*\omega=p^*\omega_{red}$.
	The Lefschetz fibration map $w=z_1\cdots z_n$ is invariant under the $T^{n-1}$ action and naturally induces a diffeomorphism $v: \Sigma \to\mathbb C$ with $w\circ i=v\circ p$.
	Note that $L_q$ is contained in $\bar\mu^{-1}(\bar q)$, and the topological disk $u$ is a section of $w$. Up to a homotopy in $\mathbb C^n$ (relative to $L_q$), we may require $u$ has its image contained in $\bar\mu^{-1}(\bar q)$ as well.
	Recall $h_n=\log| w-1|$, so $|w-1|=e^{q_n}$ on $L_q$.
	Now, we have the following maps of topological space-pairs:
	\[
	(\mathbb D, \partial\mathbb D)
	\xrightarrow{u} (\bar\mu^{-1}(\bar q), L_q)
	\xrightarrow{p} (\Sigma,  p(L_q) ) \xrightarrow[\cong]{v} (\mathbb C, \partial D(q_n))
	\]
	where $p(L_q)$ is identified via $v$ with the boundary circle of the disk $D(q_n)=\{\zeta\in\mathbb C\mid |\zeta-1|\le e^{q_n}\}$.
	Let $\check\omega$ is a symplectic form on $\mathbb C$ determined by $v^*\check\omega=\omega_{red}$. Then, $\int u^*\omega = \int u^* p^*\omega_{red} = \int u^* p^* v^* \check \omega$.  In other words, $2\pi \psi(q)=\int u^*\omega$ can be viewed as the symplectic area of the disk $D(q_n)$ in $(\mathbb C, \check \omega)$. No matter how $\check\omega$ looks like, we have the subset inclusion $D(q^1_n)\subset D(q^2_n)$ if $q^1_n<q^2_n$, so the $\check\omega$-area of $D(q_n)$ is increasing in $q_n$. Namely, this says $\psi(\bar q,q_n)$ is increasing in $q_n$.
	Finally, notice that we also have $\lim_{q_n\to \infty} \psi(\bar q, q_n)=\infty$ and $\lim_{q_n\to -\infty}\psi(\bar q, q_n)=0$.
\end{proof}

Recall that $q=(\bar q, q_n)$. We write
\begin{equation}
	\label{psi_0_eq}
	\psi_0(\bar q)=\psi(\bar q,0)
\end{equation}
and we define a continuous embedding
\begin{equation}
	\label{j_homeo_eq}
	j: B \to \mathbb R^{n+1} \qquad q\mapsto (\theta_0(q), \  \theta_1(q), \  \bar q )
\end{equation}
where
\begin{equation}
	\label{theta_0_1_eq}
	\begin{aligned}
	\theta_0(q_1,\dots, q_n)
		&:=
		 \min\{ -\psi(q) , -\psi_0( \bar q ) \}  + \min \{0, \bar q \} 
		 \\
	\theta_1(q_1,\dots, q_n) 
	&:=
	\min\{\ \ \ \psi(q) ,  \ \ \ \psi_0( \bar q )\}
\end{aligned}
\end{equation}
are continuous maps from $\mathbb R^n$ to $\mathbb R$. Using Lemma \ref{symplectic_area_increasing_lem} implies that $j$ is injective. The manifold structure on $B$ induces a one on the image $j(B)$.
Later, we will see the motivation behind $j$ in Section \ref{s_B_side}.

\subsubsection{Description of the image of $j$. }
\label{sss_describe_j}
We think of $B\equiv \mathbb R^n$ as the union of the $\bar q$-slices for all $\bar q\in \mathbb R^{n-1}$.
Notice that the map $j$ is `slice-preserving' in the sense that the following diagram commutes
\[
\xymatrix{
	B\equiv \mathbb R^n \, \, \ar@{^{(}->}[rr]^{j} \ar[dr] &  &  \mathbb R^{n+1} \ar[dl]
	\\
	& \mathbb R^{n-1}&
}
\]
where the left vertical arrow is $(\bar q, q_n)\mapsto \bar q$ and the right one is $(u_0,u_1,\bar q)\mapsto \bar q$.
Thus, we just need to understand the restriction of $j$ on a fixed slice $\bar q\times \mathbb R$ composed with the projection $\mathbb R^{n+1}\cong \mathbb R^2\times \mathbb R^{n-1}_{\bar q} \to\mathbb R^2$.
After taking $\psi(\bar q, \cdot) :\mathbb R\cong (0,+\infty)$ in Lemma \ref{symplectic_area_increasing_lem}, this amounts to study the induced map
\begin{equation}
	\label{jmath_eq}
r_{\bar q}: (0,+\infty) \to \mathbb R^2
\end{equation}
defined by 
\[
(0,+\infty)\ni c\mapsto (\min\{0,\bar q\}+ \min\{-c, -\psi_0(\bar q)\} , \min\{c, \psi_0(\bar q)\})
\]
Here $c$ represents $\psi(\bar q, q_n)$.
The image of $r_{\bar q}$ is a (half) broken line, denoted by $R_{\bar q}$, in $\mathbb R^2$.
Define 
\begin{equation}
	\label{a_01_eq}
	A=A(\bar q)= (a_0(\bar q), a_1(\bar q) ) := \big( \min\{0,\bar q\}-\psi_0(\bar q)  \ , \  \psi_0(\bar q)  \ \ \big)
\end{equation}
to be the corner point of $R_{\bar q}$ parameterized by $\bar q\in\mathbb R^{n-1}$.
Note that $(\bar q,0)\in \Delta$ if and only if $\bar q\in \Pi$.
Note also that $a_1(\bar q)=\psi_0(\bar q)>0$, so this broken line $R_{\bar q}$ always contains the corner point $A(\bar q)$.

\begin{figure}
	\centering
	\includegraphics[scale=0.33]{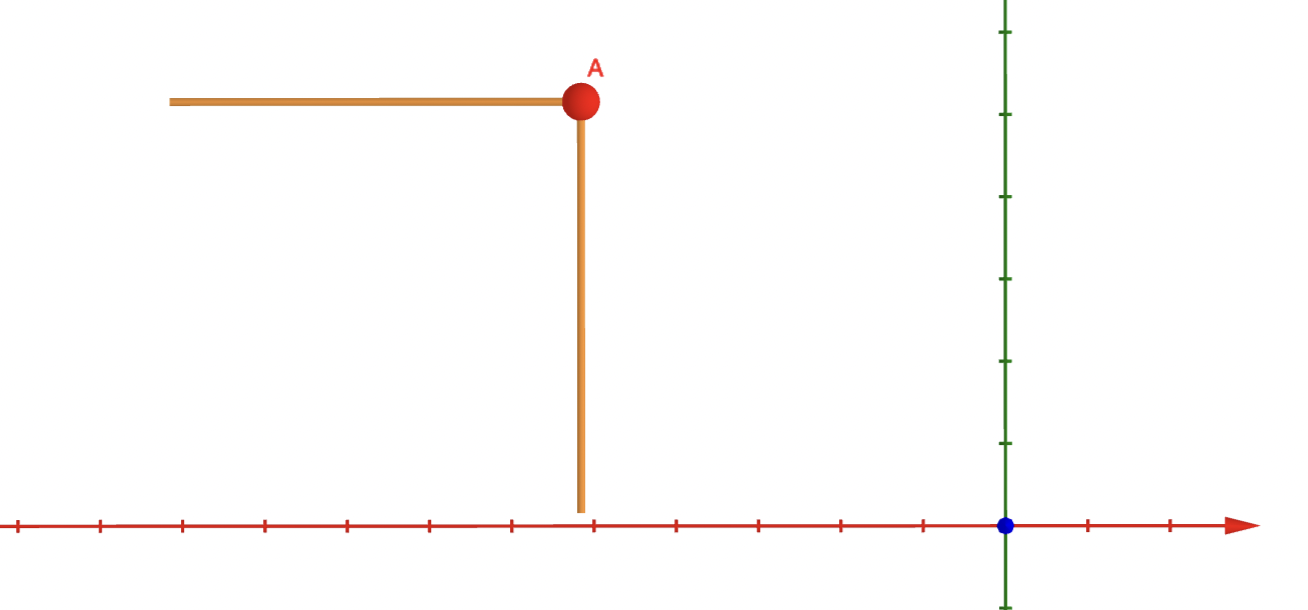}
	\caption{The corner point $A=A(\bar q)$ in the broken line $R_{\bar q}$}
	\label{figure_broken_line}
\end{figure}

\begin{rmk}
	\label{drawing_rmk}
	Intuitively, we may imagine drawing $R_{\bar q}$ in $\mathbb R^2$ with a pen as follows: as $c$ decreases from $+\infty$ to $0+$, we draw from $(-\infty,  a_1(\bar q) )$ horizontally to the corner point $A(\bar q)$ (see Figure \ref{figure_broken_line}).
	Then, we turn the pen and continue drawing vertically downwards until $(a_0(\bar q), 0)$, as $c\to 0+$.
\end{rmk}

The image $j(B)$ can be identified with the graph in $\mathbb R^2\times \mathbb R^{n-1}$ of a family of broken lines $R_{\bar q}$ in $\mathbb R^2$ parameterized by $\bar q \in\mathbb R^{n-1}$ (see Figure \ref{figure_visualize_image_j} in the introduction).
The image $j(\Delta)$ of the singular locus $\Delta\equiv \Pi\times \{0\}$ consists of the corner points in $R_{\bar q}$ for those $\bar q$ in the tropical hyperplane $\Pi$.
To sum up,
\begin{equation}
	\label{j(B)_explicit}
	j(B)=\bigcup_{\bar q\in\mathbb R^{n-1}} R_{\bar q}\times \{\bar q\}, \,  \qquad
	j(\Delta)=\{ (A(\bar q), \bar q) \mid \bar q\in \Pi \}
\end{equation}
In particular, when $n=2$, the $j(\Delta)$ only consists of a single point (i.e. the blue point $S$ in Figure \ref{figure_visualize_image_j}).

\begin{rmk}
	The `singular locus' $j(\Delta)$ in $j(B)$ is curved just like \cite[p27]{KSAffine}. Something similar is also considered in \cite[Example 3.9]{Lag_3_torus}.
	Intuitively, we start the cuts at points on a graph of a \textit{continuous} function (rather than an affine one) and make the gluing in a one-dimension-higher Euclidean space.
\end{rmk}

\section{B side: Kontsevich-Soibelman's analytic fibration}
\label{s_B_side}

In this section, we aim to develop an `analytic torus fibration with singularities' on the algebraic $\Lambda$-variety $Y$ such that its smooth part becomes the \textit{affinoid torus fibration} in the sense of \cite[\S 4.1]{KSAffine}.

\subsection{Tropicalization map}
In this paper, we exclusively consider the \textit{Novikov field} $\Lambda=\mathbb C((T^{\mathbb R}))$, a non-archimedean field that consists of all the infinite series $\sum_{i=0}^\infty a_i T^{\lambda_i}$ where $a_i\in\mathbb C$, $T$ is a formal symbol, and $\{\lambda_i\}$ is a divergent strictly-increasing sequence in $\mathbb R$. 
It has a non-archimedean valuation 
\[
\val: \Lambda \to \mathbb R\cup \{\infty\}
\]
defined by sending the above series to the smallest $\lambda_i$ with $a_i\neq 0$ and sending the zero series to $\infty$. It is equivalent to the non-archimedean norm defined by $|x|=\exp(-\val(x))$.
The multiplicative group is the subset $U_{\Lambda}=\val^{-1}(0)=\{x\in\Lambda\mid |x|=1\}$ that resembles the subgroup $U(1)\equiv S^1$ in $\mathbb C^*$.

Consider the \textit{tropicalization map}
\begin{equation}
	\label{trop_map_eq}
	\trop: (\Lambda^*)^n\to\mathbb R^n, \qquad (z_i)\mapsto (\val(z_i))
\end{equation}

	It is a continuous map with respect to the analytic topology in $(\Lambda^*)^n$ and the Euclidean topology. Note that a fiber of $\trop$ is simply a copy of $U_\Lambda^n\equiv \trop^{-1}(0)$ up to a translation $y_i\mapsto T^{c_i}y_i$; c.f. (\ref{X_0_vee_set_eq}).

\subsection{Non-archimedean integrable system}
\label{ss_NA_integrable_affinoid_torus}

Following Kontsevich and Soibelman \cite[\S 4]{KSAffine}, we introduce an analog of the notion of an integrable system in the non-archimedean analytic setting.

Let $\mathcal Y$ be an analytic space over $\Lambda$ of dimension $n$, and let $B$ be an $n$-dimensional topological manifold or a CW complex.
Let $f: \mathcal Y\to B$ a proper continuous map with respect to the analytic topology and Euclidean topology.
We call a point $p \in B$ \textit{smooth} (or \textit{$f$-smooth}) if there is a neighborhood $U$ of $p$ in $B$ such that the fibration $f^{-1}(U)\to U$ is isomorphic to $\trop^{-1}(V)\to V$ for some open subset $V\subset \mathbb R^n$. 
Here $f^{-1}(U)\cong \trop^{-1}(V)$ is an isomorphism of $\Lambda$-analytic spaces while $U\cong V$ is a homeomorphism.
\[
\xymatrix{
	f^{-1}(U) \ar[r] \ar[d]^{f}  & \trop^{-1}(V) \ar[d]^{\trop}   \\ 
	U\ar[r] & V
}
\]
Let's call it an \textit{affinoid tropical chart}, which may be also viewed as a \textit{tropical chart} in the language of Chambert-Loir and Ducros \cite[(3.1.2)]{Formes_Chambert_2012}.

Let $B_0$ denote the open subset of $f$-smooth points of $B$. 
We call $f$ an \textbf{\textit{affinoid torus fibration}} if $B_0=B$ (see \cite[\S 3.3]{NA_nonarchimedean_SYZ}). In general, we only have $B_0\subsetneq B$, but the restriction of $f$ over $B_0$, denoted by
$
f_0: f^{-1}(B_0)\to B_0,
$
 is always an affinoid torus fibration simply by definition.

The following construction is greatly influenced by Kontsevich-Soibelman \cite[\S 8]{KSAffine}. But, we have to substantially generalize and modify it for our T-duality purpose in the sense of Definition \ref{SYZmirror_defn}.
Let
\[
\psi_0:\mathbb R^ {n-1} \to  (0,+\infty)
\]
be a continuous function.
In practice, we choose $\psi_0(\bar q)=\psi(\bar q,0)$ to be the one in (\ref{psi_global_in_B_eq}, \ref{psi_0_eq}); in this case, $\psi_0>0$ as the symplectic area of a holomorphic disk.

\begin{rmk}
	\label{pure_NA_rmk}
This section can be considered in the pure non-archimedean world. The constructions still hold if the Novikov field $\Lambda$ is replaced by another non-archimedean field.
The existence of affinoid torus fibrations is also a central topic in non-archimedean geometry, and such research is very sparse \cite{KSAffine,NA_nonarchimedean_SYZ}. Now, we give a large class of \textbf{\textit{new}} examples inspired by symplectic methods and SYZ picture.
\end{rmk}

Recall that the $\Lambda$-algebraic variety $Y$ is given by the equation
$
x_0 x_1=1+y_1+\cdots+y_{n-1}
$
in $\Lambda^2_{(x_0,x_1)}\times (\Lambda^*)^{n-1}_{(y_1,\dots,y_{n-1})}$.
As said, we will not always distinguish $Y$ and its analytification.
We define:
\begin{equation}
	\label{F_KS_eq}
	F=(F_0,F_1; G_1,\dots, G_{n-1}):  Y\to \mathbb R^{n+1}
\end{equation}
as follows: given $z=(x_0,x_1,y_1,\dots,y_{n-1})$, we set 
\[
\begin{aligned}
	F_0(z)&=\min\{  
	\val(x_0),   -\psi_0(\val(y_1),\dots, \val(y_{n-1}) )+\min\{0,\val(y_1),\dots, \val(y_{n-1})\}  \} \\
	F_1(z)&= \min \{ \val(x_1), \ \ \ \psi_0(\val(y_1),\dots, \val(y_{n-1})) \} \\
	G_k(z)&=\val(y_k) \qquad \text{for} \  1\le k< n
\end{aligned}
\]
This is a \textit{\textbf{tropically continuous}} map in the sense of Chambert-Loir and Ducros \cite[(3.1.6)]{Formes_Chambert_2012}.
Roughly, this means $F$ locally takes the form $\varphi(\val(f_1),\dots, \val(f_n))$ where $f_i$'s are local invertible analytic functions and $\varphi:U\to\mathbb R^m$ is a continuous map for the Euclidean topology for some open subset $U$ of $\mathbb R^n$.
By adding other constraints on the $\varphi$, one may define the notion of tropically piecewise-linear / $C^k$, etc.

\subsubsection{Description of the image of $F$. }
\label{sss_describe_F}

Fix $\bar q=(q_1,\dots, q_{n-1})$, and define
\[
S_{\bar q}:=\{(u_0,u_1)\in\mathbb R^2\mid (u_0,u_1,\bar q)\in F(Y)\}
\]
In other words, the image of $Y$ in $\mathbb R^{n+1}\equiv \mathbb R^2\times \mathbb R^{n-1}$ under $F$ is given by
\begin{equation}
	\label{F(Y)_mathfrak_B_eq}
\mathfrak B:=F(Y)=\bigcup_{\bar q\in\mathbb R^{n-1}} S_{\bar q}\times \{\bar q\} 
\end{equation}

It suffices to describe each $S_{\bar q}$. 
Just like \S \ref{ss_Gross_fib}, let $\Pi\subset \mathbb R^{n-1}$ be the tropical hyperplane associated to $\min\{0,\bar q\}$, consisting of those points for which $\min\{0,\bar q\}$ is attained twice.
Just as (\ref{a_01_eq}), we define
\begin{equation*}
	a_0(\bar q)=\min\{0,\bar q\}-\psi_0(\bar q) \qquad \text{and} \qquad a_1(\bar q)=\psi_0(\bar q)
\end{equation*}

Let $z=(x_0,x_1,y_1,\dots, y_{n-1})$ be an arbitrary point in $Y$ with $q_k:=\val(y_k)$ for $1\le k<n$. We write $p=(u_0,u_1,\bar q)=F(z) $ for the image point.

\begin{enumerate} 
	\item [(i)]
Assume $\bar q\notin \Pi$. Then, $\val(x_0)+\val(x_1)=\val(1+y_1+\cdots +y_{n-1})=\min\{0,\bar q\}$. Eliminating $\val(x_0)$, we get
$
F_0(z)= \min\{0,\bar q\}  +\min \{-\val(x_1), -\psi_0(\bar q)\}
$
and $F_1(z)=\min\{\val(x_1), \psi_0(\bar q)\}$.
Hence, 
the $S_{\bar q}$ is simply the image of the broken line in $\mathbb R^2$ given by
\begin{equation}
	\label{broken_line_s_barq_eq} 
s_{\bar q}:  \mathbb R\to\mathbb R^2 , \quad c\mapsto \big( \min \{0,\bar q\}+ \min\{-c, -\psi_0(\bar q)\} \ \ , \quad
\min\{c,\psi_0(\bar q)\}
\big)
\end{equation}
with a corner point at $(a_0(\bar q), a_1(\bar q))$. Here $c$ represents $\val(x_1)$.
We take a small neighborhood $\bar V$ of $\bar q$ in the complement of $\Pi$ in $\mathbb R^{n-1}$.
Given the $c$ with $s_{\bar q}(c)=(u_0,u_1)$ and a sufficiently small $\epsilon>0$, we can find a neighborhood $U$ of $p$ in $\mathfrak B$ that is homeomorphic to $V:=(c-\epsilon, c+\epsilon)\times \bar V$ in $\mathbb R^n$ via the various $s_{\bar q'}$ for $\bar q'\in \bar V$. Then, under this identification $U\cong V$, 
$F^{-1}(U)$ is isomorphic to $\trop^{-1}(V)$ by forgetting $x_0$, i.e. $z\mapsto (x_1, y_1,\dots, y_{n-1})$. In this way,
the fibration $F$ also agrees with $\trop$.
In conclusion, this means $p$ is an \textit{$F$-smooth point} in the sense of \S \ref{ss_NA_integrable_affinoid_torus}.

\item[(ii)]  Assume $\bar q\in \Pi$.
Recall $F_0(z)=\min\{ \val(x_0), a_0(\bar q)\}$ and $F_1(z)=\min\{\val(x_1),a_1(\bar q)\}$.
As $\val(x_0)+\val(x_1)\ge \min\{\val(1),\val(y_1),\dots,\val(y_{n-1})\}=a_0(\bar q)+a_1(\bar q)$, one of the following cases must hold:

\begin{enumerate}
	\item [(ii-a)] If $\val(x_0)<a_0(\bar q)$, then $\val(x_1)>a_1(\bar q)$. Hence, $F_0(z)=\val(x_0) \equiv u_0$, and $F_1(z)=a_1(\bar q)$.
	Find a neighborhood $U$ of $p$ in $\mathfrak B$ in the form $U=\{(u_0', a_1(\bar q'), \bar q') \mid  u_0' \in I, \bar q'\in \bar V\}$, where a neighborhood $\bar V$ of $\bar q$ and an open interval $I$ centered at $u_0$ are both chosen small enough so that $u_0'<a_0(\bar q')$ always holds. Let $V:=I\times \bar V \cong U$, and $F^{-1}(U)$ is isomorphic to $\trop^{-1}(V)$ by forgetting $x_1$. Therefore, $p$ is $F$-smooth.
	
	\item[(ii-b)] If $\val(x_1) < a_1(\bar q)$, then $\val(x_0) >a_0(\bar q)$. Hence,
	 $F_0(z)= a_0(\bar q)$, and $F_1(z)=\val(x_1) \equiv u_1$.
	 In a similar way we can show $p$ is also $F$-smooth.
	 
	 \item [(ii-c)]
	 If both $\mathsf v(x_0)\ge a_0(\bar q)$ and $\mathsf v(x_1)\ge a_1(\bar q)$, then $(u_0,u_1)=(F_0(z),F_1(z))=(a_0(\bar q), a_1(\bar q))$ is exactly the corner point of the broken line $s_{\bar q}$ in (\ref{broken_line_s_barq_eq}).
	 One can also check $p$ is not $F$-smooth.
	 
\end{enumerate}
Hence, the $S_{\bar q}$ is still given by the broken line $s_{\bar q}$ defined in the same way as (\ref{broken_line_s_barq_eq}).
Moreover, by (ii-a) and (ii-b), the set of $F$-smooth points in $S_{\bar q}\cong S_{\bar q}\times \{\bar q\}$ include the union of
$
S_{\bar q}^+:= (-\infty, a_0(\bar q)) \times \{a_1(\bar q)\} 
$
and
$
S_{\bar q}^-:=\{a_0(\bar q)\} \times (-\infty, a_1(\bar q)) 
$.
\end{enumerate}

Combining (i) and (ii) above, we have proved the following structural result:

\begin{thm}
	\label{affinoid_torus_away_from_thm}
	The map $F$ restricts to an affinoid torus fibration over $\mathfrak B_0 \equiv \mathfrak B\setminus \hat\Delta$, where
\[
	\hat \Delta := \bigsqcup_{\bar q\in \Pi}  \{ \big(a_0(\bar q), a_1(\bar q)\big)\} \times \{\bar q\}
\]
\end{thm}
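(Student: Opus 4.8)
The plan is to prove Theorem~\ref{affinoid_torus_away_from_thm} essentially by assembling the case analysis that precedes it, promoting the pointwise verifications of $F$-smoothness into an honest structural statement about the fibration $F|_{\mathfrak B_0}$. The heart of the argument is already laid out in items (i), (ii-a), (ii-b), (ii-c) of \S\ref{sss_describe_F}: for each $\bar q$ one slices $\mathfrak B$ by the copy of $\mathbb R^2$ over $\bar q$, identifies the slice $S_{\bar q}$ with the broken line $s_{\bar q}$ of (\ref{broken_line_s_barq_eq}), and observes that the corner point $(a_0(\bar q),a_1(\bar q))$ is the only candidate for non-smoothness, and it actually occurs as such precisely when $\bar q\in\Pi$ (when $\bar q\notin\Pi$ the ``corner'' is still a corner of the broken line but the valuation constraint $\val(x_0)+\val(x_1)=\min\{0,\bar q\}$ is an \emph{equality}, so the point is $F$-smooth by (i)). So the first step is simply to record that $\mathfrak B\setminus\hat\Delta$ is exactly the set of $F$-smooth points: the inclusion $\supseteq$ is (i)+(ii-a)+(ii-b), and the inclusion $\subseteq$, i.e.\ that every point of $\hat\Delta$ fails to be $F$-smooth, is the content of (ii-c) and needs to be spelled out.

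The main point that still requires genuine work is the non-smoothness claim in (ii-c): one must show that no neighborhood $U$ of the corner point $p=(a_0(\bar q),a_1(\bar q),\bar q)$ with $\bar q\in\Pi$ admits an isomorphism $F^{-1}(U)\cong\trop^{-1}(V)$ over a homeomorphism $U\cong V\subset\mathbb R^n$. The obstruction is local-topological together with the affine structure: near such a $p$ the image $\mathfrak B$ is not a topological manifold-with-the-standard-chart because the family of broken lines $s_{\bar q'}$ degenerates as $\bar q'$ crosses $\Pi$ --- concretely, the corner points $(a_0(\bar q'),a_1(\bar q'))$ trace out a set whose local structure (the monodromy of $\mathscr R_1$, equivalently the tropical hyperplane $\Pi$) cannot be flattened to $\trop^{-1}(V)$, whose fibers all carry trivial monodromy and whose base image is an honest open box. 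The cleanest way to do this is the standard Kontsevich--Soibelman argument: an affinoid torus chart forces the local integral affine structure on the base (induced by $\trop$, hence standard on $V$) to match, but the integral affine structure that $F$ induces on $\mathfrak B_0$ near $p$ has nontrivial monodromy around $\hat\Delta$ --- it is governed by the transition maps $\upchi_\ell$ of (\ref{integral_affine_trans_ell_eq}), whose linear parts are non-identity --- so no such chart exists. One can also argue more directly: if $p$ were $F$-smooth, $f^{-1}(U)\cong\trop^{-1}(V)$ would be smooth and in particular $\mathfrak B$ near $p$ would be locally homeomorphic to $\mathbb R^n$ with $p$ interior, but the explicit description $S_{\bar q}=$ broken line with corner, together with the splitting $\Pi$ of the $\bar q$-space into $n$ chambers, exhibits $\mathfrak B$ near $p$ as a branched/cornered set --- one computes that removing $p$ locally disconnects or changes the local homology in a way $\mathbb R^n\setminus\{pt\}$ does not (for $n=2$ this is visible directly from Figure~\ref{figure_visualize_image_j}).

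With non-smoothness of $\hat\Delta$ in hand, the remaining step is to check the three hypotheses of the definition of affinoid torus fibration for $F|_{\mathfrak B_0}$: (a) every point of $\mathfrak B_0$ is $F$-smooth --- this is exactly (i), (ii-a), (ii-b), where in each case an explicit ``forget one coordinate'' map $z\mapsto(x_1,y_1,\dots,y_{n-1})$ or $z\mapsto(x_0,\dots)$ was produced realizing $F^{-1}(U)\cong\trop^{-1}(V)$; (b) $F$ is continuous for the analytic and Euclidean topologies, which is immediate since $F$ is built from $\val$ and the continuous function $\psi_0$ by $\min$'s (indeed $F$ is tropically continuous, already noted after (\ref{F_KS_eq})); and (c) properness of $F$ over $\mathfrak B_0$, which follows because $\trop$ is proper and properness is local on the base, so the local isomorphisms $F^{-1}(U)\cong\trop^{-1}(V)$ transfer it. Finally one should note the set-level identity $\mathfrak B=\bigcup_{\bar q}S_{\bar q}\times\{\bar q\}$ with $S_{\bar q}$ the broken line is established in (i)+(ii), so $\hat\Delta$ as written genuinely sits inside $\mathfrak B$ and the statement ``$F$ restricts to an affinoid torus fibration over $\mathfrak B_0$'' is exactly what has been verified. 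I expect step~(ii-c), the honest verification that the corner points over $\Pi$ are \emph{not} $F$-smooth, to be the only real obstacle; everything else is bookkeeping on top of the preceding case analysis.
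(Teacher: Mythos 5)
Your proposal follows essentially the paper's own route: the paper's proof of Theorem \ref{affinoid_torus_away_from_thm} is literally ``combining (i) and (ii)'' of \S\ref{sss_describe_F}, i.e.\ the smoothness charts produced in (i), (ii-a), (ii-b), exactly as in your assembly step. Two caveats. First, you treat (ii-c) --- non-smoothness at the corner points over $\Pi$ --- as ``the only real obstacle,'' but the theorem as stated does not need it: it only asserts that $F$ restricts to an affinoid torus fibration over $\mathfrak B_0=\mathfrak B\setminus\hat\Delta$, which requires smoothness of every point of $\mathfrak B_0$ and nothing about the points of $\hat\Delta$ (the paper likewise only remarks ``one can also check'' for (ii-c); that claim matters later, for matching singular loci). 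Your monodromy argument for (ii-c) is the right one if you do want it. Second, your alternative ``more direct'' topological argument is flawed: near a corner point $\mathfrak B$ is still a topological manifold (indeed $j(B)\cong\mathbb R^n$ embeds into $\mathfrak B$ by Lemma \ref{j(B)_lem}), so removing the point does not change local homology; the obstruction is the induced integral affine/analytic structure, not the base topology. Your remarks on continuity and properness are fine and slightly more careful than the paper.
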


\subsection{Definition of $f$}
\label{ss_defn_f_B_fibration}

Notice that if $\jmath: \mathfrak U  \xhookrightarrow{} \mathfrak B_0$ is a topological embedding, then $\jmath^{-1} \circ F$ is also an affinoid torus fibration on its domain, by definition (\S \ref{ss_NA_integrable_affinoid_torus}).
For the base $B\equiv \mathbb R^n$ in \S \ref{ss_Gross_fib} and the $j:B\to\mathbb R^{n+1}$ in \S \ref{ss_j_homeomorphism}, a comparison between \S \ref{sss_describe_j} and \S \ref{sss_describe_F} implies the following

\begin{lem}
	\label{j(B)_lem}
	$j(B)$ agrees with the open subset \begin{equation}
		\label{hat_B_eq}
		\hat B:=\{ (u_0,u_1, \bar q)\in \mathfrak B \mid u_1>0\} \subset \mathbb R^{n+1}
	\end{equation}
Moreover, we have $j(\Delta)=\hat\Delta$.
In particular, $B=j^{-1}(\hat B)$ and $B_0=j^{-1}(\hat B \setminus \hat \Delta)$.
\end{lem}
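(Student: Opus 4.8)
The plan is to compare the two explicit descriptions of $j(B)$ in \S\ref{sss_describe_j} and of $\mathfrak B=F(Y)$ in \S\ref{sss_describe_F}, slice by slice over $\bar q\in\mathbb R^{n-1}$, and to observe that the only discrepancy between the broken line $R_{\bar q}$ (the image of $r_{\bar q}$) and $S_{\bar q}$ (the image of $s_{\bar q}$) is that the former is parametrized by $c\in(0,+\infty)$ while the latter is parametrized by $c\in\mathbb R$. Concretely, recall from \eqref{jmath_eq} that $r_{\bar q}(c)=\bigl(\min\{0,\bar q\}+\min\{-c,-\psi_0(\bar q)\},\,\min\{c,\psi_0(\bar q)\}\bigr)$ for $c>0$, whereas from \eqref{broken_line_s_barq_eq} we have $s_{\bar q}(c)$ given by the identical formula but for all $c\in\mathbb R$; here $c$ plays the role of $\psi(\bar q,q_n)$ on the $A$-side (which by Lemma \ref{symplectic_area_increasing_lem} ranges exactly over $(0,+\infty)$) and of $\val(x_1)$ on the $B$-side (which ranges over all of $\mathbb R\cup\{\infty\}$, and over $\mathbb R$ once we fix $x_1\neq 0$, though one must also allow $x_1=0$, i.e.\ $c=+\infty$). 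So the first step is to record that $r_{\bar q}=s_{\bar q}|_{(0,+\infty)}$ and hence $R_{\bar q}\subseteq S_{\bar q}$, with $R_{\bar q}$ consisting exactly of those points of $S_{\bar q}$ for which the second coordinate $u_1$ satisfies $0<u_1$; indeed, as $c$ runs over $(0,+\infty)$ the value $\min\{c,\psi_0(\bar q)\}$ runs over $(0,\psi_0(\bar q)]$, while as $c$ runs over $(-\infty,0]$ it runs over $(-\infty,0]$, and the point $c=+\infty$ gives $u_1=\psi_0(\bar q)>0$ again (this point lies on the horizontal ray, already covered). Since $\psi_0>0$, the set $\{u_1>0\}\cap S_{\bar q}$ is precisely the image of $c\in(0,+\infty)$, i.e.\ $R_{\bar q}$.

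The second step is to assemble these slice identities. Both $j$ and $F$ are slice-preserving over the common projection to $\mathbb R^{n-1}$ (for $j$ this is the diagram in \S\ref{sss_describe_j}; for $F$ it is immediate from $G_k(z)=\val(y_k)$), so $j(B)=\bigcup_{\bar q}R_{\bar q}\times\{\bar q\}$ and $\mathfrak B=\bigcup_{\bar q}S_{\bar q}\times\{\bar q\}$ by \eqref{j(B)_explicit} and \eqref{F(Y)_mathfrak_B_eq}. Combining with Step 1 gives $j(B)=\{(u_0,u_1,\bar q)\in\mathfrak B\mid u_1>0\}=\hat B$, which also shows $\hat B$ is open in $\mathbb R^{n+1}$ (it is the intersection of the closed subset $\mathfrak B$ with the open half-space $\{u_1>0\}$, but more relevantly it is open \emph{within} $\mathfrak B$; if genuine openness in $\mathbb R^{n+1}$ is claimed one should instead note it is open in $\mathfrak B$ with its subspace topology, which is all that is used). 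For the discriminant statement: by \eqref{j(B)_explicit}, $j(\Delta)=\{(A(\bar q),\bar q)\mid\bar q\in\Pi\}$ where $A(\bar q)=(a_0(\bar q),a_1(\bar q))$ with $a_0(\bar q)=\min\{0,\bar q\}-\psi_0(\bar q)$ and $a_1(\bar q)=\psi_0(\bar q)$, using \eqref{a_01_eq}; meanwhile $\hat\Delta=\bigsqcup_{\bar q\in\Pi}\{(a_0(\bar q),a_1(\bar q))\}\times\{\bar q\}$ from Theorem \ref{affinoid_torus_away_from_thm}, with the \emph{same} formulas for $a_0,a_1$. So $j(\Delta)=\hat\Delta$ literally by matching formulas, and since $j$ is a homeomorphism onto its image with $j(B)=\hat B$ and $j(\Delta)=\hat\Delta\subseteq\hat B$, we get $B=j^{-1}(\hat B)$ and $B_0=B\setminus\Delta=j^{-1}(\hat B\setminus\hat\Delta)$.

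The only genuinely delicate point — and the step I expect to require the most care — is the bookkeeping at the endpoints/corners of the broken lines, namely checking that the correspondence $c\leftrightarrow\psi(\bar q,q_n)$ versus $c\leftrightarrow\val(x_1)$ really does cut $S_{\bar q}$ exactly along $\{u_1>0\}$ and not, say, along $\{u_1\ge 0\}$ or with the vertical ray truncated at the wrong height. This hinges on three facts all already available: (a) Lemma \ref{symplectic_area_increasing_lem}, which pins the range of $\psi(\bar q,\cdot)$ down to the \emph{open} interval $(0,+\infty)$ — the openness at $0$ is what makes $R_{\bar q}$ omit the bottom endpoint $(a_0(\bar q),0)$ of the vertical ray of $S_{\bar q}$; (b) the strict positivity $\psi_0(\bar q)=a_1(\bar q)>0$, which guarantees the corner point $A(\bar q)$ itself has $u_1>0$ and therefore lies in $j(B)$, consistent with $j(\Delta)=\hat\Delta\subseteq\hat B$; and (c) the case analysis (i), (ii-a), (ii-b), (ii-c) of \S\ref{sss_describe_F}, which identifies $S_{\bar q}$ with the image of $s_{\bar q}$ including the boundary rays $S^\pm_{\bar q}$ and the corner. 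Once these are in hand the lemma is a formal consequence; no new input beyond \S\ref{s_A_side}–\S\ref{s_B_side} is needed.
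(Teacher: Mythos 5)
Your proposal is correct and follows essentially the same route as the paper: compare the slices $R_{\bar q}$ and $S_{\bar q}$, observe that the only difference is the parameter range $(0,+\infty)$ versus $\mathbb R$ so that $R_{\bar q}=\{(u_0,u_1)\in S_{\bar q}\mid u_1>0\}$, and then assemble via (\ref{j(B)_explicit}) and (\ref{F(Y)_mathfrak_B_eq}), matching $j(\Delta)$ with $\hat\Delta$ by the identical formulas for $(a_0(\bar q),a_1(\bar q))$. Your extra bookkeeping at the endpoints (openness at $u_1=0$ from Lemma \ref{symplectic_area_increasing_lem}, the corner having $u_1=\psi_0(\bar q)>0$, and the reading of ``open'' as open in $\mathfrak B$) only makes explicit what the paper's terser proof leaves implicit.
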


\begin{proof}
	It suffices to compare the $\bar q$-slices for any fixed $\bar q\in \mathbb R^{n-1}$.
	First, let's indicate the coincidence between the $r_{\bar q}$ in (\ref{jmath_eq}) and the $s_{\bar q}$ in (\ref{broken_line_s_barq_eq}). The only difference is the domains are $(0,+\infty)$ and $\mathbb R$ respectively, and $R_{\bar q}=\{(u_0,u_1)\in S_{\bar q}\mid u_1>0\}$. Finally, using (\ref{j(B)_explicit}) and (\ref{F(Y)_mathfrak_B_eq}) completes the proof.
\end{proof}

Define 
$
	\mathscr Y :=F^{-1}(\hat B)
$, and it is exactly given by $\val(x_1)>0$ or equivalently $|x_1|<1$. In particular, $j(B)=F(\mathscr Y)$.
Using the topological embedding $j$ in (\ref{j_homeo_eq}), we define $f:= j^{-1}\circ F|_{\mathscr Y}$. By Theorem \ref{affinoid_torus_away_from_thm}, $f_0:=f|_{B_0}$ is an affinoid torus fibration. 
We set $\mathscr Y_0:=f_0^{-1}(B_0)$.
It is not hard to show $\mathscr Y_0$ is Zariski dense in $Y$ (e.g. by dimension reasons; compare also \cite{payne2009fibers}).
Notably, our construction of $f$ here is purely non-archimedean and does not use any Floer theory so far.
\begin{equation}
	\label{Y_analytic_domain_eq}
\xymatrix{
 			& \mathscr Y \ar@{-->}[dl]_f \ar[d]^F   \\
 B\ar[r]^j_{\cong} & \hat B 
}
\end{equation}

\section{Family Floer mirror construction: an easy-to-use review}
\label{s_family_review}

One may first skip \S \ref{s_family_review} for the mere affine geometry matching in \S \ref{ss_one_page_proof_intro} (see a reader guide in Remark \ref{floer_omit_rmk}).
For the legibility, we plan to substantially simplify the exposition of the mirror construction in \cite{Yuan_I_FamilyFloer} in an easy-to-use way.
These costs we pay are briefly explained in Remark \ref{rigor_affinoid_viewpoint_chart_rmk},
\ref{rigor_affinoid_gluing_rmk},  \ref{rigor_superpotential_rmk} for serious readers.

\subsection{Statement}
Let $(X,\omega, J)$ be a K\"ahler manifold of real dimension $2n$.
Suppose there is a Lagrangian torus fibration $\pi_0:X_0 \to B_0$ on some open domain $X_0\subset X$; we require it is \textit{semipositive} in the sense that there is no holomorphic stable disk of negative Maslov index bounding a Lagrangian fiber.
By \cite[Lemma 3.1]{AuTDual}, every special Lagrangian (or graded Lagrangian) satisfies this condition.
Further, we require that all Lagrangian fibers are weakly unobstructed (see e.g. \cite[Page 7]{Au_Special}) in the sense that their associated minimal model $A_\infty$ algebras have vanishing \textit{obstruction ideal} (slightly different from the Maurer-Cartan equations in the literature).
Thanks to Solomon \cite{Solomon_Involutions}, a nice sufficient condition is when each Lagrangian fiber is preserved by an anti-symplectic involution $\varphi$ (see also \cite{Solomon_Symmetry_Lag}). For example, the Gross Lagrangian fibration in \S \ref{ss_Gross_fib} or in (\ref{pi_intro_eq}) admits the involution given by the complex conjugations $z_i\mapsto \bar z_i$.
In general, such an involution $\varphi$ gives a pairing on $\pi_2(X, L_q)$ via $\beta\leftrightarrow -\varphi_*\beta$, inducing a pairwise canceling for the obstruction formal power series.
Beware that it does not mean the virtual counts of Maslov-0 disks vanish, and they do still contribute to the homological perturbations for the minimal model $A_\infty$ algebras and the wall-crossing $A_\infty$ homotopy equivalence.

Now, the family Floer mirror construction in \cite{Yuan_I_FamilyFloer} can be stated as follows:

\begin{thm}
	\label{Main_theorem_thesis_thm}
	Given $(X,\pi_0)$ as above, there is a triple
	$(X_0^\vee,W_0^\vee, \pi_0^\vee)$
	consisting of a non-archimedean analytic space $X_0^\vee$ over $\Lambda$, a global analytic function $W_0^\vee$, and a {\textit{dual affinoid torus fibration}} $\pi_0^\vee: X_0^\vee\to B_0$ such that
	
	\begin{enumerate}[(a)]
		\item The non-archimedean analytic structure of $(X_0^\vee, W_0^\vee, \pi_0^\vee)$ is unique up to isomorphism.
		\item The integral affine structure on $B_0$ induced by $\pi_0^\vee$ coincides with the one induced by $\pi_0$
		\item The set of closed points in $X_0^\vee$ coincides with 
\begin{equation}
	\label{set_describe_mirror}
	\textstyle
	\bigcup_{q\in B_0} H^1(L_q; U_\Lambda)
\end{equation}
		where $U_\Lambda$ is the unit circle in the Novikov field $\Lambda$.
	\end{enumerate}
\end{thm}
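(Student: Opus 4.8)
The plan is to realize $X_0^\vee$ as a \emph{non-archimedean analytic space glued from affinoid charts produced by the Fukaya $A_\infty$-algebras of the fibers}, in the spirit of Fukaya's family Floer program but organizing the gluing through the ud-homotopy formalism of \cite{Yuan_I_FamilyFloer}. First I would fix a generic compatible $J$ and, for each $q\in B_0$, pass to a chain-level Fukaya algebra of $L_q$ (counting $J$-holomorphic disks in the ambient $X$, with coefficients Novikov series that converge by Gromov compactness in the non-archimedean sense), then apply homological perturbation to obtain a minimal model $A_\infty$-structure on $H^*(L_q;\Lambda_0)$. Semipositivity rules out negative-Maslov stable disks, and weak unobstructedness — here supplied by the anti-symplectic involution $z_i\mapsto\bar z_i$ via Solomon \cite{Solomon_Involutions} — makes the space of gauge-equivalence classes of weak bounding cochains nonempty and parametrized by $H^1(L_q;\Lambda_0)$ modulo the period lattice. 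Over a small contractible $U\ni q$ carrying an action-angle chart $\chi$ (Theorem \ref{action_coordinate_thm}), these assemble into a local model whose underlying analytic space is the affinoid torus $\trop^{-1}(\chi(U))$ with closed points $\bigcup_{q'\in U}H^1(L_{q'};U_\Lambda)$, with structure map $\pi_U=\chi^{-1}\circ\trop$ and local potential $W_U$, the weighted count of Maslov-$2$ disks in $X$ bounding the fibers evaluated on the bounding cochains.

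Second, I would cover $B_0$ by such charts $\{U_\alpha\}$ and construct the transition isomorphisms on overlaps. For $q\in U_\alpha\cap U_\beta$ the two Fukaya algebras are $A_\infty$-homotopy equivalent through a pseudo-isotopy interpolating the auxiliary data; the induced map on bounding cochains must then be upgraded, using energy/valuation estimates, to an \emph{isomorphism of affinoid algebras} $\phi_{\beta\alpha}$ over $U_{\alpha\beta}$ that commutes with the structure maps and intertwines $W_\alpha$ with $W_\beta$. The delicate points are (i) \emph{convergence}: each coefficient of $\phi_{\beta\alpha}$ carries a strictly positive lower energy bound, so the transition is genuinely analytic, not merely formal; and (ii) the \emph{cocycle condition} $\phi_{\gamma\beta}\circ\phi_{\beta\alpha}=\phi_{\gamma\alpha}$, which holds only modulo the gauge/homotopy ambiguity, so one rigidifies via the ud-homotopy relation and the canceling trick of \cite{Yuan_I_FamilyFloer} to obtain a coherent gluing datum. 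Gluing then yields the separated analytic space $X_0^\vee$, the global function $W_0^\vee$, and the map $\pi_0^\vee$; reading off the tropical charts shows $\pi_0^\vee$ is an affinoid torus fibration, and its induced integral affine structure is recovered from the valuations of the local monomial coordinates, hence agrees with that of $\pi_0$, since the base component of each $\phi_{\beta\alpha}$ tropicalizes to the element of $\mathrm{GL}_n(\mathbb Z)\ltimes\mathbb R^n$ given by the change of $\mathbb Z$-basis of $H_1(L_q;\mathbb Z)$ and the constant flux shift of Theorem \ref{action_coordinate_thm}.

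Third, for uniqueness up to isomorphism (part (a)) I would argue that any two admissible sets of choices (of $J$, perturbation data, the cover, and the homotopy equivalences) are themselves connected by a pseudo-isotopy of pseudo-isotopies, which descends to an isomorphism of the glued triples $(X_0^\vee,W_0^\vee,\pi_0^\vee)$; equivalently, the whole construction factors through the homotopy category of the relevant family of filtered $A_\infty$-algebras. Part (c) is then local: on each chart the closed-point set is the torsor $\trop^{-1}(\chi(q))\cong H^1(L_q;U_\Lambda)$, and the $\phi_{\beta\alpha}$ preserve valuations, so these sets glue to $\bigcup_{q\in B_0}H^1(L_q;U_\Lambda)$, and part (b) was already obtained above.

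The hard part will be exactly the gluing step: promoting the $A_\infty$ homotopy equivalences from inducing \emph{bijections of Maurer–Cartan sets} — a soft homological fact — to inducing \emph{convergent isomorphisms of affinoid algebras satisfying a coherent cocycle condition}. As stressed around (\ref{negate_MC}), the Maurer–Cartan bijection alone is not sufficient for a local-to-global analytic construction, and bridging this gap is precisely what forces the quantitative Gromov-compactness energy bounds together with the ud-homotopy and canceling bookkeeping of \cite{Yuan_I_FamilyFloer}.
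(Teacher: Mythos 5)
Your outline follows essentially the same route as the paper, which itself defers the full argument to \cite{Yuan_I_FamilyFloer} and reviews it in \S\ref{s_family_review}: local charts given by pointed affinoid tropical charts (polyhedral affinoid algebras $\Lambda\langle U,q_0\rangle$), transition maps coming from $A_\infty$ (ud-)homotopy equivalences upgraded to convergent affinoid algebra homomorphisms, with convergence supplied by the uniform Groman--Solomon reverse isoperimetric inequality and choice-independence/coherence rigidified by the ud-homotopy and canceling tricks, plus the Maslov-2 counts glued into $W_0^\vee$ via (\ref{superpotential_gluing_map_eq}). You also correctly isolate the genuine difficulty (promoting Maurer--Cartan bijections to coherent analytic gluing), which is exactly the point the paper emphasizes in Remarks \ref{rigor_affinoid_viewpoint_chart_rmk} and \ref{rigor_affinoid_gluing_rmk}.
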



Beware that the mere homotopy invariance of Maurer-Cartan sets is quite insufficient to develop the \textit{analytic topology} on $X_0^\vee$
for which we must seek more structure and information.
In our specific SYZ context, the Maurer-Cartan sets are simply $H^1(L_q;\Lambda_0)$, which can at most give certain set-theoretic or local approximation. This is one key point missing in \cite{Tu}; see Remark \ref{rigor_affinoid_viewpoint_chart_rmk}, \ref{rigor_affinoid_gluing_rmk} for more discussions.
Indeed, the new ud-homotopy theory in \cite{Yuan_I_FamilyFloer} is necessary to upgrade the conventional Maurer-Cartan picture to a higher level, matching adic-convergent formal power series instead of just bijection of sets.
The virtual counts of Maslov-0 disks lead to an analytic space structure on the above fiber-wise union (\ref{set_describe_mirror}) of the space of $U_\Lambda$-local systems.
Moreover, the counts of Maslov-2 disks give rise to the global potential function $W_0^\vee$ on $X_0^\vee$.
%

\subsection{Local affinoid tropical charts}
\label{ss_FF_local_affinoid_tropical_chart}

Let $U\subset B_0$ be a contractible open subset, and choose a point $q_0$ near $U$ ($q_0\notin U$ is possible).
We require $U$ is sufficiently small and $q_0$ is sufficiently close to $U$ so that the reverse isoperimetric inequalities hold uniformly over a neighborhood of $U\cup\{q_0\}$.
Let 
\[
\chi: (U,q_0) \xrightarrow{\cong} (V,c)\subset \mathbb R^n
\]
be a (pointed) integral affine coordinate chart such that $\chi(q_0)=c$. Then, we have an identification
\begin{equation}
	\label{affinoid_torus_chart}
\tau:	(\pi_0^\vee)^{-1}(U) \xrightarrow{\cong} \trop^{-1}(V-c)
\end{equation}
with $\trop\circ \ \tau=\chi\circ \pi_0^\vee$. Let's call $\tau$ a (pointed) \textit{affinoid tropical chart} as in \S \ref{ss_NA_integrable_affinoid_torus}.
Recall that the left side is the disjoint union $(\pi_0^\vee)^{-1}(U)\equiv \bigcup_{q\in U} H^1(L_q; U_\Lambda)$ set-theoretically.
A closed point $ \mathbf y$ in the dual fiber $H^1(L_q; U_\Lambda)$ can be viewed as a group homomorphism $\pi_1(L_q)\to U_\Lambda$ (i.e. a flat $U_\Lambda$-connection modulo gauge equivalence), and we have the natural pairing 
\begin{equation}
	\label{connection_view}
	\pi_1(L_q)\times H^1(L_q; U_\Lambda)\to U_\Lambda, \qquad  (\alpha, \mathbf y) \mapsto \mathbf y^\alpha
\end{equation}
Write $\chi=(\chi_1,\dots, \chi_n)$, and it gives rise to a continuous family $e_i=e_i(q)$ of $\mathbb Z$-bases of $\pi_1(L_q)$ for all $q\in U$, c.f. (\ref{local_system_H_2_H_1_eq}).
Then, the corresponding affinoid tropical chart $\tau$ has a very concrete description:
\begin{equation}
	\label{affinoid_torus_explicit_eq}
\tau(\mathbf y)= (T^{\chi_1(q)} \mathbf y^{e_1(q)},\dots, T^{\chi_n(q)} \mathbf y^{e_n(q)} )
\end{equation}

\begin{rmk}
	\label{rigor_affinoid_viewpoint_chart_rmk}
The above picture is oversimplified. To develop the analytic topology, we must study local affinoid spaces or equivalently the corresponding affinoid algebras.
Specifically, in the above (\ref{affinoid_torus_chart}), we may first assume $V=\chi(U)$ is a rational polytope in $\mathbb R^n$.
Given a base point $q_0$, any $q\in U$ can be viewed as a vector, denoted by $q-q_0$, in $H^1(L_{q_0}; \mathbb R)\cong T_{q_0} B$.
Instead of (\ref{affinoid_torus_chart}), we should more precisely identify $(\pi_0^\vee)^{-1}(U)$ with the maximal ideal spectrum (or the multiplicative seminorm spectrum) of the \textit{polyhedral affinoid algebra} $\Lambda\langle U, q_0 \rangle$ \cite{EKL}. It consists of
the Laurent formal power series inside $\Lambda[[\pi_1(L_{q_0})]]$	that have the form $\sum_{\alpha\in\pi_1(L_{q_0})} c_\alpha Y^\alpha$ with $c_\alpha\in\Lambda$ and $\val(c_\alpha)+\langle \alpha, q-q_0 \rangle \to \infty$ as $|\alpha|\to\infty$ for any $q \in U$. Here $Y$ is a symbol and $Y^\alpha$ are the monomials.
Now,
a closed point $\mathbf y\in H^1(L_{q}; U_\Lambda)$ for some $q\in U$ corresponds to the maximal ideal
in $\Lambda\langle U, q_0\rangle$ generated by $Y^\alpha- T^{\langle \alpha, q-q_0\rangle }\mathbf y^{\alpha(q)}$ for all $\alpha\in\pi_1(L_{q_0})$, where $\alpha(q)\in \pi_1(L_q)$ denotes the induced class; c.f. (\ref{local_system_H_2_H_1_eq}).
Finally, we must choose $U$ to be sufficiently small, subject to Groman-Solomon's reverse isoperimetric inequality \cite{ReverseI, ReverseII} for the non-archimedean convergence. We also need to generalize it to a uniform version as in \cite{Yuan_I_FamilyFloer}; otherwise, as this inequality depends on the base point, we cannot ensure the convergence for the analytic gluing (c.f. Remark \ref{rigor_affinoid_gluing_rmk}).
\end{rmk}

\subsection{Gluing}
\label{ss_FF_gluing_maps}
In the construction of Theorem \ref{Main_theorem_thesis_thm}, we start with various local affinoid tropical charts as above, and then we can develop transition maps (or call gluing maps) among them in a choice-independent manner. 
This process encodes the quantum corrections of the pseudo-holomorphic disks bounded by smooth $\pi$-fibers but possibly meeting the singular $\pi$-fibers (Red disks in Figure \ref{figure_area_sing_fiber}).

Let's take two \textit{pointed} integral affine charts. Replacing the two domains by their intersection, we may assume the two charts have the same domain $U\subset B_0$, but the base points may be different and outside of $U$. 
Namely, as before in Section \ref{ss_FF_local_affinoid_tropical_chart}, we take two pointed integral affine charts 
\[
\chi_a=(\chi_{a1},\dots, \chi_{an}):(U,q_a) \to (V_a,c_a)
\]
for $a=1,2$. Then, $\upchi:=\chi_2\circ \chi_1^{-1}:V_1\to V_2$ is an integral affine transformation.
Due to (\ref{affinoid_torus_chart}) above, we have two affinoid tropical charts on the same domain:
\[
\tau_a: (\pi_0^\vee)^{-1}(U)\to \trop^{-1}(V_a-c_a)
\]
 such that $\chi_a\circ \pi_0^\vee=\trop\circ \ \tau_a$ for $a=1,2$.


The transition map between the two charts $\tau_1$ and $\tau_2$ is an automorphism map $\phi$ that, concerning the Fukaya's trick, captures the wall-crossing information of a Lagrangian isotopy from $L_{q_1}$ to $L_{q_2}$.
In brief, we can view it as a fiber-preserving map:
\begin{equation}
	\label{gluing_map_phi_domain_target}
	\phi: \bigcup_{q\in U} H^1(L_q; U_\Lambda)\to \bigcup_{q\in U} H^1(L_q;U_\Lambda) 
\end{equation}
But, be careful, the source and the target of $\phi$ should correspond to the two different affinoid tropical charts $\tau_1$ and $\tau_2$ respectively, although they are set-theoretically the same. In particular, the base points $q_a$ for $\tau_a$ matter a lot for the analytic structure.
Indeed, by the two affinoid tropical charts $\tau_1$ and $\tau_2$, the gluing map $\phi$ can be regarded an analytic map between open subdomains in $(\Lambda^*)^n$ as follows:
\begin{equation}
	\label{Phi_review_eq}
	\Phi:= \tau_2\circ \phi \circ \tau_1^{-1}:
	\trop^{-1}(V_1-c_1)\to \trop^{-1}(V_2-c_2)
\end{equation}

By definition, if $\mathbf y $ is a point in $ H^1(L_q; U_\Lambda)$ for some $q\in U$, then $\tilde { \mathbf y} :=\phi(\mathbf y)$ is a point in the same fiber $H^1(L_q;U_\Lambda)$ and is subject to the following condition:
\begin{equation}
	\label{gluing_map_pointwise}
	\tilde{\mathbf y}^\alpha= \mathbf y^\alpha \exp \langle
	\alpha, \pmb {\mathfrak F}(\mathbf y) 
	\rangle
\end{equation}
where we use the pairing in (\ref{connection_view}) and $\pmb {\mathfrak F}$ is a vector-valued formal power series\footnote{Abusing the notations, this really means $\mathfrak T^*\pmb {\mathfrak F}$ for the $\pmb{\mathfrak F}\in \Lambda[[\pi_1(L_{q_1})]]\hat\otimes H^1(L_{q_1})$ in Remark \ref{rigor_affinoid_gluing_rmk} and for the homomorphism $\mathfrak T: Y^{\alpha(q)}\mapsto T^{\langle \alpha, q-q_1\rangle} Y^{\alpha(q_1)}$. But, we often suppress this to make the notations simpler.} in $ \Lambda[[\pi_1(L_{q})]]\hat\otimes H^1(L_{q})$.
Roughly, the $\pmb{\mathfrak F}$ is decided by the virtual counts of Maslov-0 disks\footnote{Unless we use Fukaya's trick, one can roughly think only $J$-holomorphic curves for a fixed $J$ are considered.} along a Lagrangian isotopy from $L_{q_1}$ to $L_{q_2}$.
The existence and uniqueness of such a $\pmb{\mathfrak F}$ is proved in \cite{Yuan_I_FamilyFloer}.
By definition, the Novikov coefficients of $\pmb {\mathfrak F}$ have positive valuations, and one can prove the $\exp\langle \alpha, \pmb{\mathfrak F}\rangle$ has valuation zero for sufficiently small $U$ by the reverse isoperimetric inequality again. In particular, $\phi$ preserves the fibers.

\begin{rmk}
	\label{rigor_affinoid_gluing_rmk}
	Again, we must work with the category of affinoid algebras to be completely rigorous.
	The gluing map $\phi\equiv \psi^*$ comes from an affinoid algebra homomorphism:
	\begin{equation}
		\label{affinoid_algebra_perspective_homomorphism} 
		\psi: \Lambda \langle U, q_2\rangle \to \Lambda\langle U, q_1\rangle, \qquad
		Y^{\alpha(q_2)} \mapsto T^{\langle \alpha, q_1-q_2\rangle} Y^{\alpha(q_1)} \exp\langle \alpha, \pmb {\mathfrak F}(Y)\rangle
	\end{equation}
	for 
	\begin{equation}
		\label{definition_formula_F_ell_eq}
		\pmb {\mathfrak F}=\sum_{\mu(\beta)=0} T^{E(\beta)} Y^{\partial\beta} \f_{0,\beta}
	\end{equation}
where $\f=\{\f_{k,\beta}\} \ (k\ge 0, \beta\in\pi_2(X,L_q))$ is, up to Fukaya's trick, \textit{some} $A_\infty$ (ud-)homotopy equivalence between two $A_\infty$ algebras associated to $L_{q_1}$ and $L_{q_2}$.
We can finally check that the simplified description (\ref{gluing_map_pointwise}) agrees with (\ref{affinoid_algebra_perspective_homomorphism}) using the perspective of Remark \ref{rigor_affinoid_viewpoint_chart_rmk}.

The idea of finding (\ref{affinoid_algebra_perspective_homomorphism}) is to use the coordinate change in \cite[(1.6)]{FuCyclic} to the morphism \cite[(3.6.37)]{FOOOBookOne}, and it is first discovered by J. Tu in \cite{Tu}. 
But, we cannot just naively use the homotopy invariance of Maurer-Cartan sets to get the analytic topology on $X_0^\vee$. Indeed, the idea of Maurer-Cartan invariance is overall correct but need to be carried out in a more precise level, matching adic-convergent formal power series rather than just bijections of sets.
	For this, we need the stronger ud-homotopy in \cite{Yuan_I_FamilyFloer}; we also need a uniform version of reverse isoperimetric inequality (c.f. Remark \ref{rigor_affinoid_viewpoint_chart_rmk}) for the non-archimedean convergence issues when we move between adjacent local tropical charts.
	
Finally, note that the $A_\infty$ morphism $\f$ is obtained by a parameterized moduli space of holomorphic disks and is highly choice-sensitive. But surprisingly, the gluing map $\phi$ is actually unchanged for \textit{any} such $A_\infty$ (ud-)homotopy equivalence. This is missed in \cite{Tu} but is proved in \cite{Yuan_I_FamilyFloer} by the ud-homotopy relations.
In our opinion, the choice-independence of the gluing maps is the cornerstone of everything about the family Floer mirror construction, including the results in this paper as well as those in \cite{Yuan_e.g._FamilyFloer} \cite{Yuan_c_1}.
\end{rmk}

\subsection{Void wall-crossing}

Let $B_1\subset B_0$ be a contractible open set.
Let $B_2=\{x\in B_0\mid \dist (x, B_1) < \epsilon\}$ be a slight thickening of $B_1$ in $B_0$. We assume it is also contractible and $\epsilon>0$ is a sufficiently small number so that the estimate constant in the reverse isoperimetric inequalities for any Lagrangian fiber over $B_1$ exceeds $\epsilon$ uniformly (c.f. Remark \ref{rigor_affinoid_viewpoint_chart_rmk}, \ref{rigor_affinoid_gluing_rmk}, and \cite{Yuan_I_FamilyFloer}).
Then, we have:

\begin{prop}
	\label{trivial_translation_prop}
	Let $\chi: B_2\xhookrightarrow{} \mathbb R^n$ be an integral affine coordinate chart. If for every $q\in B_1$, the Lagrangian fiber $L_{q}$ bounds no non-constant Maslov index zero holomorphic disk, then there is an affinoid tropical chart $(\pi_0^\vee)^{-1} (B_2) \cong \trop^{-1}(\chi(B_2))$.
\end{prop}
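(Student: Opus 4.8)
The plan is to exploit the explicit shape of the transition maps from \S\ref{ss_FF_gluing_maps}. The wall-crossing power series in (\ref{definition_formula_F_ell_eq}) is assembled \emph{only} from the components $\f_{0,\beta}$ of an $A_\infty$ (ud-)homotopy equivalence with $\mu(\beta)=0$; so the strategy is to show that, under the hypothesis, all such $\f_{0,\beta}$ with $\beta\ne 0$ vanish, whence $\pmb{\mathfrak F}\equiv 0$, whence by (\ref{gluing_map_pointwise}) every relevant gluing map is the identity on fibers and, by (\ref{affinoid_algebra_perspective_homomorphism}), a pure translation at the level of affinoid algebras. Trivial (``void'') wall-crossing then forces the local charts to patch without distortion into a single standard chart over $B_2$.

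First I would fix the topological bookkeeping. Since $B_2$ is contractible, the local system $\mathscr R_1|_{B_2}$ trivializes, so after composing $\chi$ with an integral affine transformation I can choose a continuous family of $\mathbb Z$-bases $e_1(q),\dots,e_n(q)$ of $\pi_1(L_q)$, $q\in B_2$, adapted to $\chi=(\chi_1,\dots,\chi_n)$ as in \S\ref{ss_FF_local_affinoid_tropical_chart}, and I fix a base point $q_0\in B_1$. The candidate chart is the map $\tau$ of (\ref{affinoid_torus_explicit_eq}), $\tau(\mathbf y)=\big(T^{\chi_i(q)}\mathbf y^{e_i(q)}\big)_i$ for $\mathbf y\in H^1(L_q;U_\Lambda)$, $q\in B_2$; after composing with the translation by $T^{\chi(q_0)}$ it satisfies $\trop\circ\tau=\chi\circ\pi_0^\vee$ and has target $\trop^{-1}(\chi(B_2))$ (exhausting $\chi(B_2)$ by rational polytopes if needed, as in Remark \ref{rigor_affinoid_viewpoint_chart_rmk}). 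The content to be proved is that $\tau$ is an isomorphism for the structure sheaf that $(\pi_0^\vee)^{-1}(B_2)$ inherits from the glued space $X_0^\vee$ of Theorem \ref{Main_theorem_thesis_thm}; equivalently, that $(\pi_0^\vee)^{-1}(B_2)$ is the maximal-ideal (seminorm) spectrum of the polyhedral affinoid algebra $\Lambda\langle B_2,q_0\rangle$. The hypothesis on $\epsilon$ is exactly what makes the elements appearing below $\Lambda\langle B_2,q_0\rangle$-convergent, via the uniform Groman--Solomon reverse isoperimetric inequality, even though $q_0$ sits at distance up to $\epsilon$ from the far fibers of $B_2$.

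The heart of the argument is the triviality of the transition maps. Restricting the cover $\{U_j\}$ used to build $X_0^\vee$ to $B_2$, I may assume each $U_j\subset B_2$, and I am free to take each base point $q_j\in B_1$ and to realize the $A_\infty$ homotopy equivalence $\f$ comparing the $A_\infty$ algebras of $L_{q_j}$ and $L_{q_0}$ by a parameterized moduli space over a Lagrangian isotopy running through fibers over a path in $B_1$ (possible since $B_1$ is contractible, hence path-connected), with Fukaya's trick absorbing the small deformation onto the nearby fibers of $B_2$. Along such an isotopy no fiber bounds a non-constant Maslov-$0$ holomorphic disk by hypothesis, and by semipositivity together with the Calabi--Yau condition the only Maslov-$0$ stable disks would decompose into such (excluded) disk components and $c_1=0$ sphere bubbles; hence the moduli spaces contributing to $\f_{0,\beta}$ are empty for every $\beta\ne0$ with $\mu(\beta)=0$. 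Therefore $\pmb{\mathfrak F}\equiv0$ in (\ref{definition_formula_F_ell_eq}); by (\ref{gluing_map_pointwise}) the transition map between $\tau_j$ and $\tau$ is $\tilde{\mathbf y}^\alpha=\mathbf y^\alpha$, i.e.\ the identity on fibers, and by (\ref{affinoid_algebra_perspective_homomorphism}) it is the pure translation $Y^{\alpha(q_j)}\mapsto T^{\langle\alpha,q_0-q_j\rangle}Y^{\alpha(q_0)}$. By the choice-independence of the gluing maps established in \cite{Yuan_I_FamilyFloer}, this conclusion is insensitive to the (non-canonical) $\f$. Consequently the local charts patch, without distortion, into the single chart $\tau$, giving $(\pi_0^\vee)^{-1}(B_2)\cong\trop^{-1}(\chi(B_2))$ as claimed. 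The step I expect to be the main obstacle is precisely this one: confining the Lagrangian isotopies to a uniform neighborhood of $B_1$ while still comparing the $A_\infty$ algebras of the fibers over $B_2\setminus B_1$ — where Fukaya's trick and the uniform reverse isoperimetric inequality must be invoked with care — together with the verification that no nodal or sphere-bubbled Maslov-$0$ configuration survives; once $\pmb{\mathfrak F}\equiv0$ is in hand, the remaining convergence and patching is routine given \S\ref{ss_FF_local_affinoid_tropical_chart}--\S\ref{ss_FF_gluing_maps} and Remarks \ref{rigor_affinoid_viewpoint_chart_rmk}--\ref{rigor_affinoid_gluing_rmk}.
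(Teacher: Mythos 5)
Your proposal is correct and follows essentially the same route as the paper's own proof: cover $B_2$ by small pointed charts whose base points lie in $B_1$, use the hypothesis to conclude the wall-crossing series $\pmb{\mathfrak F}$ vanishes so the gluing maps (\ref{gluing_map_pointwise}), (\ref{affinoid_algebra_perspective_homomorphism}) reduce to pure translations $y_i\mapsto T^{c_i}y_i$, and then patch the local affinoid tropical charts into a single one over $B_2$. Your extra remarks on sphere bubbles and the uniform reverse isoperimetric inequality only spell out details the paper leaves implicit.
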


\begin{proof}
	First, since $B_2$ is contractible, we can first single out a fixed pointed integral affine chart $\chi: (B_2,  q_0) \to (V,  c) \subset \mathbb R^n$ for some point $q_0\in B_2$.
	Next, we can cover $B_2$ by pointed integral affine coordinate charts $\chi_i: (U_i, q_i) \to (V_i, v_i)$, $ i\in \mathcal I$. We may require $\chi_i=\chi|_{U_i}$ and the diameters of $U_i$ are less than $\epsilon$. In particular, we may require all $q_i$'s are contained in $B_1$, and there will be no Maslov-0 disks along a Lagrangian isotopy among the fibers between any pair of $q_i$'s inside $B_1$.
	On the other hand, just like (\ref{affinoid_torus_chart}), we have many affinoid tropical charts $\tau_i: (\pi_0^\vee)^{-1}(U_i)\cong \trop^{-1}( V_i-v_i)$.
	The gluing maps among these tropical charts take the form as in (\ref{Phi_review_eq}). However, due to the non-existence of the Maslov-0 holomorphic disks, they have no twisting terms and take the simplest form $y_i\mapsto T^{c_i} y_i$. In conclusion, we can get a single affinoid tropical chart by gluing all these $\tau_i$'s.
\end{proof}

\subsection{Superpotential}
\label{ss_review_superpotential}
Assume $\beta\in \pi_2(X,L_{q_0})$ has Maslov index two, i.e. $\mu(\beta)=2$, and it also induces $\beta \equiv \beta(q)\in \pi_1(L_q)$ for any $q$ in a small contractible neighborhood of $q_0$ in $B_0$.
Denote by $\mathsf n_\beta\equiv \mathsf n_{\beta(q)}$ the corresponding \textit{open Gromov-Witten invariant}.\footnote{Briefly, it is the `count' of Maslov-2 holomorphic disks. Specifically, it comes from the $A_\infty$ algebra associated to $L_q$, and in general, we need to go to its minimal model $A_\infty$ algebra to define it. See \cite{Yuan_I_FamilyFloer}.}
It depends on the base point $q$ and the almost complex structure $J$ in use. For our purpose, unless the Fukaya's trick is applied, we always use the same $J$ in this paper. Then, due to the wall-crossing phenomenon, one may roughly think the numbers $\mathsf n_{\beta(q)}$ will vary dramatically in a discontinuous manner when we move $q$.

Now, we describe the superpotential $W^\vee:=W_0^\vee$ in Theorem \ref{Main_theorem_thesis_thm}.
Fix a pointed integral affine chart $\chi:(U,q_0)\to (V,c)$, and pick an affinoid tropical chart $\tau$ that covers $\chi$ as in (\ref{affinoid_torus_chart}).
Recall that the domain $U$ must be sufficiently small.
Then, the local expression of $W^\vee$ with respect to $\tau$ is given by 
\begin{equation}
	\label{superpotential_in_chart_pointwise_eq}
	W^\vee|_\tau : \bigcup_{q\in U} H^1(L_q; U_\Lambda)\to \Lambda,
	\qquad
	\mathbf y\mapsto \sum_{\beta\in \pi_2(X,L_q), \mu(\beta)=2} T^{E(\beta)} \mathbf y^{\partial\beta} \mathsf n_{\beta(q_0)}
\end{equation}
where $\mathbf y\in H^1(L_q; U_\Lambda)$ for any $q\in U$ and we use $\mathsf n_{\beta(q_0)}$ for the fixed $q_0$.
Alternatively, by (\ref{affinoid_torus_chart}), one may think of $W^\vee|_\tau$ as
\[
\mathcal W_\tau \equiv W^\vee \circ \tau^{-1}  : \trop^{-1}(V-c)\to (\pi_0^\vee)^{-1}(U)\to \Lambda
\]

The delicate story of the wall-crossing uncertainty can be well narrated by the gluing maps (\ref{gluing_map_pointwise}) among the atlas of various affinoid tropical charts (\ref{affinoid_torus_chart}) in view of Theorem \ref{Main_theorem_thesis_thm}. Specifically, we take two pointed integral affine charts $\chi_a: (U,q_a)\to (V_a, c_a)$ and two corresponding affinoid tropical charts $\tau_a: (\pi_0^\vee)^{-1}(U)\to \trop^{-1}(V_a-c_a)$ for $a=1,2$ as before in \S \ref{ss_FF_gluing_maps}. Let $\phi$ be the gluing map from the chart $\tau_1$ to the $\tau_2$ as in (\ref{gluing_map_phi_domain_target}, \ref{gluing_map_pointwise}).
Then, we must have $	W^\vee|_{\tau_2} (\phi(\mathbf y)) = W^\vee|_{\tau_1} (\mathbf y)$. Equivalently, if we set $\Phi=\tau_2\circ \phi \circ \tau_1^{-1}$ like (\ref{Phi_review_eq}), this means
\begin{equation}
	\label{superpotential_gluing_map_eq}
\mathcal W_{\tau_2}(\Phi(y))=\mathcal W_{\tau_1}(y)
\end{equation}

\begin{rmk}
	\label{rigor_superpotential_rmk}
	To make it precise, we need to view $W^\vee|_{\tau_a}$ as elements in the affinoid algebra $\Lambda\langle U, q_a\rangle$ as in Remark \ref{rigor_affinoid_viewpoint_chart_rmk} again.
	Denote them by $W_a$ ($a=1,2$) respectively. If $\psi$ is the affinoid algebra homomorphism such that $\phi\equiv \psi^*$ as in Remark \ref{rigor_affinoid_gluing_rmk}, then equation (\ref{superpotential_gluing_map_eq}) means $\psi(W_2)=W_1$. 
\end{rmk}

\subsection{Maslov-0 determinism}
\label{ss_Maslov_zero_determine}

In our Floer-theoretic mirror construction, the counts of the Maslov-0 disks are overwhelmingly more important than that of Maslov-2 disks.
Although the mirror superpotential $W^\vee$ is given by the counts of Maslov-2 disks locally in each chart, the local-to-global gluing among the various local expressions (\ref{superpotential_in_chart_pointwise_eq}) are all given by the counts of Maslov-0 disks.

In practice, there is a very useful observation as follows:
The Lagrangian fibration $\pi_0$ can be placed in different ambient symplectic manifolds, say $\overline X_1$ and $\overline X_2$. It often happens that the Maslov-0 disks are the same in both situations, and then the mirrors associated to $(X_i,\pi_0)$, $i=1,2$, in Theorem \ref{Main_theorem_thesis_thm} can be denoted by $(X_0^\vee, W_i^\vee, \pi_0^\vee)$, \textit{sharing the same mirror analytic space $X^\vee_0$ and the same dual affinoid torus fibration $\pi_0^\vee:X_0^\vee\to B_0$} but having different superpotentials $W_i^\vee$.

The `Maslov-0 open Gromov-Witten invariant' should be all the counting as a whole rather than any single of them. All the virtual counts of the Maslov-0 disks along an isotopy, only taken together, can form an invariant. Roughly, it forms the ud-homotopy class of a morphism in the category $\UD$ in \cite{Yuan_I_FamilyFloer}.

\section{T-duality matching and dual singular fibers}
\label{s_T_duality}

By Theorem \ref{Main_theorem_thesis_thm},
we denote the mirror triple associated to $\pi_0$ (\S \ref{ss_Gross_fib}), placed in $X$ rather than $X_0$, by $(X_0^\vee, W_0^\vee, \pi_0^\vee)$. Alternatively, by Remark \ref{floer_omit_rmk}, the reader may first skip all of \S \ref{s_family_review}, \S \ref{ss_Clifford_Chekanov}, \S \ref{ss_gluing_with_symmetry}, \S \ref{ss_miror_space} to get some preliminary ideas.


\subsection{Affinoid tropical charts for the Clifford and Chekanov chambers}
\label{ss_Clifford_Chekanov}

In \S \ref{ss_action_coordinates_-}, we have introduced the two integral affine charts $\chi_\pm$ on $B'_\pm$.
By Proposition \ref{trivial_translation_prop}, we have the following two affinoid tropical charts on $(\pi)^{-1}(B_\pm')$: (c.f. (\ref{affinoid_torus_explicit_eq}))
\begin{equation}
	\label{tau_+_eq}
	\tau_+: \bigcup_{q\in B'_+} H^1(L_q; U_\Lambda) \to (\Lambda^*)^n, \qquad \mathbf y\mapsto \big(
	T^{q_1} \mathbf y^{\sigma_1},\dots, T^{q_{n-1}} \mathbf y^{\sigma_{n-1}}, T^{\psi_+(q)} \mathbf y^{\partial\beta_n}
	\big)
\end{equation}
\begin{equation}
	\label{tau_-_eq}
	\tau_- : \bigcup_{q\in B'_-} H^1(L_q; U_\Lambda) \to (\Lambda^*)^n, \qquad \mathbf y \mapsto \big(T^{q_1} \mathbf y^{\sigma_1},\dots, T^{q_{n-1}} \mathbf y^{\sigma_{n-1}}, T^{\psi_-(q)} \mathbf y^{\partial\hat\beta} \big)
\end{equation}
The images of $\tau_\pm$ in $(\Lambda^*)^n$ are just given by the explicit integral affine charts $\chi_\pm$ (\ref{chi_-_eq}, \ref{chi_+_eq}) as follows:
\begin{equation}
	\label{T_+-_eq}
T_\pm:=\trop^{-1}(\chi_\pm(B'_\pm))
\end{equation}
which are the analytic open subdomains in $(\Lambda^*)^n$. Clearly, we have
$
	\trop\circ \ \tau_\pm = \chi_\pm \circ \pi_0^\vee
$; that is,
\[
\xymatrix{
	(\pi_0^\vee)^{-1}(B'_\pm) \ar[rr]^{\tau_\pm} \ar[d]^{\pi_0^\vee} & & T_\pm \ar[d]^{\trop} \\
	B'_\pm \ar[rr]^{\chi_\pm} & & \chi_\pm (B'_\pm)
}
\]

\subsection{Gluing with a symmetry}
\label{ss_gluing_with_symmetry}

By Theorem \ref{Main_theorem_thesis_thm}, the above two local expressions over $B'_\pm$ must be glued by some nontrivial automorphisms over $B'_+\cap B'_-\equiv \bigsqcup_{1\le \ell\le n}\mathscr N_\ell$. We denote them by
\begin{equation}
	\label{phi_ell_eq}
	\phi_\ell : \bigcup_{q\in \mathscr N_\ell}  H^1(L_q; U_\Lambda) \to \bigcup_{q\in\mathscr N_\ell} H^1(L_q; U_\Lambda)
\end{equation}
Be cautious that, despite of the same underlying sets, the two sides should refer to the two affinoid tropical charts $\tau_+$ and $\tau_-$ separately; in fact, we adopt a simplified expression as in (\ref{gluing_map_phi_domain_target}).
Given a point $\mathbf y\in H^1(L_q; U_\Lambda)$, the image point $\tilde {\mathbf y}:=\phi_\ell(\mathbf y)$ is contained in $H^1(L_q; U_\Lambda)$ and satisfies (c.f. (\ref{gluing_map_pointwise}))
\begin{equation}\label{gluing_map_z}
\tilde{\mathbf y}^\alpha =\mathbf y^\alpha\exp \langle \alpha, \pmb{\mathfrak F}_\ell  (\mathbf y)  \rangle
\end{equation}
for a formal power series $\pmb{\mathfrak F}_\ell(Y)$. Basically, the mirror analytic space $X^\vee_0$ is completely determined by these gluing maps $\phi_\ell$. There is no general algorithm for the gluing maps, and only the existence and uniqueness are proved in \cite{Yuan_I_FamilyFloer}.
 But, in our case, we first have a natural fiber-preserving $T^{n-1}$-action (\ref{S_1_action}), and the $T^{n-1}$-symmetry makes the gluing maps much simpler: (c.f. \cite{Yuan_e.g._FamilyFloer} or \cite[Theorem 8.4]{AAK_blowup_toric})

\begin{lem}
	\label{F_gamma_lem}
	$\langle \sigma_k , \pmb {\mathfrak F}_\ell \rangle=0$, for $1\le k\le n-1$ and $1\le \ell \le n$. In particular, if we set $\tilde {\mathbf y}=\phi_\ell(\mathbf y)$, then
	\[
	\tilde{\mathbf y}^{\sigma_k}=\mathbf y^{\sigma_k}
	\]
\end{lem}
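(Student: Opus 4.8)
The plan is to use the fiber-preserving Hamiltonian $T^{n-1}$-action (\ref{S_1_action}) to make the entire Floer package that computes $\pmb{\mathfrak F}_\ell$ equivariant, and then to read off the vanishing of its $\sigma_k$-components from the geometry of the $T^{n-1}$-orbits inside a fiber $L_q$. First I would unwind the definition (\ref{definition_formula_F_ell_eq}), $\pmb{\mathfrak F}_\ell=\sum_{\mu(\beta)=0}T^{E(\beta)}Y^{\partial\beta}\f_{0,\beta}$ with $\f_{0,\beta}\in H^1(L_q)$, and reduce the claim $\langle\sigma_k,\pmb{\mathfrak F}_\ell\rangle=0$ to the single statement $\langle\sigma_k,\f_{0,\beta}\rangle=0$ for every Maslov-$0$ class $\beta$ and every $1\le k\le n-1$. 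Granting this, substituting $\alpha=\sigma_k$ into (\ref{gluing_map_z}) immediately gives $\tilde{\mathbf y}^{\sigma_k}=\mathbf y^{\sigma_k}\exp\langle\sigma_k,\pmb{\mathfrak F}_\ell(\mathbf y)\rangle=\mathbf y^{\sigma_k}$.

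Since the $T^{n-1}$-action in (\ref{S_1_action}) is holomorphic for the standard complex structure on $\mathbb C^n$ and preserves every fiber $L_q$ of $\pi$, I would choose the almost complex structure, all virtual perturbation / Kuranishi data, and the Lagrangian isotopy underlying Fukaya's trick across $\mathscr N_\ell$ to be $T^{n-1}$-equivariant. Because the gluing map $\phi_\ell$, hence the power series $\pmb{\mathfrak F}_\ell$, is independent of all such choices by \cite{Yuan_I_FamilyFloer} (cf. Remark \ref{rigor_affinoid_gluing_rmk}), it is legitimate to compute $\pmb{\mathfrak F}_\ell$ using one equivariant model. Then every moduli space $\overline{\mathcal M}(\beta)$ of Maslov-$0$ disks bounding a fiber $L_q$ carries a $T^{n-1}$-action, the boundary evaluation $\ev\colon\overline{\mathcal M}(\beta)\to L_q$ is $T^{n-1}$-equivariant, and $T^{n-1}$ acts on $L_q\cong T^n$ by the restriction of (\ref{S_1_action}) --- freely, since no point of a fiber over $B_0$ lies in $\{z_k=z_n=0\}$, with the orbit of the $k$-th circle $S^1_k$ representing $\sigma_k$.

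The geometric input is then the following. Choosing the virtual fundamental chain of $\overline{\mathcal M}(\beta)$ to be $S^1_k$-equivariant (possible by averaging over the compact group $S^1_k$), its pushforward $\ev_*[\overline{\mathcal M}(\beta)]^{\mathrm{vir}}$ is an $S^1_k$-invariant cycle in $L_q$; since $S^1_k$ acts freely, this cycle is a ``tube'' $p^{-1}(\bar Z)$ over a cycle $\bar Z$ in the quotient torus $\bar L=L_q/S^1_k$ along the projection $p\colon L_q\to\bar L$, so its Poincar\'e dual lies in $p^{*}H^{*}(\bar L)$ and therefore pairs trivially with the fiber class $\sigma_k$ by the projection formula. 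Hence $\langle\sigma_k,\f_{0,\beta}\rangle=0$ for all Maslov-$0$ classes $\beta$, and the lemma follows. The step I expect to be the real obstacle is the equivariant virtual perturbation theory: arranging the Kuranishi structures (and the homological-perturbation minimal model entering the definition of $\f_{0,\beta}$) to be genuinely $T^{n-1}$-equivariant without losing the transversality that the open Gromov--Witten counts require. This is the familiar technical point of toric open Gromov--Witten theory; I would invoke the existing treatments --- the Cho--Oh and Fukaya--Oh--Ohta--Ono framework and, in this mirror-gluing context, \cite[Theorem 8.4]{AAK_blowup_toric} and \cite{Yuan_e.g._FamilyFloer} --- rather than redo them. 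A secondary subtlety, that the $A_\infty$-homotopy $\f$ is only well-defined up to ud-homotopy, is harmless here precisely because $\phi_\ell$ is itself choice-independent.
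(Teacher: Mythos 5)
Your argument is correct in outline, but it is a genuinely different route from the one taken in the paper. The paper never invokes equivariant perturbation theory: it uses the Lefschetz fibration $w=z_1\cdots z_n$ and the fact (via the maximum-principle argument of \cite[Lemma 5.4]{AuTDual}) that every Maslov-$0$ holomorphic disk $u$ bounded by a fiber $L_q$ over the wall $H_\ell$ satisfies $w\circ u\equiv 0$; hence $\partial u$ lies in the subtorus $T'=L_q\cap\bigcup_i D_i\cong T^{n-1}$, the evaluation maps of all relevant moduli are supported in $T'$, and the resulting $H^1(L_q)\cong H_{n-1}(L_q)$ coefficients of $\pmb{\mathfrak F}_\ell$ are proportional to the class dual to $T'$, which pairs trivially with $\sigma_k$ since $\sigma_1,\dots,\sigma_{n-1}$ span $T'$; the conclusion then follows from (\ref{gluing_map_z}) exactly as you say. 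What your approach buys is generality: it only uses the free fiberwise $T^{n-1}$-symmetry (\ref{S_1_action}) and the choice-independence of $\phi_\ell$, without needing to know where the Maslov-$0$ disks actually live, so it would apply whenever such a symmetry exists. What it costs is precisely the step you flag: you must make the whole package behind (\ref{definition_formula_F_ell_eq}) --- the parametrized moduli of Fukaya's trick, the virtual chains, and the homotopy-transfer data --- genuinely $T^{n-1}$-equivariant inside the framework of \cite{Yuan_I_FamilyFloer}, which is a nontrivial commitment the paper's support argument avoids entirely. One point to state more carefully: ``an $S^1_k$-invariant cycle under a free action is a tube'' is not literally true for arbitrary invariant chains or currents (an averaged section class is an invariant current pairing nontrivially with the orbit class); what you actually need, and what your setup does give, is that the virtual chain is the pushforward of an equivariant map from a space on which $S^1_k$ acts freely, so its support is a union of orbits over the image of $\overline{\mathcal M}(\beta)/S^1_k$, whose dimension is one less than $\dim\bar L$; a generic orbit representing $\sigma_k$ therefore misses it and the intersection pairing vanishes. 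With that rephrasing, and granting the equivariant virtual machinery you cite, your proof goes through.
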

\begin{proof}
Consider the Lefschetz fibration $w=z_1\cdots z_n$.
Let $u$ be a holomorphic disk bounded by a $\pi$-fiber $L=L_q$ for some $q$ in the wall $H_\ell$, and $\gamma:=[u]$ has Maslov index 0. Then, one can show $w\circ u\equiv 0$ (c.f. \cite[Lemma 5.4]{AuTDual}). Thus, the boundary $\partial u$ is contained in the sub-torus $T':=L\cap \bigcup_i D_i \cong T^{n-1}$, and the evaluation map $\ev: \mathcal M_{1,\gamma}(L)\to L$ is supported in this sub-torus $T'$.
Recall that a monomial in $\pmb{\mathfrak F}_\ell$ is a class in $H^1(L)\cong H_{n-1}(L)$ contributed by the counts of Maslov-zero disks. Namely, it is given by the pushforward of the evaluation map of the moduli spaces and is therefore dual to $T'\cong T^{n-1}$ in $L\cong T^n$.
Since $\pi_1(T')=H_1(T')\cong \mathbb Z^{n-1}$ is generated by $\sigma_1,\dots ,\sigma_{n-1}$, this means
the $\pmb{\mathfrak F}_\ell$ vanishes along these directions. Finally, we use (\ref{gluing_map_z}).
\end{proof}

We can express the $\phi_\ell$'s explicitly with respect to the two affinoid tropical charts $\tau_\pm$. Define:
\begin{equation}
	\label{Phi_ell_eq}
	\Phi_\ell:= \tau_-\circ \phi_\ell \circ \tau_+^{-1}
	:
	T_{+}^\ell \to  T_{-}^\ell
\end{equation}
where $T_\pm^\ell:=\trop^{-1}(\chi_\pm(\mathscr N_\ell))$ are analytic open subdomains in $T_\pm\subset (\Lambda^*)^n$.
Specifically, there exist formal power series $f_\ell(y_1,\dots, y_{n-1})$ for $1\le \ell \le n$ such that
\begin{equation}
	\label{Phi_ell_explicit_eq}
	\Phi_\ell(y_1,\dots, y_n)=
	\begin{cases}
		(y_1,\dots, y_{n-1}, y_ky_n\exp(f_k(y_1,\dots, y_{n-1})) 
		& \text{if} \ (y_1,\dots, y_n) \in T_+^k, \ 1\le k<n \\
		(y_1,\dots, y_{n-1}, y_n \exp(f_n(y_1,\dots, y_{n-1})) 
		& \text{if} \ (y_1,\dots, y_n)\in T_+^n
	\end{cases}
\end{equation}
Indeed, the first $n-1$ coordinates are preserved by Lemma \ref{F_gamma_lem}. Since the boundary of any Maslov-0 disk is spanned by $\sigma_k$ ($1\le k<n$), the definition formula (\ref{definition_formula_F_ell_eq}) implies that each $f_\ell$ does not involve $y_n$.
We also recall that $\partial\hat\beta=\partial\beta_n$ over $\mathscr N_n$ but $\partial\hat\beta=\sigma_k+\partial\beta_n$ over $\mathscr N_n$.

\subsection{Mirror analytic space}
\label{ss_miror_space}
As there is no Maslov-2 holomorphic disks in $X=\mathbb C^n\setminus \mathscr D$ bounded by the $\pi$-fibers, the mirror Landau-Ginzburg superpotential vanishes $W^\vee\equiv 0$ identically.
But, as indicated in \S \ref{ss_Maslov_zero_determine}, we can choose some larger ambient symplectic manifold $\overline X$ without adding new Maslov-0 disks.
No matter what $\overline X$ is, the structure of the mirror affinoid torus fibration $(X_0^\vee, \pi_0^\vee)$ will stay the same. In contrast, there can be new Maslov-2 disks that give rise to a new mirror superpotential $W^\vee$ on $X_0^\vee$.

Here we are mainly interested in the case $\overline X=\mathbb C^n$. But, we will study many others like $\overline X=\mathbb {CP}^n$ in \S \ref{s_folklore}.
We place the Gross Lagrangian fibration $\pi_0:X_0\to B_0$ (\S \ref{ss_Gross_fib}) in $\mathbb C^n$. By \S \ref{ss_Maslov_zero_determine}, 
the mirror space and the affinoid torus fibration associated to $(X,\pi_0)$ and $(\mathbb C^n, \pi_0)$ is actually the same, denoted by $(X_0^\vee, \pi_0^\vee)$.
But, the latter is equipped with an extra superpotential $W^\vee:=W^\vee_{\mathbb C^n}$.

For $q\in B_+$, the fiber $L_q$ is Hamiltonian isotopic to a product torus in $(\mathbb C^*)^n$. It follows from \cite{Cho_Oh} that the open GW invariants (\S \ref{ss_review_superpotential}) are $\mathsf n_{\beta_j}=1$ for the disks $\beta_j$'s (\S \ref{ss_topological_disk}).
For $q\in B_-$, we use the maximal principle to show the only nontrivial open GW invariant is $\mathsf n_{\hat\beta}=1$ (see \cite{AuTDual} \cite[Lemma 4.31]{CLL12}).
Now, by (\ref{superpotential_in_chart_pointwise_eq}), the restrictions $W^\vee_{\pm}$ of $W^\vee$ on the two chambers $(\pi_0^\vee)^{-1}(B_\pm)$ are as follows:
\begin{equation}
	\label{W_C_eq}
	\begin{aligned}
		W^\vee_{+}(\mathbf y)
	&=	\textstyle
	\sum_{j=1}^n T^{E(\beta_j)} \mathbf y^{\partial\beta_j} \mathsf n_{\beta_j} 
	=
	 	\textstyle T^{\psi_+(q)} \mathbf y^{\partial\beta_n} \big(1+ \sum_{k\neq n} T^{q_k} \mathbf y^{\sigma_k}\big)
	  \\ 
	W^\vee_{-}(\mathbf y) &= T^{E(\hat\beta)} \mathbf y^{\partial\hat\beta}
	\mathsf n_{\hat\beta}
	= T^{\psi_-(q)} \mathbf y^{\partial\hat\beta}
	\end{aligned} 
\end{equation}

Moreover, in view of Proposition \ref{trivial_translation_prop}, both of them can be extended slightly to the thickened domains
$(\pi_0^\vee)^{-1}(B'_\pm)$.
Then, for the affinoid tropical charts $\tau_\pm$ (\ref{tau_+_eq}), we write 
$
	\mathcal W_{\pm}:= W^\vee_{\pm}\circ \tau_\pm^{-1}
$
and obtain:
\begin{equation}
	\label{W_C_explicit}
	\begin{aligned}
		\mathcal W_{+} (y) &= y_n(1+y_1+\cdots+y_{n-1}) && \text{if} \ y=(y_1,\dots, y_n) \in T_+\\
		\mathcal W_{-} (y) &= y_n && \text{if} \ y=(y_1,\dots, y_n) \in T_-
	\end{aligned}
\end{equation}

According to the wall-crossing property (\ref{superpotential_gluing_map_eq}), we must have
$\mathcal W_-(\Phi_\ell(y))=\mathcal W_+(y)$ for any $y\in T_+^\ell$. Along with (\ref{Phi_ell_explicit_eq}), this completely determines all $\Phi_\ell$ for any $1\le \ell \le n$ as follows:
\begin{equation}
	\label{Phi_ell_explicit_determined_eq}
\Phi_\ell: 
T_{+}^\ell \to T_{-}^\ell \quad
\quad
(y_1,\dots, y_n) \mapsto \big(y_1,\dots, y_{n-1}, y_n(1+y_1+\cdots +y_{n-1}) \big)
\end{equation}
Remark that although the $\Phi_\ell$'s have the same formula, the domains and targets differ and depend on $\ell$.
Note that
$\Phi_\ell\circ \tau_+=\tau_-\circ \phi_\ell $ and $\trop\circ \ \Phi_\ell =\upchi_\ell\circ \trop$.


In conclusion, the mirror analytic space $X_0^\vee$ is isomorphic to
	the quotient of the disjoint union $T_+\sqcup T_-$ modulo the relation $\sim$: we say $y\sim y'$ if there exists some $1\le \ell \le n$ such that $y\in T_{+}^\ell$, $y'\in T_{-}^\ell$, and $\Phi_\ell(y)=y'$.
That is to say, we have an identification
\begin{equation}
	\label{identification_Floer_use_eq}
	X_0^\vee \textstyle \cong T_+\sqcup T_-/ \sim
\end{equation}
which is the adjunction space obtained by gluing $T_+$ and $T_-$ via all these $\Phi_\ell$'s.
By (\ref{integral_affine_trans_ell_eq}), one can check 
$
\trop  \circ \ \Phi_\ell=  \chi_-\circ \chi_+^{-1}\circ \trop
$ on the domains, so the dual affinoid torus fibration $\pi_0^\vee:X_0^\vee \to B_0$ can be identified with the gluing of the two maps $\chi_\pm^{-1}\circ \trop$ via the $\Phi_\ell$'s.
Under this identification, if we write $\pi_0^\vee=(\pi^\vee_1,\dots, \pi^\vee_n)$, then for $y=(y_1,\dots, y_n)\in T_\pm$, one can check
\begin{equation}
	\label{identification_pi_check_property_eq}
\mathsf v(y_k) = \pi_k^\vee (y)  \quad (1\le k< n) \qquad \text{and} \quad \val(y_n)= \psi_\pm(\pi_0^\vee(y))
\end{equation}

	\[
	\xymatrix{
		& & & B_0
		\\
		T_+  
		\ar[rr]
		\ar@/^1pc/[rrru]^{\chi_+^{-1}\circ \trop} & & 
		X_0^\vee \equiv T_+\sqcup T_- / \sim
		\ar@{-->}[ru]^{\pi_0^\vee}
		& 
		\\
		\bigsqcup_\ell T_+^\ell
		 \ar[rr]^{\sqcup_\ell \Phi_\ell}
		 \ar@{^{(}->}[u]^{\mathrm{Incl}} & & T_- \ar[u] 	
		 \ar@/_1pc/[ruu]_{\chi_-^{-1}\circ \trop}
	}
	\]

\subsection{The analytic embedding $g$}
\label{ss_g_analytic_embedding}

	From now on, we will always identify the $X_0^\vee$ with $T_+\sqcup T_-/\sim$ and identify the $\pi_0^\vee$ with the one obtained as above (\ref{identification_Floer_use_eq}, \ref{identification_pi_check_property_eq}). See the above diagram.
Next, we define
\begin{equation}
	\label{g+}
	g_+:  T_+ \to \Lambda^2 \times (\Lambda^*)^{n-1}
\end{equation}
\[
(y_1,\dots,y_{n-1},y_n)
\mapsto
\left(
\frac{1}{y_n} \ , \   y_n \ h \ , \  y_1,\dots, y_{n-1}
\right)
\]
and define
\begin{equation}
	\label{g-}
	g_-: T_-\to   \Lambda^2 \times (\Lambda^*)^{n-1} 
\end{equation}
\[
(y_1,\dots,y_{n-1},y_n)\mapsto \left(
\frac{h}{y_n} \ ,  \ y_n \ , \  y_1,\dots, y_{n-1} \right)
\]

Recall that $T_\pm\subsetneq (\Lambda^*)^n$ correspond to the Clifford and Chekanov tori respectively.
For all $1\le \ell\le n$, it is direct to check that
$
g_+=g_-\circ \Phi_\ell
$
on their various domains.
Hence, by (\ref{identification_Floer_use_eq}), we can glue $g_\pm$ to obtain an embedding analytic map
\begin{equation}
	\label{g_analytic_map_eq}
g :X_0^\vee\to \Lambda^2\times (\Lambda^*)^{n-1}
\end{equation}
such that the following diagrams commute:
	\[
\xymatrix{
	& & &  \Lambda^2\times (\Lambda^*)^{n-1} 
	\\
	T_+  
	\ar[rr]
	\ar@/^1pc/[rrru]^{g_+} & & 
	X_0^\vee \equiv T_+\sqcup T_- / \sim
	\ar@{-->}[ru]^{g}
	& 
	\\
	\bigsqcup_\ell T_+^\ell
	\ar[rr]^{\sqcup_\ell \Phi_\ell}
	\ar@{^{(}->}[u]^{\mathrm{Incl}} & & T_- \ar[u] 	
	\ar@/_1pc/[ruu]_{g_-}
}
\]

The image $g(X_0^\vee)$ is clearly contained in the algebraic variety
$
Y$ defined by $x_0 x_1 =1+y_1+\cdots+y_{n-1}$.

\begin{rmk}
	The above formula of $g$ is given by Gross-Hacking-Keel in \cite[Lemma 3.1]{GHK_birational}. 
	However, the difference is that $T_\pm$ are merely analytic subdomains of the torus $(\Lambda^*)^n$ in the finer Berkovich topology, as opposed to the Zariski topology, making them inaccessible using just algebraic geometry.
	Finally, this inspires us to modify Kontsevich-Soibelman's model \cite[Page 44-45]{KSAffine} for the sake of integral affine structure matching (cf. \S \ref{ss_literature}).
	In turn, unlike \cite{GHK_birational}, we follow the non-archimedean perspective in \cite{KSAffine} more closely.
\end{rmk}

\begin{rmk}
	\label{floer_omit_rmk}
	\textit{The only place we use the family Floer theory \cite{Yuan_I_FamilyFloer} is the identification $X_0^\vee\cong T_+\sqcup T_-/\sim$ in (\ref{identification_Floer_use_eq}) together with the corresponding characterization for $\pi_0^\vee$. }
	The reader's guide is as follows: all of the analytic subdomains $T_\pm$ in (\ref{T_+-_eq}), the gluing relation $\sim$ from  $\Phi_\ell$ in (\ref{Phi_ell_eq}, \ref{Phi_ell_explicit_determined_eq}), the dual fibration $f$ in (\ref{Y_analytic_domain_eq}), and the analytic embedding $g$ in (\ref{g_analytic_map_eq}) can be defined directly, regardless of the whole \S \ref{s_family_review}.
	The key Theorem \ref{fibration_preserving_thm} below is also a purely non-archimedean statement.
	All these non-Floer-theoretic ingredients are already sufficient to prove a weaker version of Theorem \ref{Main_this_paper_fundamental_example} dropping the T-duality condition (b) in Definition \ref{SYZmirror_defn}.
Moreover, we stress again that it is still very difficult to achieve the various coincidences in Definition \ref{SYZmirror_defn} (a).
	Compare Remark \ref{only_place_Floer_intro_rmk} and the discussions around (\ref{explicit_formula_introduction_eq}).
\end{rmk}

\subsection{Fibration preserving}
\label{ss_fibration_preserving}

Recall
$
F=(F_0,F_1, G_1,\dots, G_{n-1}):Y\to\mathbb R^{n+1}$
is defined in \S \ref{ss_NA_integrable_affinoid_torus} as follows: for $z=(x_0,x_1,y_1,\dots,y_{n-1})$,
\[
\begin{aligned}
	F_0(z)&=\min\{  
	\val(x_0),   -\psi_0(\val(y_1),\dots, \val(y_{n-1}) )+\min\{0,\val(y_1),\dots, \val(y_{n-1})\}  \} \\
	F_1(z)&= \min \{ \val(x_1), \ \ \ \psi_0(\val(y_1),\dots, \val(y_{n-1})) \} \\
	G_k(z)&=\val(y_k) \qquad \text{for} \  1\le k< n
\end{aligned}
\]
The next is the key result that puts all the previous constructions together and will prove Theorem \ref{Main_this_paper_fundamental_example}.

\begin{thm}
	\label{fibration_preserving_thm}
	$F\circ g=j\circ \pi_0^\vee$. Namely, we have the following commutative diagram
	\[
	\xymatrix{
		X_0^\vee \ar[rr]^{g\quad } \ar[d]^{\pi_0^\vee} & & Y \ar[d]^F \\
		B_0 \ar[rr]^{j} & & \mathbb R^{n+1}
	}
	\]
\end{thm}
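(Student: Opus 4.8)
The plan is to verify the identity chart by chart. Recall from (\ref{identification_Floer_use_eq}) that $X_0^\vee\cong T_+\sqcup T_-/\!\sim$, that $g$ is glued from $g_\pm$ in (\ref{g+})--(\ref{g-}), and that under this identification $\pi_0^\vee$ restricts on the chart $T_\pm$ to $\chi_\pm^{-1}\circ\trop$ (by the diagram in \S\ref{ss_Clifford_Chekanov}; see also (\ref{identification_pi_check_property_eq})). Since $B'_+\cup B'_-=B_0$, the images of $T_+$ and $T_-$ cover $X_0^\vee$, so it suffices to prove
\[
F\circ g_\pm \;=\; j\circ\chi_\pm^{-1}\circ\trop \qquad\text{on } T_\pm .
\]
The two computations automatically agree on the overlaps $T_+^\ell$, since both sides are already defined on $X_0^\vee$; alternatively one may use $g_+=g_-\circ\Phi_\ell$ from \S\ref{ss_g_analytic_embedding} together with $\trop\circ\Phi_\ell=\upchi_\ell\circ\trop$ coming from (\ref{Phi_ell_explicit_determined_eq}).

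For the chart $T_+$, fix $y=(y_1,\dots,y_n)\in T_+$ and put $q:=\chi_+^{-1}(\trop(y))$, so that $q_k=\val(y_k)$ for $k<n$, $\val(y_n)=\psi_+(q)$, $\bar q=(q_1,\dots,q_{n-1})$, and write $h=1+y_1+\cdots+y_{n-1}$. By (\ref{g+}) the point $g_+(y)\in Y$ has coordinates $x_0=y_n^{-1}$, $x_1=y_nh$ (with $x_0x_1=h$ as it must), and $y$-coordinates $y_1,\dots,y_{n-1}$; hence the last $n-1$ components of $F\circ g_+(y)$ are $G_k=\val(y_k)=q_k$, matching those of $j(q)$. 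For the first two components use $\psi_+(q)=\psi(q)-\min\{0,\bar q\}$ on $B'_+$ from (\ref{psi_global_in_B_eq}): then $\val(x_0)=-\psi_+(q)=-\psi(q)+\min\{0,\bar q\}$, which gives at once $F_0(g_+(y))=\min\{-\psi(q),-\psi_0(\bar q)\}+\min\{0,\bar q\}=\theta_0(q)$ by (\ref{theta_0_1_eq}); and $\val(x_1)=\psi_+(q)+\val(h)=\psi(q)+\bigl(\val(h)-\min\{0,\bar q\}\bigr)$, so that
\[
F_1(g_+(y))=\min\bigl\{\psi(q)+(\val(h)-\min\{0,\bar q\}),\ \psi_0(\bar q)\bigr\},
\qquad
\theta_1(q)=\min\{\psi(q),\psi_0(\bar q)\}.
\]
The chart $T_-$ is treated in the mirror way via (\ref{g-}): now $x_1=y_n$ and $x_0=h\,y_n^{-1}$ with $\val(y_n)=\psi_-(q)=\psi(q)$ on $B'_-$, so $F_1(g_-(y))=\min\{\psi(q),\psi_0(\bar q)\}=\theta_1(q)$ immediately, while $F_0(g_-(y))=\min\{\val(h)-\psi(q),\,-\psi_0(\bar q)+\min\{0,\bar q\}\}$ is to be compared with $\theta_0(q)=\min\{-\psi(q)+\min\{0,\bar q\},\,-\psi_0(\bar q)+\min\{0,\bar q\}\}$.

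The one substantive point is that these minima still match at places where the non-archimedean triangle inequality $\val(h)\ge\min\{0,q_1,\dots,q_{n-1}\}$ is \emph{strict}; strictness forces the minimum to be attained at least twice, i.e.\ $\bar q\in\Pi$. When $\bar q\in\Pi$ the relevant fiber lies over $q=(\bar q,q_n)\in B_0$, so $q_n\neq0$; moreover, since each collar $\mathscr N_\ell$ is a sufficiently small neighborhood of the cell $H_\ell\subset(\mathbb R^{n-1}\setminus\Pi)\times\{0\}$ and hence lies over $\mathbb R^{n-1}\setminus\Pi$, a point of $B'_+$ over $\Pi$ must actually lie in $B_+$ and a point of $B'_-$ over $\Pi$ in $B_-$. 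On $T_+$ this gives $q_n>0$, whence $\psi(q)=\psi(\bar q,q_n)>\psi(\bar q,0)=\psi_0(\bar q)$ by Lemma \ref{symplectic_area_increasing_lem}; the first entry of each of the two minima above then exceeds $\psi_0(\bar q)$, so both $F_1(g_+(y))$ and $\theta_1(q)$ equal $\psi_0(\bar q)$. Symmetrically, on $T_-$ we get $q_n<0$, hence $\psi(q)<\psi_0(\bar q)$, which makes the first entry of each minimum in the $F_0$-comparison strictly exceed the second, so both equal $-\psi_0(\bar q)+\min\{0,\bar q\}$. Away from $\Pi$ one has $\val(h)=\min\{0,\bar q\}$ exactly and all four components match by direct substitution. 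This $\Pi$-matching, resting on the triangle inequality and on the monotonicity of Lemma \ref{symplectic_area_increasing_lem}, is the only delicate step; the remainder is just unwinding (\ref{g+})--(\ref{g-}), (\ref{psi_+_psi_-_min_eq}) and the characterization (\ref{identification_pi_check_property_eq}) of $\pi_0^\vee$, so I expect no further obstacle. As a byproduct one obtains $g(X_0^\vee)\subset\mathscr Y$ (because $\val(x_1)\ge\psi(q)>0$), and, since $F|_{\mathscr Y}=j\circ f$ with $j$ injective, the theorem is equivalent to $f_0\circ g=\pi_0^\vee$, i.e.\ Proposition \ref{Main_proposition_over_B_0}.
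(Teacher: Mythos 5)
Your proposal is correct and follows essentially the same route as the paper's own proof: a chart-by-chart check on $T_\pm$ using the identification (\ref{identification_pi_check_property_eq}), with the only delicate point being the ambiguity of $\val(1+y_1+\cdots+y_{n-1})$, resolved exactly as in the paper's cases (1a)--(2b) via the non-archimedean triangle inequality, the fact that strictness forces $\bar q\in\Pi$ (hence $q_n>0$ on $B'_+$ and $q_n<0$ on $B'_-$), and the monotonicity of Lemma \ref{symplectic_area_increasing_lem}. Your explicit justification that the collars $\mathscr N_\ell$ avoid $\Pi\times\mathbb R$ makes precise a step the paper leaves implicit, but the argument is the same.
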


\begin{proof} 
Fix $\mathbf y$ in $X_0^\vee\equiv T_+\cup T_-/\sim$ (\ref{identification_Floer_use_eq}), and set $q=\pi_0^\vee(\mathbf y)$. Then,
$
j \circ \pi_0^\vee(\mathbf y)=j(q)=(\theta_0(q),\theta_1(q),  \bar q)
$
where (recalling (\ref{j_homeo_eq}))
\begin{align*}
	\theta_0(q)
&:=
\min\{ -\psi(q) , -\psi_0( \bar q ) \}  + \min \{0, \bar q \} 
\\
\theta_1(q) 
&:=
\min\{\ \ \ \psi(q) ,  \ \ \ \psi_0( \bar q )\}
\end{align*}
where $\psi(q)$ and $\psi_0(\bar q) \equiv \psi(\bar q,0)$ are given in (\ref{psi_global_in_B_eq},\ref{psi_0_eq}).
Recall that $\Pi$ is the tropical hypersurface associated to $\min\{0, \bar q\}$ (see \S \ref{ss_Gross_fib}).
We aim to check $(\theta_0(q),\theta_1(q),\bar q)$ always agrees with $F\circ g(\mathbf y)$:

\begin{enumerate}
	\item If $q\in B'_+$, then $\mathbf y$ is identified with a point $y=(y_1,\dots, y_n)$ in $T_+$. By (\ref{identification_pi_check_property_eq}), $\val(y_k)=q_k$ for $1\le k<n$ and $\val(y_n)=\psi_+(q)$.
	Therefore, as desired, we get $G_k(g_+(y))=q_k$, and by (\ref{psi_global_in_B_eq}),
	\begin{align*}
	F_0(g_+(y))
	=\min\{ 
	-\psi_+(q), -\psi_0(\bar q)+\min\{0,\bar q\}
	\} = \min\{-\psi(q),-\psi_0(\bar q)\} + \min\{0,\bar q\}=\theta_0(q)
	\end{align*}
However,
	\begin{align*}
	F_1(g_+(y))
	=
	\min\{ \psi_+( q) +\val(1+y_1+\cdots+y_{n-1}),  \ \ \ \psi_0(\bar q)\}
	\end{align*}
and we need to further deal with the ambiguity of $\val(1+y_1+\cdots+y_{n-1})$ as follows:
\begin{enumerate}
	\item [(1a)] If $\bar q\in \Pi$, then since $q\in B'_+$, we must have $q_n>0$. Using the non-archimedean triangle inequality and Lemma \ref{symplectic_area_increasing_lem} infers that
		$\psi_+(q)+\val(1+y_1+\cdots+y_{n-1}) \ge \psi_+(q)+\min \{0,\bar q\} \equiv \psi(q) > \psi_0(\bar q)$.
	Hence, 
	\[F_1(g_+(y))=\psi_0(\bar q)=\min\{\psi(q),\psi_0(\bar q)\}=\theta_1(q)
	\]
	\item [(1b)] If $\bar q\notin\Pi$, then the minimum of the values $\val(y_k)=q_k$ for $1\le k<n$ cannot be attained twice. Thus,
	\[
	\val(1+y_1+\cdots+y_{n-1})=\min\{ \val(1), \val(y_1),\dots, \val(y_{n-1})\}=\min\{0,\bar q\}
	\]
	and $F_1(g_+(y))=\theta_1(q)$.
\end{enumerate}
	
	\item If $q\in B'_-$, then $\mathbf y$ is identified with a point $y=(y_1,\dots, y_n)$ in $T_-$. By (\ref{identification_pi_check_property_eq}), $\val(y_k)=q_k$ for $1\le k<n$ and $\val(y_n)=\psi_-(q)$. Hence, as desired, we also get $G_k(g_-(y))=q_k$ and 
\[
F_1(g_-(y))=\min\{  \psi_-(q),  \psi_0(\bar q)  \}
\equiv \min\{ \psi(q),\psi_0(\bar q) \} 
=\theta_1(q) \]
However, in turn,
	\[
	F_0(g_-(y))= \min\{  - \psi_-(q) + \val(1+y_1+\cdots+y_{n-1}), -\psi_0(\bar q)+\min\{0,\bar q\}\}
	\]
has some ambiguity, and we similarly argue by cases:
	\begin{enumerate}
		\item [(2a)] If $\bar q\in \Pi$, then $q_n<0$. By the non-archimedean triangle inequality and Lemma \ref{symplectic_area_increasing_lem}, we similarly obtain $-\psi_-(q) + \val(1+y_1\cdots+y_{n-1}) \ge-\psi_0(\bar q)+ \min\{0,\bar q\}$. Hence 
		\[
		F_0(g_-(y))=-\psi_0(\bar q)+\min\{0,\bar q\}  = \min\{-\psi(q), -\psi_0(\bar q)\} +\min\{0,\bar q\}=\theta_0(q)
		\]
		\item [(2b)] If $\bar q\notin \Pi$, then we similarly get 
		$	\val(1+y_1+\cdots+y_{n-1})=\min\{0,\bar q\}$
		and $F_0(g_-(y))=\theta_0(q)$.
	\end{enumerate}
\end{enumerate}
(In the sense of Remark \ref{floer_omit_rmk}, all the proof here does not rely on any family Floer theory in \S \ref{s_family_review}.)
\end{proof}

\begin{proof}[Proof of Theorem \ref{Main_this_paper_fundamental_example}]
Recall the $f: \mathscr Y\to B$ constructed in \S \ref{ss_defn_f_B_fibration} satisfies that $F=j\circ f$. 
The key Theorem \ref{fibration_preserving_thm} above implies that the image $g(X_0^\vee)$ is exactly given by the total space of the affinoid torus fibration $F$ restricted over $j(B_0)$. In other words, this image coincides with the $\mathscr Y_0$ defined in \S \ref{ss_defn_f_B_fibration}.
Moreover, it also implies that
$
\pi_0^\vee= f_0\circ g
$, where $f_0:=f|_{B_0}$ is the restriction of $f$ over $B_0$.
\end{proof}

\subsection{Dual singular fiber is not a Maurer-Cartan set}
\label{ss_singular_discussion}

It has been long expected that, at least set-theoretically, the mirror dual fiber of a Lagrangian fiber $L$ should be the set $\mathcal {MC}(L)$ of Maurer-Cartan solutions (also known as the bounding cochains) for an $A_\infty$ algebra associated to $L$.
This is mostly correct for the smooth fibers, although we need to be more careful for the analytic topology as indicated in Remark \ref{rigor_affinoid_viewpoint_chart_rmk}, \ref{rigor_affinoid_gluing_rmk}.
This is also a basic point of the original family Floer homology program.
Thus, it is natural for us to believe the `dual singular fibers' are also the corresponding Maurer-Cartan sets.

\textit{Nevertheless, our result responds negatively to this expected Maurer-Cartan picture.}

We have obtained the non-archimedean analytic fibration $f$ over $B$, extending the affinoid torus fibration $f_0\cong \pi_0^\vee$ tropically continuously. Besides, as explained in \S \ref{sss_dual_singular_intro}, its construction is compatible with the above Maurer-Cartan picture over $B_0$ \cite{Yuan_I_FamilyFloer} and is meanwhile backed up by lots of evidence \cite{GHK_birational,Abouzaid_Sylvan,AAK_blowup_toric,Gammage_localSYZ,AuTDual,Au_Special,KSTorus,KSAffine}.
Therefore, the $f$-fibers over the singular locus $\Delta=B\setminus B_0$ are basically the only reasonable candidates for the `dual singular fibers'.
But unfortunately, they are indeed larger than the Maurer-Cartan sets as shown in (\ref{MC_not_eq}) below.

Let's elaborate this as follows.
For clarity, let's assume $n=2$; then, $B=\mathbb R^2$ and the singular locus $\Delta$ consists of a single point $0=(0,0)$. 
Let's also forget about the analytic topology again and just look at the sets of closed points.
Notice that
$
f^{-1}(0)=F^{-1}(j(0))=F^{-1}(0,-\psi_0, \psi_0)
$
where $\psi_0:=\psi_0(0)>0$ is represented by the symplectic area of a holomorphic disk bounding the immersed Lagrangian $L_0=\pi^{-1}(0)$ (Figure \ref{figure_area_sing_fiber}, yellow). By definition, this consists of points $(x_0,x_1,y)$ in the variety $Y=\{x_0x_1=1+y\}$ in $\Lambda^2\times \Lambda^*$
so that $\val(y)=0$, $\min\{\val(x_0), -\psi_0\}=-\psi_0$, and $\min\{\val(x_1), \psi_0\}=\psi_0$. 
The last two conditions mean $\val(x_0)\ge -\psi_0$ and $\val(x_1)\ge \psi_0$.
We can take the coordinate change $z_0:=T^{\psi_0} x_0$, $z_1:=T^{-\psi_0}x_1$ so that $z_0z_1=x_0x_1$. Then, the dual singular fiber becomes
\[
\mathbf S:= f^{-1}(0)=\{ (z_0,z_1,y)\in Y \mid   \val(z_0)\ge 0, \ \val(z_1)\ge 0, \ \val(y)=0\}
\]
Recall that $\Lambda_0=\{z\in \Lambda\mid \val(z)\ge 0\}$ denotes the Novikov ring and $\Lambda_+ =\{z\in \Lambda\mid \val(z)>0\}$ is its maximal ideal. Recall also that $U_\Lambda=\{z\in\Lambda\mid \val(z)=0\}$, so the condition $\val(y)=0$ means $y\in U_\Lambda \equiv \mathbb C^*\oplus \Lambda_+$.
Next, we decompose $\mathbf S$ by considering the two cases of the variable $y\in U_\Lambda$:

\begin{itemize}
	\item If $y\in -1+\Lambda_+$, then $\val(z_0)+\val(z_1)=\val(1+y)>0$, and $(z_0,z_1)\in \Lambda_0\times \Lambda_+ \cup \Lambda_+\times \Lambda_0$. In turn, such a pair $(z_0,z_1)$ determines $y=-1+z_0z_1$ in $-1+\Lambda_+$.
\item If $y\notin -1+\Lambda_+$, then $0\le \val(z_0)+\val(z_1)=\val(1+y)=0$, so $z_0,z_1\in U_\Lambda$. In turn, for all these pairs $(z_0,z_1)$, the number $y=-1+z_0z_1$ can run over all the values in $\mathbb C^*\setminus \{ -1\} \oplus \Lambda_+$. If we write $\bar z_0,\bar z_1\in\mathbb C^*$ for the reductions of $z_0,z_1$ with respect to $U_\Lambda \twoheadrightarrow \mathbb C^*$, then this says $\bar z_0\bar z_1\neq 1$.
\end{itemize}
Therefore,
\[
\mathbf S=\mathbf S_1 \sqcup \mathbf S_2
\]
where $\mathbf S_1$ and $\mathbf S_2$ correspond to the above two bullets and are thus given by
\begin{align*}
\mathbf S_1 &\cong \Lambda_0\times \Lambda_+ \cup \Lambda_+\times \Lambda_0 \\
\mathbf S_2 & \cong \{ (z_0, z_1)\in  U_\Lambda\times U_\Lambda  \mid \bar z_0\bar z_1\neq 1\} \cong U_\Lambda\times \big(\mathbb C^*\setminus \{-1\} \oplus \Lambda_+\big)
\end{align*}

Going back to the A side, we can define the Maurer-Cartan set $\mathcal {MC}(L_0)$ by the Akaho-Joyce theory \cite{AJImm}.
Moreover, Hong, Kim, and Lau \cite[\S 3.2]{hong2018immersed} prove that we only have
\begin{equation}
	\label{MC_not_eq}
\mathcal {MC}(L_0)= \mathbf S_1  \ \ \subsetneq f^{-1}(0)
\end{equation}
(compare also many related papers \cite{cho2021noncommutative,cho2017localized,cho2018gluing}).
In conclusion, as the points in $\mathbf S_2$ are missed,
the Maurer-Cartan picture fails over the singular locus. 
One possibility is that we need additional `deformation data' for the conventional Maurer-Cartan sets.
Moreover, as discussed in \S \ref{sss_dual_singular_intro}, the non-archimedean analytic topology may be more relevant for the singular part.

The framework of Definition \ref{SYZmirror_defn} offers a preliminary attempt, and should be sufficient when the mirror space is expected to be algebraic rather than transcendental.
But admittedly, an ultimate answer would require further researches.

\section{Generalizations}
\label{s_generalization}

By combining the ideas in \S \ref{s_A_side}, \ref{s_B_side}, \ref{s_T_duality} with some basics of tropical and toric geometry, we can obtain quite a lot of generalizations with little change of ideas.

Let $N\cong \mathbb Z^n$ be a lattice and $M=\Hom(N,\mathbb Z)$.
Set $N_{\mathbb R}=N\otimes \mathbb R$ and $M_{\mathbb R}=M\otimes \mathbb R$.
Let $\Sigma\subset N_{\mathbb R}$ be a simplicial smooth fan with all maximal cones $n$-dimensional. Then, the primitive generators $v_1,\dots, v_n$ of rays in a maximal cone form a $\mathbb Z$-basis of $N$. Denote by $v_1^*,\dots, v_n^*$ the dual basis of $M$, so $M_{\mathbb R}\cong \mathbb R^n$.
Denote the remaining rays in $\Sigma$ by $v_{n+1},\dots, v_{n+r}$ for a fixed $r\ge 0$.

Suppose the toric variety $\mathcal X_\Sigma$ associated to $\Sigma$ is Calabi-Yau.
This means there exists $m_0\in M$ such that $\langle m_0, v\rangle =1$ for any generator $v$ of a ray in $\Sigma$. It follows that $m_0=v_1^*+\cdots +v_n^*=(1,\dots, 1)$. If we set
$
v_{n+a}=\sum_{j=1}^n k_{aj}v_j
$
for $k_{aj}\in\mathbb Z$, then we have $\sum_{j=1}^n k_{aj}=1$. Denote the corresponding toric divisors by $D_1,\dots, D_{n+r}$. Then,
$D_i+\sum_{a=1}^r k_{ai} D_{n+a}  = (\chi^{v_i^*}) \sim 0 $ for $1\le i\le n$ and $\sum_{j=1}^{n+r}D_j=(\chi^{m_0})\sim 0$.

\subsection{Lagrangian fibration}
Let $w=\chi^{m_0}$ be the character of $m_0$, and define $\mathscr D=w^{-1}(1)$.
We equip $\mathcal X_\Sigma$ with a toric K\"ahler form $\omega$, and the corresponding moment map $\mu: \mathcal X_\Sigma\to P$ is onto an unbounded polyhedral $P=P_\omega$ in $M_{\mathbb R}$ described by a set of inequalities of the form 
\begin{equation}
	\label{ell_i_half_spaces_eq}
\ell_i(m):=\langle m, v_i\rangle +\lambda_i \ge 0, \qquad 1\le i\le n+r, \quad  m\in M_{\mathbb R}
\end{equation}
for some $\lambda_i\in\mathbb R$.
The sublattice $\bar N:=\{n\in N\mid \langle m_0,n\rangle=0\}$ has a basis $\sigma_k=v_k-v_n$ for $1\le k<n$.
Consider its dual $\bar M:= \Hom(\bar N, \mathbb Z) \equiv M/\mathbb Zm_0$.
We identify
$\bar M_{\mathbb R}:=M_{\mathbb R}/\mathbb Rm_0$ with a copy of $\mathbb R^{n-1}$ in $M_{\mathbb R}\cong \mathbb R^n$ consisting of $(m_1,\dots, m_n)$ with $m_n=0$.
Then, the projection $p:M_{\mathbb R}\to \bar M_{\mathbb R}$ takes the form $(m_1,\dots, m_n)\mapsto (m_1-m_n, \dots, m_{n-1}-m_n)$.
We can also show that $p$ induces a homeomorphism from $\partial P$ to $\bar M_{\mathbb R}$. Note that $\bar \mu:= p\circ \mu$ is the moment map associated to the action of the subtorus $T_{\bar N} \equiv (\mathbb R/2\pi \mathbb Z)\cdot \{ \sigma_1,\dots, \sigma_{n-1}\}$. The critical points of $\bar \mu$ are the codimension-two toric strata in $X$, so the image $\Pi$ in $\bar M_{\mathbb R}$ of them is the union of $\Delta_{ij}:=p(P_{ij})$ for all $i\neq j$ where $P_{ij}:=\{\ell_i=\ell_j=0\}$.
By (\ref{ell_i_half_spaces_eq}), we can explicitly describe each $\Delta_{ij}$ in $\bar M_{\mathbb R}\cong \mathbb R^{n-1}$ with coordinates $\bar q=(q_1,\dots,q_{n-1})$:
\[
\begin{cases}
	\Delta_{ij} =\{   q_i+\lambda_i=q_j+\lambda_j\} 		
	& \text{if} \ 1\le i<j<n \\
	\Delta_{ij}=\{ q_i+\lambda_i = \lambda_n \} 
	& \text{if} \ 1\le i<j=n \\
	\Delta_{ij}=\{ 	q_i+\lambda_i= \sum_{s=1}^{n-1}k_{j-n,s} \ q_s +\lambda_j \}							
	& \text{if} \ 1\le i <n <j  \\
	\Delta_{ij}=\{ 	\lambda_n= \sum_{s=1}^{n-1}k_{j-n,s} \ q_s +\lambda_j \}							
	& \text{if} \  i =n <j  \\
	\Delta_{ij}=\{ 	 \sum_{s=1}^{n-1}k_{i-n,s} \ q_s +\lambda_i= \sum_{s=1}^{n-1}k_{j-n,s} \ q_s +\lambda_j \}							
	& \text{if} \  n< i <j  \\
\end{cases}
\]
It turns out that $\Pi\equiv \bigcup_{i<j}\Delta_{ij}$ is the tropical hypersurface in $\mathbb R^{n-1}$ associated to the tropical polynomial
\begin{equation}
	\label{tropical_polynomial_eq}
h_{\mathrm{trop}}(q_1,\dots, q_{n-1})= \min\Big\{ \lambda_n,  \{q_k+\lambda_k\}_{1\le k<n}, \{\textstyle \sum_{s=1}^{n-1}k_{as}q_s+\lambda_{n+a}\}_{1\le a\le r}
\Big\}
\end{equation}
We will realize it as the tropicalization of the Laurent polynomial $h$ in (\ref{tropical_original_h_eq}) later.

Define $X= \mathcal X_\Sigma \setminus \mathscr D$, and the \textit{Gross special Lagrangian fibration} \cite{Gross_ex} is given by
\begin{equation}
	\label{pi_fibration_generalized_eq}
\pi=(\bar\mu, \log |w-1| ):X\to B
\end{equation}
which maps onto $B:=\bar M_{\mathbb R}\times \mathbb R\cong \mathbb R^n$ for the above-mentioned identification.
The discriminant locus of $\pi$ is $\Delta=\Pi\times \{0\}$, and define $B_0:=B\setminus \Delta$.
Let $\hat H_i=p(\ell_i^{-1}(0)\cap P)\times \{0\}$, and $H_i:=\hat H_i\setminus \bigcup_j \Delta_{ij}$. The set $H=\bigcup_i H_i$ in $B_0$ is called the \textit{wall} in the sense that the Lagrangian fiber $L_q:=\pi^{-1}(q)$ bounds a nontrivial Maslov-0 holomorphic disk if and only if $q\in H$.

\begin{rmk}
Although the recent developments of the SYZ conjecture (e.g. \cite{collins2021special, collins2020syz,Li2019syz,evans2021constructing}) focus mainly on the existence of special Lagrangian fibration, their techniques should be very useful to find graded or zero Maslov class Lagrangian fibrations as well (cf. \cite{neves2007singularities}, \cite[Example 2.9]{SeidelGraded}).
The latter is easier to achieve and already sufficient to ensure the existence of the dual affinoid torus fibration $\pi_0^\vee$ \cite{Yuan_I_FamilyFloer}.
\end{rmk}

\subsection{Action coordinates}
Let $\mathscr N_i$ be a sufficiently small neighborhood of $H_i$ in $B_0$.
Let $B_\pm$ be the open subset of $B_0$ consisting of those points whose last coordinate is $>0$ or $<0$. Set $B'_\pm= B_\pm \cup\bigcup_i \mathscr N_i$.

The fiber $L_q$ for $q\in B_+$ is of Clifford type and can deform into a product torus in $(\mathbb C^*)^n\cong N\otimes \mathbb C^*$. So, there is a canonical isomorphism $\pi_1(L_q)\cong N$. Also, $\pi_2( \mathcal X_\Sigma, L_q)$ is naturally isomorphic to $\mathbb Z^{n+r}$ via $\beta \mapsto (\beta\cdot D_i)_{1\le i\le n+r}$, and let $\{\beta_i\}$ denote the corresponding basis of $\pi_2(\mathcal X_\Sigma, L_q)$.
Under these identifications, the boundary map $\partial: \pi_2( \mathcal X_\Sigma, L_q)\to \pi_1(L_q)$ now satisfies $\partial\beta_i=v_i$ for $1\le i\le n+r$.
There is an exact sequence $0\to \pi_2(\mathcal X_\Sigma)\to \pi_2(\mathcal X_\Sigma, L_q)\to \pi_1(L_q)\to 0$, so we may choose a basis $\{\mathcal S_a\}_{0\le a\le r} $ in $\pi_2(\mathcal X_\Sigma)$ such that
$
\mathcal S_a=\beta_{n+a} -(k_{a1}\beta_1+\cdots +k_{an}\beta_n)
$.
It is a standard result (see e.g. \cite{Guillemin1994kaehler}) that 
$
\frac{1}{2\pi} [\omega]=\sum_{i=1}^{n+r} \lambda_i \cdot \mathrm{PD}(D_i)
$.
Hence,
\begin{equation}
	\label{energy_H_a_eq}
	E(\mathcal S_a)=\lambda_{n+a}-\sum_{j=1}^n k_{aj}\lambda_j
\end{equation}
Just like \S \ref{ss_action_coordinates_-}, we use the natural action of $T_{\bar N}=\bar N\otimes \mathbb R/2\pi \mathbb Z$ and $\partial\beta_n$ to determine a local chart $\chi_+(q)=(q_1,\dots, q_{n-1}, \psi_+(q))$ of action coordinates over $B'_+$. Changing it by constants if necessary, we may assume $q_k+\lambda_k-\lambda_n=E(\beta_k)-E(\beta_n)$ for $1\le k<n$ and $\psi_+(q)+\lambda_n=E(\beta_n)$.

On the other hand, the fiber $L_q$ for $q\in B_-$ is of Chekanov type, and $|w-1|<1$ over here. As before, there is a natural topological disk $\hat\beta$ in $\pi_2(\mathcal X_\Sigma, L_q)$ that is a section of $w$ over a disk centered at $1\in\mathbb C$.
Similarly, we use the $T_{\bar N}$-action and $\partial\hat\beta$ to determine a local chart $\chi_-(q)=(q_1,\dots, q_{n-1}, \psi_-(q))$ of action coordinates over $B'_-$. We may assume $\psi_-(q)=E(\hat\beta)$ up to a constant change.
As before, we may extend $\hat\beta$ over $B'_-$ and check $\hat\beta|_{\mathscr N_i}$ equals $\beta_i$ for $1\le i\le n+r$. 
Besides, similar to (\ref{psi_+_psi_-_min_eq}), we can show that
\begin{equation*}
	\psi_-(q)=
	\begin{cases}
		\psi_+(q)+q_k+\lambda_k 
		& \text{if \ } q\in \mathscr N_k, \ 1\le k <n \\
		\psi_+(q)+\lambda_n
		&\text{if \ } q\in \mathscr N_n, \\
		\psi_+(q) +\sum_{s=1}^{n-1}k_{as} q_s +\lambda_{n+a}
		&\text{if \ } q\in\mathscr N_{n+a}, \ 1\le a \le r
	\end{cases}
\end{equation*}
In other words,
\begin{equation*}
	\psi_-(q)=\psi_+(q)+h_{\mathrm{trop}}( \bar q)
\end{equation*}
on their overlapping domains $\bigsqcup_i\mathscr N_i$. Similar to (\ref{psi_global_in_B_eq}), we obtain a continuous map $\psi: B\to \mathbb R$ such that $\psi|_{B'_-}=\psi_-$ and $\psi|_{B'_+}=\psi_+(q)+h_{\mathrm{trop}}(\bar q)$.
Set $\psi_0(\bar q) := \psi(\bar q,0)$.

Just as \S \ref{ss_j_homeomorphism}, we define a topological embedding $j:B\to \mathbb R$ by $q\mapsto (\theta_0(q),\theta_1(q), \bar q)$ where
\begin{equation*}
	\label{theta_0_1_generalized_eq}
	\begin{aligned}
		\theta_0(q_1,\dots, q_n)
		&:=
		\min\{ -\psi(q) , -\psi_0( \bar q ) \}  + h_{\mathrm{trop}}(\bar q)
		\\
		\theta_1(q_1,\dots, q_n) 
		&:=
		\min\{\ \ \ \psi(q) ,  \ \ \ \psi_0( \bar q )\}
	\end{aligned}
\end{equation*}
The image $j(B)$ can be described as in \S \ref{sss_describe_j}: let $R_{\bar q}\subset \mathbb R^2$ be the half broken line $r_{\bar q}$ in (\ref{jmath_eq}) with $h_{\mathrm{trop}}$ in (\ref{tropical_polynomial_eq}) replacing $\min\{0,\bar q\}$, and then the image $j(B)$ is the union of all $R_{\bar q}\times \{\bar q\}$ in $\mathbb R^{n+1}$ like (\ref{j(B)_explicit}).

\subsection{Mirror analytic structure}
According to Theorem \ref{Main_theorem_thesis_thm}, we can build up an analytic space $X_0^\vee$, which is set-theoretically $\bigcup_{q\in B_0} H^1(L_q; U_\Lambda)$, and the natural map $\pi_0^\vee:X_0^\vee \to B_0$ becomes an affinoid torus fibration.
By the same formulas as (\ref{tau_+_eq}, \ref{tau_-_eq}), the above two integral affine charts $\chi_\pm$ give rise to two affinoid tropical charts 
\[
\tau_\pm: (\pi_0^\vee)^{-1}(B'_\pm) \xrightarrow{\cong} T_\pm\subset (\Lambda^*)^n
\]
and their images are 
$
T_\pm := \trop^{-1}(\chi_\pm(B'_\pm)) \subsetneq (\Lambda^*)^n
$ just like (\ref{T_+-_eq}).
The two charts $\tau_\pm$ are glued by several automorphisms $\phi_i$ on $(\pi_0^\vee)^{-1}(\mathscr N_i)$ as (\ref{phi_ell_eq}); equivalently, we use $\Phi_i:=\tau_-\circ \phi_i\circ \tau_+^{-1}: T_+^i \to T_-^i$ to glue the analytic subdomains $T_\pm^i\equiv \trop^{-1}(\chi_\pm(\mathscr N_i))$ in $T_\pm$ (\ref{Phi_ell_eq}).
Roughly, placing the $\pi$ in the larger ambient manifold $\mathcal X_\Sigma$, one can check the Maslov-0 holomorphic disks keep the same. Thus, the analytic space $X_0^\vee$ and the affinoid torus fibration $\pi_0^\vee$ are unchanged.
In particular, the gluing maps $\phi_i$ and $\Phi_i$ are also unchanged.
On the other hand, there are new Maslov-2 disks that contribute to two analytic functions $W_\pm$ on the two affinoid tropical charts $(\pi_0^\vee)^{-1}(B'_\pm)\cong T_\pm$ (\ref{W_C_eq}).
By maximal principle, one can show that $W_-(\mathbf y)=T^{\psi_-(q)} \mathbf y^{\partial\hat\beta}$ for $\mathbf y\in H^1(L_q;U_\Lambda)$ with $q\in B'_-$.
Hence, 
$
\mathcal W_-(y):=W_-(\tau_-^{-1}(y))=y_n
$
for $y=(y_1,\dots, y_n)\in T_-$.
It follows from \cite{Cho_Oh} that
$
W_+(\mathbf y)= \sum_{i=1}^{n+r} T^{E(\beta_i)} \mathbf y^{\partial\beta_i} 
(1+\delta_i)
$
where 
\begin{equation}
	\label{delta_i_in_Lambda_+_eq}
\delta_i:=\sum_{\alpha\in H_2(\mathcal X_\Sigma) \setminus\{0\}} T^{E(\alpha)} \mathsf n_{\beta_i+\alpha}\in \Lambda_+
\end{equation}
and $\mathsf n_\beta$ is the count of holomorphic stable disks of class $\beta$.
Unlike the previous case when $\mathcal X_\Sigma=\mathbb C^n$, the Cho-Oh's result is not strong enough to determine the coefficients $\delta_i$'s in general, as the sphere bubbles can contribute if the corresponding toric divisor is compact \cite[Proposition 5.3]{CLL12}.
But, if $D_i$ is non-compact, then we can use the maximal principle to prove that $\delta_i=0$.
Anyway, regarding the chart $\tau_+$, we can check that
$
\mathcal W_+(y):=W_+(\tau_+^{-1}(y))
=
y_n\cdot h(y_1,\dots, y_{n-1})
$
where $y=(y_1,\dots, y_n) \in T_+$ and
\begin{equation}
	\label{tropical_original_h_eq}
h(y_1,\dots, y_{n-1})=
T^{\lambda_n}(1+\delta_n) + \sum_{s=1}^{n-1}T^{\lambda_s} y_s (1+\delta_s)  + \sum_a T^{\lambda_{n+a}} (1+\delta_{n+a}) \prod_{s=1}^{n-1} y_s^{k_{as}}  
\end{equation}
Remark that this more or less agrees with many previous works (e.g. \cite{AuTDual} \cite{Au_Special} \cite{CLL12}).
However, working over the non-archimedean field $\lambda$ is very crucial for the following observation: the tropicalization of $h$ is precisely the tropical polynomial (\ref{tropical_polynomial_eq}). This picture would be totally missed over $\mathbb C$; c.f. \S \ref{sss_ex_SYZ_converse_intro}.

Due to (\ref{superpotential_gluing_map_eq}), we have $\mathcal W_-(\Phi_i(y))=\mathcal W_+(y)$ for any $y\in T_+^i$. Besides, as before, the $T_{\bar N}$-symmetry of the Lagrangian fibration $\pi$ implies that $\Phi_i$ preserves the first $n-1$ coordinates (\ref{Phi_ell_explicit_eq}). Finally, just like (\ref{Phi_ell_explicit_determined_eq}), we can show that
\[
\Phi_i(y_1,\dots, y_n)=\big(y_1,\dots, y_{n-1}, y_n h(y_1,\dots, y_{n-1}) \big)
\]
for any $y=(y_1,\dots, y_n)\in T_+^i$ and $1\le i\le n+r$.
In the same way as (\ref{identification_Floer_use_eq}), we have an identification $X_0^\vee \cong T_+\sqcup T_-/\sim$ for a similar gluing relation defined by the above $\Phi_i$'s. There is also a similar characterization of $\pi_0^\vee$ as (\ref{identification_pi_check_property_eq}).
\textit{Note that the viewpoint of Remark \ref{floer_omit_rmk} still works here.}

Just as \S \ref{ss_g_analytic_embedding}, we obtain an embedding $g$ from $X_0^\vee $ into (the analytification of) the algebraic $\Lambda$-variety:
\[
Y:=\left\{ (x_0,x_1, y_1,\dots, y_{n-1})\in \Lambda^2\times (\Lambda^*)^{n-1} \mid x_0x_1= h(y_1,\dots, y_{n-1}) \right\}
\]

\subsection{Dual analytic fibration}

Given $z=(x_0,x_1,y_1,\dots, y_{n-1})$ in $\Lambda^2\times (\Lambda^*)^{n-1}$, we define:
\[
\begin{aligned}
	F_0(z)&=\min\{  
	\val(x_0),   -\psi_0(\val(y_1),\dots, \val(y_{n-1}) )+h_{\mathrm{trop}}(\mathsf v(y_1),\dots, \mathsf v(y_{n-1}))  \} \\
	F_1(z)&= \min \{ \val(x_1), \ \ \ \psi_0(\val(y_1),\dots, \val(y_{n-1})) \} \\
	G_k(z)&=\val(y_k) \qquad \text{for} \  1\le k< n
\end{aligned}
\]
This is only a slight modification of (\ref{F_KS_eq}) with $h_{\mathrm{trop}}$ in (\ref{tropical_polynomial_eq}) replacing $\min\{0,q_1,\dots, q_{n-1}\}$.
Now, we define $F:=(F_0,F_1, G_1,\dots, G_{n-1}): Y\to\mathbb R^{n+1}$.
One can similarly describe the image of $F$ as before in \S \ref{sss_describe_F}.
Roughly, the image $\mathfrak B=F(Y)$ also takes the form of (\ref{F(Y)_mathfrak_B_eq}) as is the union of all $S_{\bar q}\times \{\bar q\}$ in $\mathbb R^{n+1}$, but the broken line $s_{\bar q}$ in (\ref{broken_line_s_barq_eq}) is modified by replacing $\min\{0,\bar q\}$ by $h_{\mathrm{trop}}$.
The image $j(B)$ then agrees with the open subset $\hat B=\{(u_0,u_1,\bar q)\in\mathfrak B\mid u_1>0\}$, and $j$ exactly sends the singular locus $\Delta$ of $\pi$ to the singular locus of $F$ as in Lemma \ref{j(B)_lem}.
Just like \S \ref{ss_defn_f_B_fibration}, we define $\mathscr Y=F^{-1}(\hat B)$ and $f=j^{-1}\circ F:\mathscr Y\to B$; also, the restriction $f_0:=f|_{B_0}$ gives an affinoid torus fibration.
In the end, the proof of Theorem \ref{fibration_preserving_thm} can be repeated verbatim here, obtaining
$F\circ g=j\circ \pi_0^\vee$ and thus $\pi_0^\vee=f_0\circ g$.
This completes the proof of Theorem \ref{Main_this_paper_general}.

\appendix
\section{Folklore conjecture for the critical values of Landau-Ginzburg models}
\label{s_folklore}

In this appendix, we check some computations for the well-known folklore Conjecture \ref{conjecture_folklore} as mentioned in \S \ref{ss_folklore_introduction}.
A conceptual proof of the folklore conjecture is also given in \cite{Yuan_c_1} with much generalities.
We recommend a brief reading of \S \ref{s_family_review} in advance.

\subsection{General aspects}
\label{ss_log_deri}
Let $(X_0^\vee,\pi_0^\vee, W_0^\vee)$ be given in Theorem \ref{Main_theorem_thesis_thm}, and we often omit the subscript $0$ if there is no confusion.
By Remark \ref{rigor_superpotential_rmk}, the superpotential should be (locally) viewed as a formal power series in $\Lambda[[\pi_1(L_{q_0})]]$ for some base point ${q_0}$.
In general, let's take an arbitrary formal power series $F=\sum_{j=1}^\infty c_jY^{\alpha_j}$ in $\Lambda[[\pi_1(L_{q_0})]]$ for $c_j\in \Lambda$ and $\alpha_j\in \pi_1(L_{q_0})$. Given $\theta\in H^1(L_{q_0})\cong \Hom (\pi_1(L_{q_0}),\mathbb R)$, we define the \textbf{logarithmic derivative} along $\theta$ of $F$ by
\[
\textstyle
D_\theta F=\sum_{j=1}^\infty c_j \langle \alpha_j, \theta\rangle Y^{\alpha_j}
\]
By \S \ref{ss_FF_local_affinoid_tropical_chart}, we take a local affinoid tropical chart of the affinoid torus fibration $\pi_0^\vee$:
\[
\tau: (\pi_0^\vee)^{-1}(U)\cong \trop^{-1}(V-c)
\]
that covers a pointed integral affine chart $\chi:(U,q_0)\to (V,c)$.
By \S \ref{ss_review_superpotential}, the superpotential $W^\vee$ in this chart is given by:
\[
W^\vee|_\tau(\mathbf y)=\sum_{\beta\in \pi_2(X,L_q) , \ \mu(\beta)=2} T^{E(\beta)} \mathbf y^{\partial\beta} \mathsf n_{\beta(q_0)}
\]
for $\mathbf y\in H^1(L_q; U_\Lambda)$ with $q\in U$.
Its logarithmic derivative along $\theta=\theta_q\in H^1(L_q; \mathbb R)$ is given by
\[
D_\theta W^\vee|_\tau (\mathbf y)=\sum_{\mu(\beta)=2} \langle \partial\beta, \theta\rangle T^{E(\beta)} \mathbf y^{\partial\beta} \mathsf n_{\beta(q_0)}
\]

\begin{defn}
	\label{critical_points_defn}
	A point $\mathbf y$ in $H^1(L_q; U_\Lambda)\subset X_0^\vee$ is called a \textbf{critical point} of $W^\vee$ if $D_\theta W^\vee|_\tau (\mathbf y)=0$ for all $\theta$ and for some affinoid tropical chart $\tau$.
\end{defn}

This definition does not depend on the choice of the affinoid tropical chart $\tau$ as proved in \cite{Yuan_c_1}. So, we often omit writing the $\tau$ in the notations.
In our situation, we just have two affinoid tropical charts:
\[
\tau_\pm: (\pi_0^\vee)^{-1}(B'_\pm) \xrightarrow{\cong} T_\pm \subset (\Lambda^*)^n
\]
as in (\ref{tau_+_eq}, \ref{tau_-_eq}).
We will use $\overline X$ to denote a compactification of $X$ and use $W^\vee$ to denote the consequent LG superpotential. Denote by $W^\vee_{\pm}:=W^\vee_{\overline X, \pm}$ the restrictions of $W^\vee$ on $(\pi_0^\vee)^{-1}(B'_\pm)$.

The superpotential on the Clifford chamber $T_+$ is usually easy to find by \cite{Cho_Oh}.
The superpotential on the Chekanov chamber $T_-$ is hard to compute by the classic ideas. But, it can be now computed by the superpotential-preserving property (\ref{superpotential_gluing_map_eq}) of the family Floer gluing maps as did in \cite{Yuan_e.g._FamilyFloer}. Another approach over $\mathbb C$ using the Lagrangian mutations has been studied in \cite{PT_mutation}.
However, the computations in \cite{Yuan_e.g._FamilyFloer} over $\Lambda$ rather than over $\mathbb C$ is crucial to check the folklore conjecture.

\subsection{Examples}
For clarity, we only study the case $X=\mathbb C^n\setminus \mathscr D$ as in Theorem \ref{Main_this_paper_fundamental_example}. But, one may obtain many other computations in the case of Theorem \ref{Main_this_paper_general} using similar arguments.

\subsubsection{}
Assume $\overline X=\mathbb {CP}^n$.
There is a topological disk $\beta'\equiv \beta'(q) \in \pi_2(\mathbb {CP}^n, L_q)$ that intersects the divisor $\mathbb {CP}^n\setminus \mathbb C^n$ once and satisfies
\begin{equation}
	\label{beta'_relation}
	\beta'=\mathcal H-\beta_1-\cdots -\beta_n
\end{equation}
where $\mathcal H$ is the complex line that generates $\pi_2(\mathbb {CP}^n)$.
We also view $\beta'$ as a local section of $\mathscr R_2$ in (\ref{local_system_H_2_H_1_eq}) over $B'_+$.
Adding an extra subscript to distinguish, we use $W^\vee_{\mathbb C^n,\pm }$ to denote the superpotentials obtained before in (\ref{W_C_eq}). 
First, by \cite{Cho_Oh}, the superpotential over $B'_+$ is easy to find:
\begin{equation*}
	\begin{aligned}
		W^\vee_{+} (\mathbf y) 
		&= T^{E(\beta')} \mathbf y^{\partial\beta'} \mathsf n_{\beta'} + \sum_{j=1}^n T^{E(\beta_j)} \mathbf y^{\partial\beta_j} \mathsf n_{\beta_j} = T^{E(\beta')} \mathbf y^{\partial\beta'} + W_{\mathbb C^n,+}^\vee (\mathbf y) \\
		&=
		T^{E(\mathcal H) -q_1-\cdots -q_{n-1}-n\psi_+(q)} \mathbf y^{-n\partial\beta_n-(\sigma_1+\cdots +\sigma_{n-1})}
		+
		T^{\psi_+(q)} \mathbf y^{\partial\beta_n} \big(1+ \sum_{k\neq n} T^{q_k} \mathbf y^{\sigma_k}\big)
	\end{aligned}
\end{equation*}
We can also use \cite{Cho_Oh} again to conclude $\mathsf n_{\beta'}=1$.
By \cite{Yuan_e.g._FamilyFloer}, the superpotential over $B'_-$ is
\begin{equation}
	\label{W_P_+_eq}
	W^\vee_{-}( {\mathbf y})
	=
	T^{\psi_-}  {\mathbf y}^{\partial\hat\beta}
	+
	T^{-n\psi_-+E(\mathcal H)}   {\mathbf y}^{-n\partial\hat\beta-(\sigma_1+\cdots+\sigma_{n-1})}\big(1+T^{q_1}   {\mathbf y}^{\sigma_1}+\cdots+ T^{q_{n-1}}  {\mathbf y}^{\sigma_{n-1}} \big)^n
\end{equation} 

Under the identifications $(\pi_0^\vee)^{-1}(B'_\pm)\cong T_\pm$ via $\tau_\pm$, they have the following expressions:
For $y=(y_1,\dots, y_n)$ in $T_+$ or $T_-$, we respectively have
\begin{equation}
	\label{W_CP_explicit}
	\begin{aligned}
	W^\vee_+ (y) &= y_n(1+y_1+\cdots+y_{n-1}) +  \frac{T^{E(\mathcal H)}}{y_1\cdots y_{n-1} \cdot y_n^n}
		\\
	W^\vee_- (y) &= y_n + \frac{T^{E(\mathcal H)}(1+y_1+\cdots + y_{n-1})^n}{y_1\cdots y_{n-1}\cdot y_n^n} \
	\end{aligned}
\end{equation}
Moreover, under the analytic embedding $g$ in (\ref{g_analytic_map_eq}) into the $\Lambda$-variety 
\[
Y=\{ \pmb z=(x_0,x_1,y_1,\dots, y_{n-1})  \in \Lambda^2\times (\Lambda^*)^{n-1} \mid  x_0x_1=1+y_1+\cdots +y_{n-1} \}
\]
we can check that the $W_\pm^\vee$ together give rise to a single expression
\begin{equation}
	\label{W_single_CPn}
	W^\vee (\pmb z) = x_1+\frac{T^{E(\mathcal H)} \cdot x_0^n}{ y_1\cdots y_{n-1}}
\end{equation}
It is first only defined on a subdomain $\mathscr Y_0$ in $Y$; but thanks to Theorem \ref{Main_this_paper_fundamental_example}, its domain can be extended to the whole $Y$.

Next, we aim to find the critical points of $W^\vee$.
The methods are not unique. For instance, we may use various \textit{tropical charts} of $Y$ in the sense of \cite[\S 3]{Formes_Chambert_2012} (c.f. the recent \cite{gubler2021forms}). The critical points are actually coordinate-free.
Anyway, for clarity, let's use the two familiar charts $\tau_\pm$.

\begin{itemize} 
	\item In the affinoid tropical chart $\tau_+$, we have
	\begin{align*}
		D_k W^\vee_+ 
		&= - \frac{T^{E(\mathcal H)} }{y_1\cdots y_{n-1} y_n^n} + y_ny_k  && 1\le k <n \\
		D_n W^\vee_+ 
		&= -\frac{n \ T^{E(\mathcal H)}}{y_1\cdots y_{n-1} y_n^n} + y_n(1+y_1+\cdots +y_{n-1})
	\end{align*}
	for $y=(y_1,\dots, y_n)\in T_+$. Then, we can solve and obtain the critical points (if exist in $T_+$):
	\[
	\begin{cases}
		y_k=1 & 1\le k<n \\
		y_n= T^{\frac{1}{n+1} E(\mathcal H)}  e^{\frac{2\pi i s}{n+1}} & s\in \{0,1,\dots, n\}
	\end{cases}
	\]
	The corresponding critical values are $(n+1)T^{\frac{1}{n+1} E(\mathcal H)}  e^{\frac{2\pi i s}{n+1}}$.

	\item 
In the affinoid tropical chart $\tau_-$, we also have
	\begin{align*}
		D_k W_-^\vee 
		&=
		-\frac{T^{E(\mathcal H)} (1+y_1+\cdots +y_{n-1})^n }{y_1\cdots y_{n-1} y_n^n} 
		+
		n \frac{T^{E(\mathcal H)} (1+y_1+\cdots +y_{n-1})^{n-1} y_k}{y_1\cdots y_{n-1}y_n^n} \\
		D_n W_-^\vee
		&=
		y_n - \frac{n T^{E(\mathcal H)} (1+y_1+\cdots +y_{n-1})^n}{y_1\cdots y_{n-1}y_n^n}
	\end{align*}
	for $y=(y_1,\dots, y_n)\in T_-$. Then, we can solve and obtain the critical points (if any):
	\[
	\begin{cases}
		y_k=1 & 1\le k < n \\
		y_n = nT^{\frac{1}{n+1} E(\mathcal H)} e^{\frac{2\pi i s}{n+1}} & s\in\{0,1,\dots, n\}
	\end{cases}
	\]
	The corresponding critical values are also $(n+1)T^{\frac{1}{n+1} E(\mathcal H)}  e^{\frac{2\pi i s}{n+1}}$.
	
\end{itemize}

In either case, the critical values agree with the known eigenvalues of the quantum product in $\mathbb {CP}^n$ of the first Chern class.
Under the analytic embedding $g$, the critical points in either bullets have the uniform expressions as follows:
\[
\pmb z^{(s)} =(x_0,x_1,y_1,\dots, y_{n-1}) \ \in Y: \qquad 
\begin{cases}
	x_0 =  T^{-\frac{E(\mathcal H)}{n+1}} e^{-\frac{2\pi i s}{n+1}}	&		\\
	x_1 = 	n T^{\frac{E(\mathcal H)}{n+1}	} e^{\frac{2\pi i s}{n+1}} & 	\\
	y_k =1 & 1\le k<n
\end{cases}
\]
for $s\in\{0,1,\dots, n\}$.
Since $\val(y_k)=0$, the critical points are all contained in the dual fiber over a fixed single point $\hat q=(0,\dots, 0, a_\omega)$ in $B\equiv \mathbb R^n$ for some $a_\omega\in\mathbb R$, and $\val(x_1)=\psi(\hat q)=\frac{1}{n+1}E(\mathcal H)=\frac{1}{2\pi(n+1)}\omega(\mathcal H)$.
In particular, the K\"ahler form $\omega$ determines $a_\omega$ and the base point $\hat q$. Besides, the critical points are in $T_+$ if $a_\omega>0$ and in $T_-$ if $a_\omega<0$. But, it is not completely clear when $a_\omega=0$ and $\hat q$ lies on the singular locus.

\subsubsection{}
Assume $\overline X=\mathbb {CP}^m\times \mathbb {CP}^{n-m}$ for $1\le m <n$. Let $\mathcal H_1$ (resp. $\mathcal H_2$) be the class of a complex line in $\mathbb {CP}^m$ (resp. $\mathbb {CP}^{n-m}$).
There are two new disk classes $\beta'_1$ and $\beta'_2$ bounding the $\pi$-fibers over $B_+$.
Besides, $\mathcal H_1=\beta'_1+\sum_{i=1}^m \beta_i$ and $\mathcal H_2=\beta'_2+\sum_{i=m+1}^n \beta_i$.
One can finally show that the superpotential under the analytic embedding $g$ is given by
\[
W^\vee (\pmb z)
=
x_1 +  \frac{T^{E(\mathcal H_1)} x_0^m}{y_1\cdots y_m} +  \frac{T^{E(\mathcal H_2)} x_0^{n-m}}{ y_{m+1}\cdots y_{n-1}}
\]
for $\pmb z=(x_0,x_1,y_1,\dots, y_{n-1})$ on the same variety $Y$. We have $(m+1)(n-m+1)$ critical points
\[
\begin{cases}
	x_0 =	\big( T^{\frac{E(\mathcal H_2)}{n-m+1}} e^{\frac{2\pi i s}{n-m+1}} \big)^{-1}	& \\
	x_1 =	T^{\frac{E(\mathcal H_2)}{n-m+1}} e^{\frac{2\pi i s}{n-m+1}}  \cdot \left(m \  T^{\frac{E(\mathcal H_1)}{m+1}}  e^{\frac{2\pi i r}{m+1}} \cdot \big( T^{\frac{E(\mathcal H_2)}{n-m+1}} e^{\frac{2\pi i s}{n-m+1}} \big)^{-1}+ n-m\right) & \\
	y_k = T^{\frac{E(\mathcal H_1)}{m+1}}  e^{\frac{2\pi i r}{m+1}} \cdot \big( T^{\frac{E(\mathcal H_2)}{n-m+1}} e^{\frac{2\pi i s}{n-m+1}} \big)^{-1} & 1\le k\le m \\
	y_\ell= 1 & m<  \ell <n 
\end{cases}
\]
for $r\in\{0,1,\dots, m\}$ and $s\in\{0,1,\dots, n-m\}$.
Then, the corresponding critical values are
\[
(m+1) T^{\frac{E(\mathcal H_1)}{m+1}} e^{\frac{2\pi i r}{m+1}} + (n-m+1) T^{\frac{E(\mathcal H_2)}{n-m+1}} e^{\frac{2\pi i s}{n-m+1}} 
\]

Let's further look into a special case when $m=1$ and $n=2$. Namely, $\overline X=\mathbb {CP}^1\times \mathbb {CP}^1$. Then,
\[
W^\vee=x_1+ \frac{T^{E(\mathcal H_1)}x_0}{y_1}+{T^{E(\mathcal H_2)} x_0}
\]
By the above computation, we have four critical points given by
\[
\begin{cases}
	x_0=\big( T^{\frac{E(\mathcal H_2)}{2}}  e^{\pi i s}	\big)^{-1} \\
	x_1=  T^{\frac{E(\mathcal H_2)}{2}} e^{\pi i s} \big(
	T^{\frac{E(\mathcal H_1)-E(\mathcal H_2)}{2}} e^{\pi i s} e^{\pi i r} +1
	\big) \\
	y_1=T^{\frac{E(\mathcal H_1)-E(\mathcal H_2)}{2}} e^{\pi i r} e^{\pi i s}
\end{cases}
\]
for $r,s\in\{0,1\}$.
The base point in $B\equiv \mathbb R^2$ is given by
$
\hat q=\left( \frac{E(\mathcal H_1)-E(\mathcal H_2)}{2}, a_\omega\right)
$
for some $a_\omega\in\mathbb R$.
More examples can be similarly found out by either choosing a different compactification $\overline X$ or working with a more general $X$ as Theorem \ref{Main_this_paper_general}; compare also \cite{Yuan_e.g._FamilyFloer}.


\bibliographystyle{abbrv}

\bibliography{mybib_local_SYZ}

\end{document}